\newcommand{\rmd}{\mathrm{d}}
\newcommand{\rmC}{\mathrm{C}}
\newcommand{\rmD}{\mathrm{D}}
\newcommand{\rmH}{\mathrm{H}}
\newcommand{\rmL}{\mathrm{L}}
\newcommand{\rmP}{\mathrm{P}}
\newcommand{\sfd}{\mathsf{d}}
\newcommand{\bbR}{\mathbb{R}}
\newcommand{\bbV}{\mathbb{V}}
\newcommand{\bbW}{\mathbb{W}}
\newcommand{\bbZ}{\mathbb{Z}}
\newcommand{\frp}{\mathfrak{p}}
\newcommand{\calC}{\mathcal{C}}
\newcommand{\calF}{\mathcal{F}}
\newcommand{\calL}{\mathcal{L}}
\newcommand{\calR}{\mathcal{R}}
\newcommand{\calS}{\mathcal{S}}
\newcommand{\calT}{\mathcal{T}}
\newcommand{\id}{\mathrm{id}}
\DeclareMathOperator{\Div}{div}
\renewcommand{\div}{\Div}
\DeclareMathOperator{\grad}{grad}
\DeclareMathOperator{\curl}{curl}
\DeclareMathOperator{\vect}{vect}
\DeclareMathOperator{\myspan}{span}
\newcommand{\ts}{\textstyle}
\newcommand{\bs}{{\scriptscriptstyle \bullet}}
\newcommand{\whitney}{\mathfrak{W}}
\newcommand{\diff}{\mathrm{d}}
\newcommand{\rest}{\mathrm{r}}
\newcommand{\eval}{\mathrm{e}}
\newcommand{\const}{\mathrm{c}}
\DeclareMathOperator{\orient}{\mathrm{o}}
\DeclareMathOperator{\eder}{\rmd}
\DeclareMathOperator{\pull}{\mathrm{pu}}
\DeclareMathOperator{\trace}{\mathrm{tr}}
\DeclareMathOperator{\ext}{\mathsf{ext}}
\renewcommand{\diff}{\mathsf{d}}
\renewcommand{\rest}{\mathsf{r}}
\renewcommand{\eval}{\mathsf{e}}
\renewcommand{\const}{\mathsf{c}}
\DeclareMathOperator{\poincare}{\frp}
\DeclareMathOperator{\koszul}{\kappa}
\DeclareMathOperator{\lie}{\calL}
\newcommand{\ula}[1]{\underleftarrow{#1}}
\newcommand{\subcell}{\unlhd} 
\newcommand{\subcellstrict}{\lhd}
\newcommand{\subcells}{\calS}
\newcommand{\ctr}{ \, \mathsf{L} \, }
\newcommand{\poly}{\rmP}
\newcommand{\alter}{\mathrm{Alt}}
\newcommand{\refi}{\calR}
\newcommand{\difo}{\Lambda}
\newcommand{\beq}{\begin{equation}}
\newcommand{\eeq}{\end{equation}}
\newcommand{\mapping}[4]{
\left\{
\begin{array}{rcl}
\displaystyle #1  &\to& #2 ,\\
\displaystyle #3  &\mapsto & #4.
\end{array} \right.
}
\definecolor{mygray}{gray}{0.75}
\definecolor{myyellow}{rgb}{0.9, 0.8,0.1}
\definecolor{myred}{rgb}{0.7, 0.15, 0.15}
\definecolor{myblue}{rgb}{0.15, 0.15, 0.7}
\newcounter{npoint}[section]
\newcommand{\case}[1]{
\bigskip
\noindent -- \emph{#1}
}
\newtheorem{theorem}{Theorem}[section]
\newtheorem{lemma}[theorem]{Lemma}
\newtheorem{proposition}[theorem]{Proposition}
\theoremstyle{definition}
\newtheorem{definition}{Definition}[section]
\theoremstyle{remark}
\newtheorem{remark}{Remark}[section]
\newcounter{quest}[exercise]
\newcounter{subquest}[quest]
\newcounter{pushlevel}[theorem]
\newcommand{\push}{
\stepcounter{pushlevel}
 \vspace{-1ex} \noindent \hspace{4.5ex} \begin{minipage}[t]{\textwidth*\real{0.99} - 4.5ex}
\mbox{}\hspace{-1ex}\rule[-1.5ex]{.1pt}{1.5ex}\rule{5.3ex}{.1pt}
\vspace{-.65ex}

}
\newcommand{\unpush}{
\end{minipage}
\noindent \mbox{}\hspace{4.2ex}\rule{.1pt}{1.5ex}\rule{5.3ex}{.1pt} \hspace{\stretch{1}}
\noindent
\addtocounter{pushlevel}{-1}

}
\newcounter{theopoint}[theorem]
\newcommand{\tpoint}{
\medskip
\stepcounter{theopoint}
\noindent (\roman{theopoint}) 
}
\theoremstyle{plain}
\theoremstyle{definition}
\theoremstyle{remark}
\newcounter{teopunkt}[teorem]
\newcounter{osp}[oppgave]
\definecolor{lightorange}{rgb}{0.91, 0.84, 0.42}
\definecolor{deepcarrotorange}{rgb}{0.91, 0.41, 0.17}
\definecolor{deepsaffron}{rgb}{1.0, 0.6, 0.2}
\definecolor{gold}{rgb}{0.91, 0.84, 0.17}
\definecolor{red}{rgb}{0.8,0,0}
\definecolor{darkorange}{rgb}{1,0.4,0}
\definecolor{lightorange}{rgb}{1,0.6, 0}
\definecolor{yellow}{rgb}{1,0.8, 0}
\title{
Generalized Finite Element Systems\\
for smooth differential forms and Stokes' problem
}
\author{Snorre H. Christiansen\thanks{Department of Mathematics, University of Oslo, PO Box 1053 Blindern, NO 0316 Oslo, Norway. email: {\tt snorrec@math.uio.no.}}, Kaibo Hu\thanks{Department of Mathematics, University of Oslo, PO Box 1053 Blindern, NO 0316 Oslo, Norway.  email: {\tt kaibohu@math.uio.no.}}}
\date{}
\begin{document}

\maketitle

\begin{abstract}
We provide both a general framework for discretizing de Rham sequences of differential forms of high regularity, and some examples of finite element spaces that fit in the framework. The general framework is an extension of the previously introduced notion of Finite Element Systems, and the examples include conforming mixed finite elements for Stokes' equation. In dimension 2 we detail four low order finite element complexes and one infinite family of highorder finite element complexes. In dimension 3 we define one low order complex, which may be branched into Whitney forms at a chosen index. Stokes pairs with continuous or discontinuous pressure are provided in arbitrary dimension. The finite element spaces all consist of composite polynomials. The framework guarantees some nice properties of the spaces, in particular the existence of commuting interpolators. It also shows that some of the examples are minimal spaces.
\end{abstract}

\bigskip

\noindent MSC: 65N30, 58A12.

\section*{Introduction}
This article is concerned with developing finite element complexes similar to those described in \cite{RavTho77}\cite{Ned80}\cite{Hip99}\cite{Chr07NM}\cite{ArnFalWin06}\cite{ChrRap16}, but with enhanced continuity properties. Finite element spaces should be compatible in a precise sense, which in general will depend on the partial differential equation one wants to solve and will reflect the functional framework one adopts for the analysis. One way of phrasing compatibility is that the discrete spaces should form a subcomplex of a certain Hilbert complex, whose norm reflects the desired continuity, and that they should be equipped with bounded projections that commute with the differential operators. The fields we seek to discretize here can all be interpreted as differential forms, and the relevant operators are instances of the exterior derivative. What we seek can then be called good discrete de Rham sequences. The finite elements described in the above cited works are only partially continuous: for vectorfields continuity holds only in either tangential or normal directions, at interfaces of the mesh. In this paper, full continuity is achieved. This is particularly relevant for the Stokes equation.

In many cases the bounded projections alluded to above, can be obtained by an averaging technique \cite{ChrWin08}\cite{ChrMunOwr11}, from the interpolators associated with degrees of freedom, defined on smooth differential forms. Since the averaging technique is defined so as to commute with the differential, we may concentrate on getting degrees of freedom that provide commuting interpolators. As it turns out, the existence of such degrees of freedom, on a finite element space, can be deduced from a few algebraic constraints, that have been clarified in a framework of Finite Element Systems (FES) introduced in \cite{Chr08M3AS} and further developed in \cite{Chr09AWM}\cite{ChrRap16}\cite{ChrGil16}. A precise notion of \emph{compatibility} guarantees that the so-called harmonic degrees of freedom are unisolvent and provide an interpolator that commutes. 

Recall Ciarlet's definition of a finite element (e.g. \cite{Cia91} \S 10), in terms of spaces equipped with degrees of freedom (DoF). The framework of FES gives DoFs a secondary role. Rather, compatibility is defined in terms of restrictions and differentials. On a compatible FES there will in general be \emph{many} choices of DoFs, for the \emph{same} spaces. The harmonic DoFs are a natural choice among these possibilities. As we will see DoFs seem most useful to describe low order elements, where there is not so much choice.

In this paper we provide both a generalization of the framework of FES that can handle higher order continuity of differential forms and some examples of new spaces that fit into the framework. The generalization essentially consists in allowing for other types of restriction operators than pullback of differential forms. These restriction operators reflect that higher continuity implies that more information about the fields should be available on interfaces in the mesh. The examples of FES we provide, are all composite finite elements on a simplicial mesh, that are piecewise polynomials with respect to a simplicial refinement. For the spaces of scalar functions, considered as $0$-forms, we use continuously differentiable composite elements, as introduced by Hsieh and Clough-Tocher and discussed further in \cite{DouDup79}\cite{Cia91}\cite{LaiSch07}\cite{Wal14}. The rest of the sequences, pertaining to differential $k$-forms for $k \geq 1$, appear to be new. These sequences end with conforming mixed finite elements for the Stokes equation: continuous vectorfields with either continuous or discontinuous divergence. 

Our results are quite closely related to those of \cite{FalNei13}, where finite element families for Stokes' equation are defined in 2D, for both $\rmH^1-\rmL^2$ conforming and $\rmH^1_{\div}-\rmH^1$ conforming settings, as part of de Rham sequences with high regularity. In \cite{Nei15} these results are extended to 3D. The spaces attached to triangles or tetrahedra consist of polynomials, including polynomial bubbles. In particular they are smooth functions. Their degrees of freedom for vectorfields include in particular all first order partial derivatives at vertices. Our local spaces of vectorfields, on the other hand, consist of composite polynomials which are not necessarily of class $\rmC^1$, and our degrees of freedom at vertices are just the vertex values and (for $\rmH^1_{\div}$) vertex values of the divergence, which is a particular combination of first order derivatives. We point out that our spaces come equipped with commuting interpolators. The lower order continuity/differentiability imposed at vertices (for instance), and the composite nature of our elements, seem important in this respect, from the point of view provided by FES, for the given continuity one wants to achieve. The commuting diagram that we obtain, as a consequence of compatibility, makes the proof of the inf-sup condition easier than the macro-element techniques introduced for Stokes in \cite{Ste84}. Or, at least, it provides an alternative type of proof.

We also mention a connection with \cite{GuzNei14a}\cite{GuzNei14b}. In two dimensions they construct a complex of spaces equipped with degrees of freedom that provide commuting interpolators, and such that the two last spaces form a Stokes pair. In dimension 3 they construct Stokes pairs equipped with degrees of freedom that make the interpolator commute. In both cases, the local spaces contain rational functions, where we have used composite polynomials for similar purposes. In dimension two their lowest order complex resolves a $\rmC^1$ element due to Zienkiewicz, whereas in our case we resolve the Clough-Tocher element. See Remarks \ref{rem:gn14a} and \ref{rem:gn14b} for further considerations.


There is a vast literature on the construction of stable Stokes pairs. The most natural candidate seems to be the $\rmC^{0}\poly^{p} - \poly^{p-1}$ pair, where the velocity is discretised by Lagrange elements of degree $p$, and the pressure with discontinuous polynomials of degree $p-1$. This is called the Scott-Vogelius element \cite{ScoVog85}, which is easy to implement and leads to strong divergence-free discretisations ; actually $\div V_{h}\subseteq Q_{h}$, for velocity space $V_{h}$ and pressure space $Q_{h}$. However the surjectivity and inf-sup conditions are subtle. The divergence operator $\div : \rmC^0\poly^{p} \to \poly^{p-1}$ is onto when there are no ''singular vertices''. The definition  of singular vertex is clearcut in 2D ; in \cite{ScoVog85} it is shown that in 2D, when there is no singular vertex and $p\geq 4$, the inf-sup condition holds (with respect to $\rmH^1 - \rmL^2$ norms). In 3D, it remains open to define all singular vertices and edges, and find the minimal polynomial degree $p$, see \cite{Zha05}.

Instead of trying to identify singular vertices and edges, people also identify refinements of simplicial meshes, where the inf-sup condition holds: 

-- In 2D, on triangles with Clough-Tocher splits, stability of $\rmC^{0}\poly^{2} - \poly^{1}$ and $\rmC^{0}\poly^{3} - \poly^{2}$  approximations was shown in the thesis of Qin \cite{Qin94}, see also \cite{ArnQin92}. In 2D, the stability of quadratic velocity and linear pressure on crisscross triangulation can be found in \cite{ArnQin92}. On two dimensional Powell-Sabin splits, the $\rmC^0\poly^{1} - \poly^{0}$ pair is stable \cite{Zha08}.

-- The 3D case is more involved. When we subdivide a tetrahedra into four, by the Alfeld split that connects one internal point with the four vertices, the inf-sup condition was shown in \cite{Zha05}. The lowest degree in this case is $\rmC^0\poly^{4} - \poly^{3}$. On Powell-Sabin splits, $\rmC^0\poly^{2} - \poly^{1}$ is stable \cite{Zha11}.  

The main technique of proof in the above cases seems to be the macroelement technique of \cite{Ste84}. Here we rely instead on (often exact) sequences connected by cochain morphisms. In 2D we introduce sequences based on the Clough-Tocher $\rmC^1$ element, so that naturally we are led to the $\rmC^0\poly^{2} - \poly^{1}$ pair for Stokes, but not $\rmC^0\poly^{1} - \poly^{0}$.

\bigskip

To be more specific on our contributions, we consider an $n$-dimensional domain $S$, say in the Euclidean space $ \bbR^n$. The space of alternating $k$-linear forms on $\bbR^n$ is denoted $\alter^k(\bbR^n)$. For $r \geq 0$ we denote by $\rmH^r\difo^k(S)$ the spaces of $k$-forms on $S$ with partial derivatives up to order $r$ in $\rmL^2(S)\otimes \alter^k(\bbR^n)$. We denote by $\rmH^r_\rmd\difo^k(S)$ the following space:
\begin{equation}
\rmH^r_\rmd\difo^k(S) = \{u \in \rmH^r\difo^k(S) \ : \ \rmd u \in \rmH^r\difo^{k+1}(S) \}.
\end{equation}
We are interested in the complexes:
\begin{equation}
\xymatrix{
\ldots \ar[r] & \rmH^r_\rmd\difo^{k-1}(S)\ar[r] & \rmH^r_\rmd\difo^k(S)\ar[r] & \rmH^r_\rmd\difo^{k+1}(S)\ar[r] & \ldots
}
\end{equation}
We are also interested in letting $r$ decrease in the complex, at some index, as follows:
\begin{equation}
\xymatrix{
\ldots \ar[r] & \rmH^r_\rmd\difo^{k-1}(S)\ar[r] & \rmH^r\difo^k(S)\ar[r] & \rmH^{r-1}_\rmd\difo^{k+1}(S)\ar[r] & \ldots
}
\end{equation}

If we restrict attention to dimension $n= 2$ and $r =0 , 1$ this leaves us with three possibilities:
\begin{equation}
\xymatrix{
\rmH^1\difo^{0}(S)\ar[r] & \rmH^0_\rmd\difo^1(S)\ar[r] & \rmH^0\difo^{2}(S)
}
\end{equation}

\begin{equation}
\xymatrix{
\rmH^2\difo^{0}(S)\ar[r] & \rmH^1\difo^1(S)\ar[r] & \rmH^0\difo^{2}(S)
}
\end{equation}

\begin{equation}
\xymatrix{
\rmH^2\difo^{0}(S)\ar[r] & \rmH^1_\rmd\difo^1(S)\ar[r] & \rmH^1\difo^{2}(S)
}
\end{equation}
We refer to these sequences as de Rham sequences with regulartity $(1,0+,0)$, $(2,1,0)$ and $(2,1+,1)$ respectively. The two last spaces in the two last sequences are of interest for conforming discretizations of the Stokes equation. It should be pointed out that some reformulations of the Stokes equation with auxilliary variables, can be handled with the first type of sequence (e.g. \cite{Ned82}). There are also examples of non-conforming methods that have been successfull, such as the Crouzeix-Raviart element \cite{Bre15}. As we see it, these methods have been developed because $\rmH^1$-conforming methods, such as those we introduce here, were not known.

We are interested in constructing finite element spaces which provide subcomplexes of the above three complexes. These subcomplexes should be equipped with commuting interpolation operators. For this purpose a framework of FES has been developed for the first type of complex, starting in \cite{Chr08M3AS}.  It is summarized in \cite{ChrRap16}. In this paper, we extend the framework so that it can encompass the other two types of complexes, and more generally, we believe, arbitrary $r \geq 0$ as well as switches between different $r$ as sketched above. For small $r$ we provide examples that illustrate that high order polynomials can be included in the finite element spaces, to achieve arbitrarily high approximation order. In arbitrary dimension we also illustrate that it can be useful to consider different simplicial refinements at different indices of the differential complex. A key tool in our construction is the use of the Poincar\'e operators, as has already been used to construct complexes of regularity $(1,0+,0)$, and generalizations to arbitrary dimension, \cite{Hip99}\cite{ArnFalWin06}. Many more examples than those provided here, should fit in the proposed framework. 

The paper can be seen as a step towards a general theory of discretization of highly continuous fields (sections of vector bundles), in terms of inverse systems of complexes of jets. From this point of view, the present paper provides examples of $r$-jets of order $r=0$ and $r= 1$. This already seems adequate for many of the PDEs we have in mind, since they are at most second order.

The paper is organized as follows. In \S \ref{sec:reg} we relate the regularity of differential forms to their inter-element continuity, expressed with three different restriction operators. In \S \ref{sec:poinc} we recall methods for proving sequence exactness under the exterior derivative, using the Poincare operator and we sketch how it intervenes in finite element constructions. In \S \ref{sec:lowex} we provide four examples of low order composite finite element sequences in space dimension 2. This motivates the framework of generalized finite element systems and gets the machinery started, with respect to higher order polynomials. In \S \ref{sec:gfes} we provide the appropriate notions on generalized FES, leading up to the notion of harmonic interpolator. In \S \ref{sec:twodhigh} we provide, in dimension 2, examples of composite finite element de Rham sequences with enhanced continuity and arbitrarily high degree of polynomials. In \S \ref{sec:toolhigh} we provide some tools for defining composite finite elements in arbitrary space dimension. In particular we define different simplicial refinements and study some continuous piecewise affine forms on them. In \S \ref{sec:spacehigh} we provide a composite finite element de Rham sequence with enhanced continuity and low order polynomials (at most degree two). We also show how such sequences can be branched into Whitney forms at some index. We conclude with some topics for further research.

\section{Restrictions and regularity of differential forms\label{sec:reg}}

\paragraph{Restriction operators adapted to different regularities.}

Consider a simplicial complex $\calT$ on a domain $S$ in a vector space $\bbV$ of dimension $n$. For differential forms which are piecewise smooth with respect to $\calT$ we have:
\begin{itemize}
\item $u \in \rmH^0_\rmd \difo^k(S)$ iff the pullbacks to faces are singlevalued. If $T\in \calT$ is a simplex, \emph{pullback} means here pullback in the sense of differential forms by the injection $T \to S$. It remembers the action of $u$ only on vectors which are tangent to $T$ (see the paragraph leading to (\ref{eq:pullbackdef})).

In terms of vector proxies $\rmH^1_\rmd \difo^k(S)$ corresponds to $\rmL^2(S)$ vectorfields with $\curl$ in $\rmL^2(S)$, for which the pullback corresponds to taking the tangential component of the vectorfield. On the other hand $\rmH^0_\rmd \difo^{n-1}(S)$ corresponds to $\rmL^2(S)$ vectorfields with $\div$ in $\rmL^2(S)$, for which the pullback to codimension 1 faces corresponds to taking the normal component of the vectorfield.

\item $u \in \rmH^1 \difo^k(S)$ iff the traces on faces are singlevalued. Here \emph{trace} means restriction in the usual sense, remembering the action of $u$ on all tangent vectors in $S$ (not only $T$).

For vector proxies this trace operator corresponds to keeping all the compnents of the vectorfields on the faces.

\item $u \in \rmH^1_\rmd \difo^k(S)$ iff the traces on faces of both $u$ and $\rmd u$ are singlevalued on faces. Here the word trace is used with the same meaning as above.
\end{itemize}
It will be convenient to denote by $\rmC^r\difo^k(S)$ the space of $k$-forms on $S$ of class $\rmC^r$ and by $\rmC^r_\rmd\difo^k(S)$ the space of $u  \in \rmC^r\difo^k(S)$ such that $\rmd u \in \rmC^r\difo^{k+1}(S)$.

We interpret the above conditions ensuring various kinds of regularity, by saying that we have defined three types of \emph{restriction} operators.
Explicitely, according to context, the restriction of a differential form $u \in \rmC^r_\rmd\difo^k(S)$ to a face $T$ of $S$ will be:
\begin{itemize}
\item the pullback of $u$, denoted $\pull_T u$, which is in $\rmC^r_\rmd\difo^k(T)$. 
\item the trace of $u$, denoted $\trace_T u$, which is in $\rmC^r(T) \otimes \alter^k(\bbV)$.
\item the double-trace of $u$, written $( \trace_T u, \trace_T \rmd u)$, which is in $\rmC^r(T)\otimes  \alter^k(\bbV) \oplus \rmC^r(T)\otimes \alter^{k+1}(\bbV)$.
\end{itemize}

The framework of FES, introduced in \cite{Chr08M3AS} and developed further in \cite{Chr09AWM}\cite{ChrMunOwr11}\cite{ChrRap16}\cite{ChrGil16} was designed to handle restrictions of the first type, whereas now we are interested in the other cases as well. More generally, we will consider a cellular complex $\calT$ and restrictions from $T$ to $T'$ where $T, T'$ are cells in $\calT$ and $T' \subseteq T$.

\paragraph{Admissibility condition.}
When we start with a $k$-form $u \in \rmC^r_\rmd\difo^k(S)$, the trace of $(u, \rmd u)$ on a cell $T$, also called the double-trace of $u$, is in $\rmC^r(T)\otimes \alter^k(\bbV) \oplus \rmC^r(T)\otimes \alter^{k+1}(\bbV)$, but all elements of the latter sum cannot occur. In other words there are admissibility conditions. In this paragraph we determine them. 

First we introduce some notations:

-- When $v \in \rmC^r(T)\otimes \alter^k(\bbV)$ we denote by $\pull_T v \in \rmC^r\difo^k(T)$ the induced $k$-form on $T$, that remembers the action of $u$ only on vectors in $\bbV$ that are tangential to $T$.

-- When $u$ is a $k$-form on $S$ and $X$ is a vectorfield on $S$, we denote by $u \ctr X$ the contraction of $u$ by $X$, which is the $(k-1)$-form defined at $x \in S$ by:
\begin{equation}
(u \ctr X)_x(\xi_2, \ldots, \xi_{k}) = u_x( X(x), \xi_2, \ldots, \xi_k).
\end{equation}

\begin{lemma}
Let $\bbV$ be a finite dimensional vector space. Let $(e_i)$ be a basis of $\bbV$ and let $(f_i)$ be the dual basis. Then for $u \in \alter^k(\bbV)$, $k \geq 1$, we have:
\begin{equation}
\sum_i f_i \wedge (u \ctr e_i) = k\, u.
\end{equation}
\end{lemma}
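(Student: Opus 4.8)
The plan is to verify the identity by evaluating both sides on an arbitrary $k$-tuple of vectors $\xi_1, \ldots, \xi_k \in \bbV$, since a $k$-form is determined by its values on such tuples. The only external ingredient I need is the elementary formula for the wedge product of a $1$-form $f$ with a $(k-1)$-form $v$, namely
\[
(f \wedge v)(\xi_1, \ldots, \xi_k) = \sum_{a=1}^{k} (-1)^{a-1} f(\xi_a)\, v(\xi_1, \ldots, \widehat{\xi_a}, \ldots, \xi_k),
\]
where the hat denotes omission. Applying this with $f = f_i$ and $v = u \ctr e_i$, and recalling the definition $(u \ctr e_i)(\eta_2, \ldots, \eta_k) = u(e_i, \eta_2, \ldots, \eta_k)$, the left-hand side of the asserted identity, evaluated on $(\xi_1, \ldots, \xi_k)$, becomes a double sum over the index $i$ and over the position $a$.

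First I would interchange the two sums and carry out the sum over $i$ for each fixed $a$. The only place $i$ occurs is through the product $f_i(\xi_a)\, u(e_i, \ldots)$, and since $(e_i)$ and $(f_i)$ are dual bases we have the reconstruction $\sum_i f_i(\xi_a)\, e_i = \xi_a$. By multilinearity of $u$ in its first slot, this collapses the $i$-sum to the single term $u(\xi_a, \xi_1, \ldots, \widehat{\xi_a}, \ldots, \xi_k)$. Then I would move the argument $\xi_a$ from the first slot back to its natural position $a$; since $u$ is alternating, this costs $a-1$ adjacent transpositions, producing a factor $(-1)^{a-1}$ and turning the expression into $(-1)^{a-1}\, u(\xi_1, \ldots, \xi_k)$.

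The crux of the argument is the cancellation of signs: the factor $(-1)^{a-1}$ coming from the wedge expansion meets the factor $(-1)^{a-1}$ coming from reordering the arguments of $u$, and their product is $+1$. Hence each value $a = 1, \ldots, k$ contributes exactly $u(\xi_1, \ldots, \xi_k)$, and summing over the $k$ positions yields $k\, u(\xi_1, \ldots, \xi_k)$, as claimed. I expect the sign bookkeeping to be the only delicate point; everything else is linear algebra and the duality relation. An alternative, more structural route would be to recognise $\sum_i f_i \wedge (\,\cdot\, \ctr e_i)$ as the number operator $\sum_i a_i^\ast a_i$ built from the creation operators $a_i^\ast = f_i \wedge \cdot$ and annihilation operators $a_i = \,\cdot\, \ctr e_i$, which satisfy the canonical anticommutation relations $a_i a_j^\ast + a_j^\ast a_i = \delta_{ij}$; the degree-counting property $N|_{\alter^k} = k\,\id$ then follows formally, but establishing the anticommutation relation requires essentially the same computation, so the direct evaluation above seems the most economical.
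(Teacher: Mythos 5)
Your proof is correct, and it takes a different route from the paper's. The paper disposes of this lemma with a one-line induction on $k$: the inductive step rests on the antiderivation property of contraction, $(f \wedge v) \ctr e_i = f(e_i)\, v - f \wedge (v \ctr e_i)$, which lets one peel off a leading $1$-form factor and reduce to degree $k-1$, with no evaluation formula for the wedge product ever written down. You instead verify the identity pointwise on an arbitrary $k$-tuple, using the explicit expansion of $(f \wedge v)(\xi_1,\ldots,\xi_k)$ for a $1$-form $f$, the dual-basis reconstruction $\sum_i f_i(\xi_a)\, e_i = \xi_a$, and the sign count for moving $\xi_a$ back to position $a$; the double cancellation of the two $(-1)^{a-1}$ factors is exactly right, and each of the $k$ positions contributes one copy of $u(\xi_1,\ldots,\xi_k)$. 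What each approach buys: the induction is shorter and stays entirely at the level of algebraic identities among forms, but implicitly relies on the reader supplying both the base case and the antiderivation rule; your direct computation is fully self-contained and makes the sign bookkeeping transparent, at the cost of committing to a normalization convention for the wedge product (the one you use, where $f_1 \wedge f_2$ evaluates as a $2\times 2$ determinant, is the standard choice and the one consistent with the paper's later use of $\rmd(w \ctr X) = (k+1)\, w$ for constant $(k+1)$-forms $w$). Your closing remark identifying $\sum_i f_i \wedge (\,\cdot\, \ctr e_i)$ as the fermionic number operator is a nice structural observation, and your assessment is accurate: proving the anticommutation relations costs the same computation, so the direct evaluation is indeed the most economical complete argument here.
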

\begin{proof}
By induction on $k$.
\end{proof}
We may consider that this identity is true also for $k=0$, the left hand side being $0$ by definition of contraction of $0$-forms.

\begin{proposition} Fix $r \geq 0$.
Let $\bbV$ be a vector space and let $T$ be a subspace. Let $v_0\in \rmC^{r+1}(T)\otimes  \alter^k(\bbV)$ and $v_1 \in \rmC^r(T)\otimes \alter^{k+1}(\bbV)$. The following are equivalent:
\begin{itemize}
\item There exists $u \in \rmC^{r+1} \difo^k(\bbV)$ such that $\trace_T u = v_0$ and $\trace_T \rmd u = v_1$.
\item The induced forms $\pull_T v_0 \in \rmC^{r+1}\difo^k(T)$ and $\pull_T v_1 \in \rmC^r \difo^{k+1}(T)$ (obtained by remembering only the action on tangent vectors to $T$),  are related by:
\begin{equation} 
\rmd \pull_T v_0 = \pull_T v_1.
\end{equation}
\end{itemize} 
\end{proposition}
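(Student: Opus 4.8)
The plan is to treat the two implications separately, the first by naturality of pullback and the second by an explicit construction in coordinates adapted to $T$. For (1)$\Rightarrow$(2), let $\iota : T \to \bbV$ be the inclusion and observe that the composite operator $\pull_T \circ \trace_T$ is, by construction, exactly the pullback of forms $\iota^\ast$ (both remember only the action on vectors tangent to $T$). Hence $\pull_T v_0 = \iota^\ast u$ and $\pull_T v_1 = \iota^\ast (\rmd u)$. Since pullback commutes with the exterior derivative, $\rmd\, \iota^\ast u = \iota^\ast \rmd u$, which is precisely $\rmd \pull_T v_0 = \pull_T v_1$. This direction needs no regularity beyond $\rmC^1$.

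For (2)$\Rightarrow$(1) I would first fix a splitting $\bbV = T \oplus N$ with dual basis $f_1, \ldots, f_n$ so that $f_1, \ldots, f_m$ are tangential and $f_{m+1}, \ldots, f_n$ are normal. Extend $v_0$ to a form $\tilde v_0 \in \rmC^{r+1}\difo^k(\bbV)$ by declaring its components constant in the normal directions, so that $\trace_T \tilde v_0 = v_0$. Set $w := v_1 - \trace_T \rmd \tilde v_0 \in \rmC^r(T) \otimes \alter^{k+1}(\bbV)$. Applying the implication (1)$\Rightarrow$(2), already proved, to $\tilde v_0$ gives $\rmd \pull_T v_0 = \pull_T \trace_T \rmd \tilde v_0$; comparing with the hypothesis $\rmd \pull_T v_0 = \pull_T v_1$ yields $\pull_T w = 0$. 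Concretely this means that every monomial $f_J$ occurring in $w$ contains at least one normal index.

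It then remains to build a correction $\delta u \in \rmC^{r+1}\difo^k(\bbV)$ with $\trace_T \delta u = 0$ and $\trace_T \rmd\, \delta u = w$, for then $u := \tilde v_0 + \delta u$ solves the problem. Because $\delta u$ vanishes on $T$, its tangential derivatives vanish there as well, so $\trace_T \rmd\, \delta u = \sum_{l > m} f_l \wedge (\partial_l \delta u|_T)$. I therefore first solve the purely algebraic, fibrewise problem $\sum_{l > m} f_l \wedge \eta_l = w$ for forms $\eta_l$: writing $w = \sum_J w_J f_J$, the fact that every such $J$ meets the normal indices together with the contraction identity of the preceding Lemma lets me take $\eta_l := \sum_J n_J^{-1} (f_J \ctr e_l)\, w_J$, where $n_J$ is the number of normal indices in $J$ (one checks $\sum_{l>m} f_l \wedge (f_J \ctr e_l) = n_J f_J$). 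This assignment is linear in the coefficients of $w$ and hence produces $\eta_l \in \rmC^r(T) \otimes \alter^k(\bbV)$.

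The hard part is the last step: realizing $\delta u$ as a genuinely $\rmC^{r+1}$ form with $\delta u|_T = 0$ and prescribed first normal derivatives $\partial_l \delta u|_T = \eta_l$, where the data $\eta_l$ is only of class $\rmC^r$. The naive choice $\delta u = \sum_{l > m} x_l\, \eta_l(x')$ has the right trace and normal derivatives, but is only $\rmC^r$, since its top-order tangential derivatives fall on $\eta_l$. The resolution is a Whitney/Seeley-type extension: one replaces $\eta_l(x')$ by a tangential mollification $\eta_l \ast \phi_{|x''|}$ at a scale comparable to the distance $|x''|$ to $T$. The factor $x_l$, which vanishes to first order on $T$, exactly compensates the blow-up of the high tangential derivatives of the mollification, so the resulting $\delta u$ is of class $\rmC^{r+1}$ while still satisfying $\delta u|_T = 0$ and $\partial_l \delta u|_T = \eta_l$ (at $x'' = 0$ the mollification scale is $0$, hence the identity). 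Setting $u = \tilde v_0 + \delta u$ concludes; this regularity bookkeeping in the extension step is the only genuinely delicate point, everything else being naturality of pullback and fibrewise linear algebra.
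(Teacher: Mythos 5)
Your proposal follows the same skeleton as the paper's proof: the forward direction by naturality of pullback under $\rmd$ is identical, and so is the reduction in the converse — trivial extension of $v_0$ constant in the normal directions, subtraction to reach the case $(0,w)$ with $\pull_T w = 0$, and a correction built by contracting with the normal position field. Indeed, your fibrewise solution $\eta_l = \sum_J n_J^{-1}(f_J \ctr e_l)\, w_J$, assembled into $\delta u = \sum_{l>m} x_l\,\eta_l$, is exactly the coordinate form of the paper's choice $u = \tfrac{1}{l}\, v \ctr X$ with $X(x+y)=y$, applied monomial by monomial (your weight $n_J$ is the paper's $l$, the number of normal factors), and both rest on the same contraction identity. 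Where you genuinely depart from the paper is the final mollification step, and this is worth noting: the paper stops at the naive correction, i.e.\ it takes $u = \tfrac{1}{l}\, v \ctr X$ with the coefficient $w \in \rmC^r(T)$ extended trivially, and asserts this is a suitable extension. As you correctly observe, a function of the form $x_l\,\eta_l(x')$ with $\eta_l$ merely $\rmC^r$ is in general only $\rmC^r$, not $\rmC^{r+1}$ (its $(r+1)$-st purely tangential derivatives would require $D^{r+1}\eta_l$), so for finite $r$ the paper's construction as written does not literally land in $\rmC^{r+1}\difo^k(\bbV)$; it does so verbatim only for smooth or polynomial data, which is the case actually used in the rest of the paper, where the issue is invisible. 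Your Whitney/Seeley-type repair — mollifying $\eta_l$ tangentially at scale comparable to the distance to $T$, with the vanishing factor $x_l$ absorbing the $s^{-1}\omega(s)$ blow-up of the extra tangential derivative — is the standard and correct way to obtain the statement at the stated regularity (alternatively one can quote Whitney's extension theorem for the partial jet $(0,\eta_l,0,\dots)$). Two small points to tighten if you write this out: $|x''|$ is not smooth at $x''=0$, so you should use a regularized distance or a dyadic decomposition, and the existence (not just vanishing limits) of the top-order derivatives at points of $T$ needs a short mean-value argument together with the modulus-of-continuity estimate you allude to. So: same algebraic core as the paper, but your extra analytic step is not redundant — it is what the finite-regularity claim actually requires.
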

\begin{proof}
\tpoint The first condition implies the second, because the exterior derivative commutes with pullback. 

\tpoint We prove that the second condition implies the first.  We write $\bbV = T \oplus U$. We introduce a vector field $X$ on $\bbV$, defined by, for any $x \in T$ and any $y \in U$:
\begin{equation}
X(x + y) = y.
\end{equation}
We choose a basis $(e_i)_{i \in I}$ of $T$ and $(e_j)_{j \in J}$ of $U$. We impose $I \cap J = \emptyset$, so they combine to a basis of $\bbV$ and we let $(f_i)_{i \in I \cup J}$ denote the corresponding dual basis of $\bbV$. We let $\partial_i$ denote the directional derivative with respect to $e_i$.

\tpoint We first extend $v_0$ to an element $u$ of $\rmC^{r+1} \difo^k(\bbV)$ by putting $u(x +y) = v_0 (x)$ for $x \in T$ and $y \in U$. Substracting this extension we are left with the the case $v_0 =0$ and $\pull_T v_1=0$. To avoid clutter we denote $v=v_1$.

\tpoint Suppose $v$ is of the form: $v = w\, w_T \wedge w_U$ with $w_U = f_{j_1} \wedge \ldots \wedge f_{j_l}$ (with $l\geq 1$ distinct indices in $J$), $w_T = f_{i_1} \wedge \ldots \wedge f_{i_{k+1-l}}$ (with $k+1-l$ distinct indices in $I$) and $w$ a scalar function on $T$.

We trivially extend $w$ to $\bbV$, which yields an extension of $v$ to a $(k+1)$-form on $\bbV$, which we still denote by $v$. We put $u = v \ctr X$. We write:
\begin{align}
\rmd (v \ctr X) & = \sum_{i} f_i \wedge \partial_i (v \ctr X), \\
& = \sum_{i \in I} f_i \wedge ((\partial_i v) \ctr X) + \sum_{j \in J} f_j \wedge (v \ctr e_j).
\end{align}
The first term here, when restricted to $T$, is zero. For the second term we have:
\begin{align}
\sum_{j \in J} f_j \wedge (v \ctr e_j) & = \sum_{j \in J} f_j \wedge ((-1)^{k+1-l} w w_T \wedge (w_U \ctr e_j))\\
& = w w_T \wedge \sum_{j \in J} f_j \wedge (w_U \ctr e_j),\\
& = l\, v.
\end{align}
We also remark that $v \ctr X$ is zero on $T$. Dividing $v\ctr X$ by $l$, we have a suitable extension of $(0, v)$.

\tpoint Now, in general, the condition $\pull_T v_1= 0$ guarantees that $v_1$ is a linear combination of forms $w\, w_T \wedge w_U$ of the above type, all for some $l \geq 1$. 
\end{proof}

This result motivates the following definition.
\begin{definition}
Let $v_0 \in \rmC^0(T) \otimes \alter^k(\bbV)$ and $v_1 \in \rmC^0(T) \otimes \alter^{k+1}(\bbV)$. We say that the pair $(v_0,v_1)$ is \emph{admissible} if $\rmd \pull_T v_0 = \pull_T v_1$, where $\rmd \pull_T v_0$ is defined a priori in the sense of distributions.
\end{definition}

\paragraph{On the necessity of composite elements.}

Consider the line $T= \bbR \times \{0\}$ sitting in $\bbV = \bbR^2$. The preceding paragraph shows that in order to extend data on $T$, consisting of a pair $(v_0, v_1) \in C^{r+1}(T) \oplus C^r(T) \otimes \alter^1(\bbV)$,  to a function in $\rmC^{r+1}(\bbV)$, there is the compatibility condition $\rmd v_0 = \pull_T v_1$. We now illustrate that if several lines meet at a vertex (which will be the case in simplicial complexes), additional compatibility conditions could appear at the vertex, if we require the extension to be at least $\rmC^2(\bbV)$.

Suppose we have two coordinates $(x,y)$. We have data consisting of functions $p_0$, $p_1$ on the $x$-axis which are $\rmC^1(\bbR)$ and $\rmC^0(\bbR)$ respectively and as well as functions $q_0$, $q_1$ on the $y$-axis that are $\rmC^1(\bbR)$ and $\rmC^0(\bbR)$ respectively.

We want to find a function $u$ on $\bbR^2$ of class $\rmC^1(\bbR^2)$ such that $(u, \partial_y u)$ restricts to $(p_0, p_1)$ on the $x$-axis and $(u, \partial_x u)$ restricts to $(q_0, q_1)$ on the $y$-axis. There are compatibily conditions at the origin:
\begin{equation}
(p_0(0), \dot p_0(0), p_1(0)) = (q_0(0), q_1(0), \dot q_0(0)).
\end{equation}
These are sufficient for the existence of a $\rmC^1(\bbR^2)$ extension.

However, for extensions of class $\rmC^2(\bbR^2)$ of the same data, there is an additional constraint, expressing that $\partial_x \partial_y u = \partial_y \partial_x u$ at the origin, namely:
\begin{equation}
\dot p_1(0) = \dot q_1(0).
\end{equation}
This remark applies in particular to polynomials. Compare with the fact that the Argyris element is $\rmC^2$ at vertices, even though one only wants to obtain $\rmC^1$ functions.

In this paper we are not interested in constructing functions that are globally $\rmC^2$. We want $\rmC^1$ functions, glued together from data on subsimplices that only involve derivatives up to order $1$.

This explains why we prefer to construct spaces in terms of composite polynomials: we can then hope to satisfy first order constraints (that guarantee $\rmC^1$ continuity), without adding second order constraints (corresponding for instance to symmetry of mixed derivatives as above). Another choice could have been to use rational functions that are $\rmC^1$ on the simplices but not $\rmC^2$.

\paragraph{A differential acting on admissible pairs.}

Let $T$ be a flat cell in a vectorspace $\bbV$. Suppose that we have subspaces $B^k(T)$ of $\rmC^0(T) \otimes \alter^{k}(\bbV)$, such that the exterior derivative on $T$ maps $ \pull_T B^k(T)$ into $\pull_T B^{k+1}(T)$. Then we define the following spaces of admissible pairs:
\begin{equation}
A^k(T) = \{(v_0, v_1) \in B^k(T) \oplus B^{k+1}(T) \ : \ \rmd \pull_T v_0 = \pull_T v_1 \}.
\end{equation}

We define the following differential:
\begin{equation}\label{eq:difadm}
\sfd^k: \mapping{A^k(T)}{A^{k+1}(T)}{(v_0,v_1)}{(v_1, 0)}
\end{equation}
It is well defined, because if $(v_0,v_1)$ is admissible then $\rmd \pull_T(v_1) =  \rmd^2 \pull_T v_0= 0$, so $(v_1, 0)$ is admissible. Moreover we see that $\sfd^{k+1} \circ \sfd^k =0$.

\begin{lemma}\label{lem:exind}
The sequence:
\begin{equation}
A^k(T) \to A^{k+1}(T) \to A^{k+2}(T),
\end{equation}
is exact if and only if the sequence:
\begin{equation}
\pull_T B^{k}(T) \to \pull_T B^{k+1}(T) \to \pull_T B^{k+2}(T), 
 \end{equation}
is exact.
\end{lemma}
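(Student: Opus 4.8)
The plan is to prove the (single) exactness at the middle term directly, by computing the kernel of $\sfd^{k+1}$ and the range of $\sfd^k$ and matching them against the kernel and range of $\rmd$ downstairs. Throughout, recall that by construction $\pull_T \colon B^{k+1}(T) \to \pull_T B^{k+1}(T)$ is surjective, since $\pull_T B^{k+1}(T)$ is by definition its image. The decisive structural feature is that $\sfd$ has the shape $(v_0,v_1)\mapsto (v_1,0)$: every element of its range has vanishing second component, and $\sfd^{k+1}(w_0,w_1)=0$ already forces $w_1=0$ (so the admissibility of $(w_0,w_1)$ then gives $\rmd\pull_T w_0 = \pull_T w_1 = 0$). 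Hence both $\ker\sfd^{k+1}$ and $\ran\sfd^k$ consist of pairs of the form $(w,0)$ and are completely determined by their first components, which are subsets of $B^{k+1}(T)$.

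First I would record those first-component sets explicitly. Using admissibility one gets $\ker\sfd^{k+1} = \{(w,0) \ : \ w\in B^{k+1}(T), \ \rmd\pull_T w = 0\}$, whereas $\ran\sfd^k = \{(w,0) \ : \ w\in B^{k+1}(T), \ \pull_T w \in \rmd\,\pull_T B^k(T)\}$; here the admissibility of a preimage $(v_0,v_1)$ is precisely what converts the condition ``$\exists\,v_0\in B^k(T)$ with $\rmd\pull_T v_0 = \pull_T w$'' into the membership $\pull_T w \in \rmd\,\pull_T B^k(T)$, and $\rmd^2=0$ gives the easy inclusion $\ran\sfd^k \subseteq \ker\sfd^{k+1}$ for free. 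The point is now that these two first-component sets are exactly the $\pull_T$-preimages, inside $B^{k+1}(T)$, of $\ker\bigl(\rmd \colon \pull_T B^{k+1}(T)\to\pull_T B^{k+2}(T)\bigr)$ and of $\ran\bigl(\rmd \colon \pull_T B^k(T)\to\pull_T B^{k+1}(T)\bigr)$ respectively (the hypothesis that $\rmd$ maps $\pull_T B^k(T)$ into $\pull_T B^{k+1}(T)$ guarantees these really are subsets of $\pull_T B^{k+1}(T)$).

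The crux is the elementary observation that, since $\pull_T$ is onto $\pull_T B^{k+1}(T)$, two subsets of $\pull_T B^{k+1}(T)$ coincide if and only if their $\pull_T$-preimages in $B^{k+1}(T)$ coincide (apply $\pull_T$ and use surjectivity). Therefore $\ker\sfd^{k+1}=\ran\sfd^k$ holds if and only if $\ker(\rmd)=\ran(\rmd)$ at $\pull_T B^{k+1}(T)$, which is exactly the asserted equivalence. I do not expect a genuine obstacle; the only step requiring care is the bookkeeping of admissibility when identifying $\ran\sfd^k$, namely checking that the second component is forced to vanish and that the first component ranges over the \emph{full} preimage rather than a proper subset. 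If a more conceptual presentation is preferred, one may instead note that $(v_0,v_1)\mapsto \pull_T v_0$ is a surjective cochain map $A^\bullet(T)\to \pull_T B^\bullet(T)$ whose kernel, in degree $k$, is $K^k\oplus K^{k+1}$ with $K^k=\ker(\pull_T|_{B^k(T)})$ and differential $(a,b)\mapsto (b,0)$; this kernel complex is acyclic, so the long exact cohomology sequence yields $H^\bullet(A^\bullet(T))\cong H^\bullet(\pull_T B^\bullet(T))$ and hence the index-wise equivalence at once.
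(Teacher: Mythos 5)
Your proposal is correct. Your main argument is, at bottom, the paper's own two-way chase reorganized into a single set identity: you characterize $\ker\sfd^{k+1}$ and $\ran\sfd^k$ as sets of pairs $(w,0)$ whose first components are the full $\pull_T$-preimages in $B^{k+1}(T)$ of $\ker\bigl(\rmd\colon \pull_T B^{k+1}(T)\to\pull_T B^{k+2}(T)\bigr)$ and $\ran\bigl(\rmd\colon \pull_T B^{k}(T)\to\pull_T B^{k+1}(T)\bigr)$, and then transfer equality of subsets across the surjection $\pull_T\colon B^{k+1}(T)\to\pull_T B^{k+1}(T)$ via $S=\pull_T(\pull_T^{-1}S)$. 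The paper instead proves the two implications separately, and in each direction its key move is exactly your surjectivity step (lifting $v_0'\in\pull_T B^k(T)$ to $v_0\in B^k(T)$, respectively $v_1'$ to $v_1$), so the mathematical content coincides; your packaging merely makes both directions fall out of one computation, and you correctly isolate the only delicate point, namely that the first components of $\ran\sfd^k$ exhaust the full preimage --- which holds because a witness $v_0$ with $\rmd\,\pull_T v_0=\pull_T w$ automatically makes $(v_0,w)$ admissible. Your closing alternative, by contrast, is genuinely different from the paper: exhibiting $(v_0,v_1)\mapsto\pull_T v_0$ as a surjective cochain map $A^\bullet(T)\to\pull_T B^\bullet(T)$ whose kernel $K^\bullet\oplus K^{\bullet+1}$, with differential $(a,b)\mapsto(b,0)$, is acyclic (a cone on the identity of $K^\bullet$), and invoking the long exact sequence. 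That route buys the stronger conclusion $\rmH^k(A^\bullet(T))\cong\rmH^k(\pull_T B^\bullet(T))$ at all indices simultaneously, at the cost of homological machinery; note that its surjectivity verification uses the standing hypothesis $\rmd\,\pull_T B^k(T)\subseteq\pull_T B^{k+1}(T)$, which is also precisely where that hypothesis enters the elementary argument.
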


\begin{proof}
\tpoint Suppose the second sequence is exact.

Given an admissible $(v_1,0)\in A^{k+1}(T)$ we have $\rmd \pull_T v_1 = 0$. Choose $v_0' \in \pull_T B^k(T)$ such that $\rmd v_0' = v_1$ and then $v_0 \in B^k(T)$ such that $\pull_T v_0 = v_0'$. Then $(v_0,v_1)$ is admissible and maps to $(v_1,0)$. 

\tpoint Suppose the first sequence is exact.

Suppose $v_1' \in \pull_T B^{k+1}(T)$ satisfies $\rmd v_1'=0$. Choose $v_1 \in B^{k+1}(T)$ such that $\pull_T v_1 = v_1'$. Then $(v_1,0) \in B^{k+1}(T)$ and $\sfd (v_1,0) = 0$. Writing $(v_1,0) = \sfd (v_0,v_1)$ we get $v_0 \in B^{k}(T)$ such that $(v_0, v_1)$ is admissible. Then $v_0' = \pull_T v_0 \in \pull_T B^{k}(T) $ satisfies $\rmd v_0' = v_1'$.
\end{proof}

\section{Poincar\'e and Koszul operators.\label{sec:poinc}}

\paragraph{Poincar\'e operators.}
We recall some properties of the so-called Poincar\'e and Koszul operators, used for constructing finite element differential forms in  \cite{Hip99} and \cite{ArnFalWin06} respectively. For the former, we refer to \cite{Lan99}, especially chapter V, but recall the main steps of interest to us.

Recall that when $S$ and $S'$ are domains and $\Phi : S \to S'$ is differentiable, the pullback of a $k$-form $u$ on $S'$, by $\Phi$, is the $k$-form $\Phi^\star u$ on $S$ defined at $x \in S$ by:
\begin{equation}\label{eq:pullbackdef}
(\Phi^\star u)_x (\xi_1, \ldots, \xi_k) = u_{\Phi(x)}(\rmD \Phi(x) \xi_1, \ldots, \rmD \Phi(x) \xi_k).
\end{equation}

Suppose now that $S$ is a domain. We consider a smooth map $F:[0,1] \times S \to S$, and interpret it as a family of maps $F_t = F(t,\cdot): S \to S$, for $t \in [0,1]$, defining a homotopy between $F_0$ and $F_1$. We write:
\begin{equation}\label{eq:intder}
F_1^\star u - F_0^\star u = \int_0^1 \partial_t (F_t^\star u) \rmd t.
\end{equation}
For most $t \in [0,1]$, we suppose we have a vector field $G_t$ on $S$ such that, for $x \in S$:
\begin{equation}
G_t(F_t(x)) = \partial_t F_t(x).
\end{equation}
This uniquely defines $G_t$ on $S$ when $F_t : S \to S$ is a diffeomorphism, and expresses that any curve $F_\bs(x)$ flows with $G_\bs$.

When $u$ is a $k$-form we have:
\begin{align}
\partial_t (F_t^\star u) &= F_t^\star \lie_{G_t} u,\\
& = F_t^\star ((\rmd u) \ctr G_t + \rmd (u \ctr G_t)),
\end{align}
using Cartan's formula for the Lie derivative.

The Poincar\'e operator associated with $F$ (and $G$), acting on differential $k$-forms, is denoted $\poincare[F]$ or, when the choice of $F$ is clear, as $\poincare$. It can be written succintly:
\begin{equation}
\poincare[F] u= \int_0^1 F_t^\star (u \ctr G_t) \rmd t.
\end{equation}
More explicitely, if $u$ is a $k$-form:
\begin{equation}
(\poincare[F] u)_x(\xi_2, \ldots, \xi_k) = \int_0^1 u_{F_t(x)}(\partial_tF_t(x), \rmD F_t(x)\xi_2, \ldots, \rmD F_t(x) \xi_k) \rmd t.
\end{equation}
With these considerations in mind, (\ref{eq:intder}) can be expressed with the Poincar\'e operator as follows:
\begin{equation}
F_1^\star u - F_0^\star u = \poincare[F]  \eder u + \eder  \poincare[F] u. 
\end{equation}
Suppose that $F_1$ is the identity on $S$ and that $F_0$ is constant. Then the formula gives, for $\poincare = \poincare[F]$ acting on $k$-forms with $k\geq 1$:
\begin{equation}\label{eq:poincare1}
\id = \poincare \eder + \eder \poincare,
\end{equation}
whereas if $u$ is a function, considered as a $0$-form, and the value of $F_0$ is $W$, we get:
\begin{equation}\label{eq:poincare2}
u - u(W) = \poincare \eder u.
\end{equation}
If now $S$ is a domain in an affine space, which is starshaped with respect to, say $W$, we may choose $F$ to be defined by:
\begin{equation}
F_t(x) = tx + (1-t)W.
\end{equation}
Then we may substitute in the above formulas:
\begin{equation}
\partial_t F_t(x) = x - W, \ G_t(y) = \frac{1}{t} (y - W), \textrm{ and } \rmD F_t(x) (\xi) = t \xi.
\end{equation}
We denote the associated Poincar\'e operator as $\poincare_W$. It is defined explicitely on $k$-forms $u$ by:
\begin{equation}\label{eq:poinexpl}
(\poincare_W u)_x(\xi_2 \ldots, \xi_k) = \int_0^1 t^{k-1} u_{W + t(x-W)} (x - W, \xi_2, \ldots, \xi_k) \rmd t.
\end{equation}

\paragraph{Koszul operators.}

In an affine space, given a choice of a point $W$, we may also define directly a vector field $X_W$ by:
\begin{equation}
X_W \ : \  x \mapsto x- W.
\end{equation}
The contraction of a differential form by $X_W$ is called the Koszul operator associated with $W$ and denoted:
\begin{equation}
\koszul_W \ : \  u \mapsto \koszul_W  u = u \ctr X_W.
\end{equation}
If the choice of $W$ is clear from the context, we may sometimes omit it from the notation.

If $u$ is a $k$-form which, with respect to some choice of origin $W$ and basis, has components which are homogeneous polynomials of degree $r$, then from (\ref{eq:poinexpl}) we get:
\begin{equation}
\poincare_W u = \frac{1}{ k + r} \koszul_W u,
\end{equation}
which is polynomial and whose components are homogeneous of degree $r+1$.

We are mainly interested in identitites (\ref{eq:poincare1},\ref{eq:poincare2}) and knowing that the Poincar\'e operator maps polynomials to polynomials, increasing degree by only one. Sometimes explicit computations are more handy with the Koszul operator. For composite elements it will be important where we locate $W$, so as to respect the refinement used.

\begin{remark}
From the above discussion of Poincar\'e operators, we can derive the identity, on $k$-forms which are homogeneous polynomials of degree $r$:
\begin{align}
(\eder \koszul_W + \koszul_W \eder) u & = (r+k) \eder \poincare_W u + (r-1 + k +1) \poincare_W \eder u,\\
& = (r+k) u.
\end{align}
It is obtained by two somewhat different techniques in section 3.2 of \cite{ArnFalWin06}.
\end{remark}

\paragraph{Complexes constructed with Poincar\'e and Koszul operators.}
We suppose we have a  complex $U^\bs$ (of perhaps infinite dimensional spaces):

\begin{equation}
\xymatrix{
\ldots \ar[r]^{\rmd_{k-2}} & U^{k-1} \ar[r]^{\rmd_{k-1}} & U^k \ar[r]^{\rmd_k} & U^{k+1} \ar[r]^{\rmd_{k+1}} & \ldots
}
\end{equation}
 
We also suppose that we have operators $\poincare_k : U^k \to U^{k-1}$ such that:
\begin{equation}\label{eq:homz}
\poincare_{k+1} \eder_k + \eder_{k-1} \poincare_{k} = \lambda_k \id_{U^k},
\end{equation}
where $\lambda_k$ is a non-zero scalar. It follows that the complex $U^\bs$ is exact.

We suppose furthermore that:
\begin{equation}\label{eq:pzz}
\poincare_{k-1} \poincare_{k} = 0.
\end{equation}

In this situation we suppose that we have subspaces $V^\bs$ that form a complex:
\begin{equation}
\xymatrix{
\ldots \ar[r]^{\rmd_{k-2}} & V^{k-1} \ar[r]^{\rmd_{k-1}} & V^k \ar[r]^{\rmd_k} & V^{k+1} \ar[r]^{\rmd_{k+1}} & \ldots
}
\end{equation}

We then define:
\begin{equation}
W^k = V^k + \poincare_{k+1} V^{k+1}.
\end{equation}

\begin{proposition}
The spaces $W^\bs$ form an exact complex. We have:
\begin{equation}\label{eq:dpw}
W^k = \rmd_{k-1} W^{k-1} \oplus \poincare_{k+1} W^{k+1}.
\end{equation}

\end{proposition}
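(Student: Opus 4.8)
The plan is to verify three things in order: that the spaces $W^\bullet$ really form a subcomplex, that the sum in (\ref{eq:dpw}) is direct and exhausts $W^k$, and that exactness then follows almost for free. The only ingredients are the homotopy identity (\ref{eq:homz}), the nilpotency (\ref{eq:pzz}), the relations $\rmd_k\rmd_{k-1}=0$, and the fact that $V^\bullet$ is closed under $\rmd$. First I would check $\rmd_k W^k\subseteq W^{k+1}$. Writing a generic element of $W^k$ as $v+\poincare_{k+1}v'$ with $v\in V^k$, $v'\in V^{k+1}$, the term $\rmd_k v$ lies in $V^{k+1}\subseteq W^{k+1}$, while $\rmd_k\poincare_{k+1}v'$ is rewritten by (\ref{eq:homz}) at index $k+1$ as $\lambda_{k+1}v'-\poincare_{k+2}\rmd_{k+1}v'$, whose first summand is in $V^{k+1}$ and whose second is in $\poincare_{k+2}V^{k+2}$, both inside $W^{k+1}$.

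The heart is the decomposition (\ref{eq:dpw}). For the inclusion of $W^k$ into the sum, I would apply (\ref{eq:homz}) to an arbitrary $w\in W^k$, giving $\lambda_k w=\poincare_{k+1}\rmd_k w+\rmd_{k-1}\poincare_k w$. Since $W^\bullet$ is already a complex, $\rmd_k w\in W^{k+1}$, so the first summand lies in $\poincare_{k+1}W^{k+1}$. For the second summand the crucial point is $\poincare_k w\in W^{k-1}$: writing again $w=v+\poincare_{k+1}v'$ we get $\poincare_k w=\poincare_k v+\poincare_k\poincare_{k+1}v'=\poincare_k v\in\poincare_k V^k\subseteq W^{k-1}$, where the middle term is killed precisely by (\ref{eq:pzz}). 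Dividing by $\lambda_k\neq 0$ yields $w\in\rmd_{k-1}W^{k-1}+\poincare_{k+1}W^{k+1}$.

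For directness, suppose $x=\rmd_{k-1}a=\poincare_{k+1}b$ with $a\in W^{k-1}$ and $b\in W^{k+1}$. Applying $\rmd_k$ to the first expression gives $\rmd_k x=0$, while applying $\poincare_k$ to the second gives $\poincare_k x=\poincare_k\poincare_{k+1}b=0$ by (\ref{eq:pzz}). Substituting both into (\ref{eq:homz}) gives $\lambda_k x=\poincare_{k+1}\rmd_k x+\rmd_{k-1}\poincare_k x=0$, hence $x=0$, so the sum is direct and (\ref{eq:dpw}) holds. The very same computation delivers exactness: if $w\in W^k$ satisfies $\rmd_k w=0$, I would decompose $w=\rmd_{k-1}a+\poincare_{k+1}b$ and set $y:=\poincare_{k+1}b$; then $\rmd_k y=\rmd_k w-0=0$ and $\poincare_k y=0$ by (\ref{eq:pzz}), so (\ref{eq:homz}) forces $\lambda_k y=0$ and thus $y=0$. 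Therefore $w=\rmd_{k-1}a\in\rmd_{k-1}W^{k-1}$, establishing $\ker(\rmd_k)\cap W^k=\rmd_{k-1}W^{k-1}$, which is exactness at $W^k$.

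I do not expect a genuine obstacle here; the argument is a purely algebraic manipulation of the two hypotheses. The one point requiring care is index bookkeeping, so that (\ref{eq:pzz}) is invoked exactly where one $\poincare$ meets another. This is what simultaneously makes $\poincare_k$ map $W^k$ into $W^{k-1}$, annihilates the cross term in the decomposition, and kills the $\poincare$-component in both the directness and the exactness arguments; once this is tracked correctly, every step is forced.
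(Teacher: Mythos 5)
Your proof is correct and follows essentially the same route as the paper: show $\rmd$ and $\poincare$ both preserve $W^\bullet$ (the latter via (\ref{eq:pzz})), so that the homotopy identity (\ref{eq:homz}) holds on $W^\bullet$ and yields the decomposition, directness, and exactness. The only differences are cosmetic --- you derive exactness through the decomposition rather than directly from $\lambda_k w = \rmd_{k-1}\poincare_k w$ when $\rmd_k w=0$, and your index bookkeeping (the constant $\lambda_{k+1}$ in $\rmd_k\poincare_{k+1}v'=\lambda_{k+1}v'-\poincare_{k+2}\rmd_{k+1}v'$) is in fact more careful than the paper's.
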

\begin{proof}
\tpoint We notice that for $u \in V^{k+1}$ we have:
\begin{align}
\eder_k \poincare_{k+1} u & = \lambda_k u - \poincare_{k+2} \eder_{k+1} u,\\
& \in V^{k+1} + \poincare_{k+2}V^{k+2} = W^{k+1}.
\end{align}
Therefore $\rmd_{k}$ maps $W^k$ to $W^{k+1}$.

\tpoint We also see that $\poincare_k$ maps $W^k$ to $W^{k-1}$, using (\ref{eq:pzz}). Therefore identity (\ref{eq:homz}) also holds for the complex $W^\bs$. It follows that it is exact and that we have:
\begin{equation}
W^k = \rmd_{k-1} W^{k-1} + \poincare_{k+1} W^{k+1}.
\end{equation}
Finally, if $u \in \rmd_{k-1} W^{k-1}  \cap \poincare_{k+1} W^{k+1}$, then $\rmd_k u = 0$ and $\poincare_k u = 0$ so that $u = 0$, also from (\ref{eq:homz}).

\end{proof}

\begin{remark}
The spaces $W^\bs$ form a cochain complex with respect to $\eder_\bs$, but they also form a chain complex with respect to $\poincare_\bs$, and it is exact.
\end{remark}

\paragraph{Examples}

We can take $V^k = \poly^p \difo^k(\bbR^n)$.
Then we get the exact complex of spaces:
\begin{equation}
W^k = \poly^{p+1}_-\difo^k(\bbR^n) =  \poly^p \difo^k(\bbR^n) + \poincare \poly^p \difo^k(\bbR^n).
\end{equation}
This generalizes the first family of N\'ed\'elec-Raviart-Thomas, and $p= 0$ corresponds to Whitney forms.

We can also take $V^k = \poly^{p-k} \difo^k(\bbR^n)$. Since it is stable under $\poincare$ it is exact. This generalizes the second family of N\'ed\'elec-Brezzi-Douglas-Marini.

We now consider the construction of composite elements on a simplicial complex. For instance, on a triangulation, the Cloch-Tocher split consists in adding one point to each triangle, and join it with the three vertices, so that each triangle is divided into three smaller triangles.

More generally one can consider a simplicial complex where each simplex is included in an $n$-dimensional simplex. We suppose that we add an inpoint to each $n$-dimensional simplex and join it to the vertices, and possibly inpoints of boundary simplices. More precisely we suppose here that a simplicial refinement of the $(n-1)$-skeleton is chosen. For each $n$-dimensional simplex $T$ the inpoint $W$ is coned with the refinement of the boundary of $T$. In this paragraph we denote such a refinement by $\calS$.

It is then natural to define finite element spaces on $T$ consisting of piecewise polynomials with respect to $\calS$, using the Poincar\'e operator associated with $W$.

We can then take $V^k(T) = \rmC^0_\rmd \poly^{p-k} \difo^k(\calS)$, consisting of $k$-forms which are piecewise polynomials of degree $p-k$, that are continuous and with continuous exterior derivative. The Poincar\'e operator associated with the inpoint maps
 $V^k(T)$ to $V^{k-1}(T)$ so that we get an exact complex. This construction resembles that of to the second family above. We carry out this construction in dimension $n= 2$ in Section \ref{sec:twodhigh}.

Another construction allows to have different simplicial refinements of $T$ for each index $k$, and resembles that of the first family above. Let's call the refinements of $T$, $\calS_k$.
We can define:
\begin{equation}
K^k(T) = \{ u \in \rmC^0 \poly^{p} \difo^k(\calS_k) : \rmd u = 0 \}.
\end{equation}
These spaces form a complex which is not exact. We can then define the augmented spaces:
\begin{equation}
A^k(T) = K^k(T) + \poincare_W K^{k+1}(T).
\end{equation}
Notice that $A^k(T)$ contains $\poly^p \difo^k(T)$. We carry out a construction of this type in arbitrary dimension $n$, with $p = 1$, in Section \ref{sec:spacehigh}.

We also show how one can branch such spaces into standard Whitney forms, by augmenting the complex:
\begin{equation}
\xymatrix{
\ldots \ar[r]^{\rmd_{k-2}} & K^{k-1} \ar[r]^{\rmd_{k-1}} & K^k \ar[r]^{\rmd_k} & \difo^{k+1} \ar[r]^{\rmd_{k+1}} & \ldots
}
\end{equation}
See in (\ref{eq:adefkbis}) how this leads to a new space at index $k$.

\section{Low order finite element complexes in 2D \label{sec:lowex}}

We proceed to define four complexes based on the Clough-Tocher element.

\paragraph{A complex of regularity $(2,1+,1)$.}

Let $T$ be a triangle with vertices $V_0, V_1$ and $V_2$. Choose a point $W$ in the interior of $T$, and subdivide $T$ into three triangles, by drawing edges from $W$ to $V_0$, $V_1$ and $V_2$. This equips $T$ with a simplicial refinement, which we denote by $\calR$.

The Clough-Tocher element involves a degree of freedom on the edges, which can be taken as the normal derivative at the midpoint. More generally, for each edge $E$, we consider a linear form on one-forms, which evaluates the one-form in a transverse direction $\nu_E$,  at an interior point $W_E$ of the edge. We denote this degree of freedom on $1$-forms as $\mu_E$.

For the following proposition we also refer to Figure \ref{fig:lowone}.
\begin{proposition}\label{prop:lowone}
We have an exact sequence:
\begin{equation}
\xymatrix{
0 \ar[r] & \bbR \ar[r] & \rmC^1 \poly^3 \difo^0(\calR) \ar[r] & \rmC^0_\rmd \poly^2 \difo^1(\calR) \ar[r]  & \rmC^0\poly^1 \difo^2(\calR)  \ar[r] & 0
}
\end{equation}
The spaces are, more explicitely, the following:
\begin{itemize}
\item $\rmC^1 \poly^3 \difo^0(\calR)$ consists of piecewise $\poly^3$ functions, which are of class $\rmC^1(T)$.
\item $\rmC^0_\rmd \poly^2 \difo^1(\calR)$ consists of piecewise $\poly^2$ one-forms which are $\rmC^0(T)$ with exterior derivative in $\rmC^0(T)$.
\item $\rmC^0\poly^1 \difo^2(\calR)$ consists of piecewise $\poly^1$ two-forms, which are $\rmC^0(T)$.
\end{itemize}
Moreover these spaces have the following properties:
\begin{itemize}
\item $\rmC^1 \poly^3 \difo^0(\calR)$ has dimension $12$. Any element $u$ is determined by the following data:
\begin{itemize} 
\item vertices $V$: one DoF for $u(V)$ and two DoFs for $\rmd u(V)$.
\item edges $E$:  one DoF, say $\mu_E (\rmd u)$.
\end{itemize}
\item $\rmC^0_\rmd \poly^2 \difo^1(\calR)$ has dimension $15$. Any element $u$ is determined by:
\begin{itemize}
\item vertices $V$: two DoFs for $u(V)$ and one DoF for $\rmd u (V)$.
\item edges $E$: two DoFs, tranverse and tangential:  $\mu_E(u)$ and $\int_E u$.
\end{itemize}
\item $\rmC^0\poly^1 \difo^2(\calR)$ has dimension $4$. Any element $u$ is determined by:
\begin{itemize}
\item vertices $V$: one DoF for $u(V)$.
\item interior $T$: one DoF, namely the integral $\int_T u$.
\end{itemize}
\end{itemize}
The above degrees of freedom provide commuting interpolators.
\end{proposition}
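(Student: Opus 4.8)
The plan is to establish the exactness first and then leverage it, together with unisolvence at the two ends of the complex, to pin down the middle. The complex is exactly the instance $V^k = \rmC^0_\rmd \poly^{3-k}\difo^k(\calR)$ of the Poincar\'e-operator construction of Section \ref{sec:poinc}: on $0$-forms $\rmC^0_\rmd\poly^3\difo^0 = \rmC^1\poly^3\difo^0$ because a continuous function with continuous differential is $\rmC^1$, and on top forms $\rmd$ vanishes identically so $\rmC^0_\rmd = \rmC^0$. I would check that the Poincar\'e operator $\poincare_W$ attached to the interior point $W$ maps $V^k$ into $V^{k-1}$. The degree bookkeeping is immediate ($\poincare_W$ raises polynomial degree by one and lowers form degree by one); the one point to verify is preservation of the composite continuity. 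This holds because $\calR$ is the cone of $\partial T$ over $W$: for a point $x$ on an interior edge $WV_i$ the integration segment of $\poincare_W$ lies along that edge, and since $u\in\rmC^0\difo^k$ is fully single-valued there (as a form, not merely through its pullback), the value $(\poincare_W u)_x$ is single-valued, hence $\poincare_W u \in \rmC^0$; the $\rmC^0_\rmd$ property then follows from the homotopy identity $\rmd\poincare_W + \poincare_W\rmd = \id$. That identity (valid on $k$-forms for $k\geq 1$, and giving $\poincare_W\rmd u = u - u(W)$ on $0$-forms) yields at once the exactness of $0 \to \bbR \to V^0 \to V^1 \to V^2 \to 0$.

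Next I would treat the two outer spaces. For $V^2 = \rmC^0\poly^1\difo^2(\calR)$ the coefficient of the volume form is a continuous piecewise-affine scalar on the three-split, so this space is isomorphic to the continuous $\poly^1$ Lagrange space on $\calR$, of dimension $4$ (one per vertex of $\calR$, i.e.\ $V_0,V_1,V_2,W$); the DoFs $u(V_i)$ and $\int_T u$ are unisolvent because fixing the three vertex values leaves only a multiple of the hat function at $W$, whose integral is nonzero. For $V^0 = \rmC^1\poly^3\difo^0(\calR)$, the identification of the space and the unisolvence of the listed DoFs (vertex value, vertex gradient, edge transverse derivative) are precisely the classical Hsieh--Clough--Tocher facts, which I would cite from \cite{Cia91}\cite{LaiSch07}, giving $\dim V^0 = 12$. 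Exactness then forces $\dim V^1 = 12 + 4 - 1 = 15$ via the alternating-sum identity $1 - \dim V^0 + \dim V^1 - \dim V^2 = 0$.

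The heart of the matter is unisolvence of the $15$ DoFs on $V^1 = \rmC^0_\rmd\poly^2\difo^1(\calR)$; since the count already matches the dimension, it suffices to show that an element killing all DoFs vanishes, and I would argue by bootstrapping through the complex rather than by a coordinate computation. Suppose $u\in V^1$ satisfies $u(V)=0$, $\rmd u(V)=0$, $\mu_E(u)=0$ and $\int_E u = 0$ at all vertices and edges. Then $\rmd u \in V^2$ has vanishing vertex values (these are exactly the $\rmd u(V)$ data) and $\int_T\rmd u = \int_{\partial T} u = \sum_E \int_E u = 0$ by Stokes, so $\rmd u = 0$ by unisolvence on $V^2$. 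By exactness $u = \rmd\phi$ with $\phi\in V^0$; now $u(V)=0$ gives $\rmd\phi(V)=0$, the condition $\mu_E(u)=0$ gives the transverse derivative $\mu_E(\rmd\phi)=0$, and $\int_E u = 0$ forces equal vertex values $\phi(V_0)=\phi(V_1)=\phi(V_2)$. Subtracting this common constant, $\phi$ kills all Hsieh--Clough--Tocher DoFs, hence is constant, and $u = \rmd\phi = 0$. The coupled $\rmC^0_\rmd$ constraint on $V^1$ is the genuine obstacle here, and this exactness-driven bootstrap is exactly what bypasses it.

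Finally, writing $I^k$ for the interpolator onto $V^k$ defined by these DoFs, I would prove $\rmd I^k = I^{k+1}\rmd$ on smooth forms by showing that $\rmd I^k u$ and $\rmd u$ carry the same $V^{k+1}$-DoFs, whence their interpolants coincide by unisolvence. For $k=0$: the vertex $1$-form value of $\rmd I^0 f$ is the vertex gradient of $I^0 f$, namely $\rmd f(V)$; the transverse datum $\mu_E(\rmd I^0 f)$ is the normal derivative of $I^0 f$ at $W_E$, namely $\mu_E(\rmd f)$; and $\int_E \rmd I^0 f = \int_E \rmd f$ by the fundamental theorem together with matching of vertex values. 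For $k=1$: the vertex value of $\rmd I^1 g$ is the $V^1$-datum $\rmd u(V)$ evaluated at $u=I^1 g$, i.e.\ $\rmd g(V)$, while $\int_T \rmd I^1 g = \sum_E \int_E I^1 g = \int_T\rmd g$ by Stokes and the tangential DoFs. In each case every $V^{k+1}$-DoF of $\rmd I^k u$ agrees with that of $\rmd u$, which is the commuting property. The design features that make this go through---the $V^0$ edge DoF being the normal derivative of $\rmd u$, the $V^1$ vertex DoF $\rmd u(V)$, and the tangential DoF $\int_E u$---are precisely what intertwine $\rmd$ with the DoF structure.
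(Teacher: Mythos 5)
Your proposal is correct and takes essentially the same approach as the paper: exactness via the Poincar\'e operator at the inpoint $W$, citation of classical Hsieh--Clough--Tocher unisolvence for the $0$-form space, a bootstrap through the complex for unisolvence on the $1$-form space (you invoke exactness to write $u = \rmd \phi$ where the paper exhibits the primitive explicitly as $\poincare_W u$ via the homotopy identity --- the same mechanism), and DoF-by-DoF matching for the commuting interpolators. Your derivation of $\dim \rmC^0_\rmd \poly^2 \difo^1(\calR) = 15$ from the alternating sum over the exact sequence, rather than by counting constraints, is a variant the paper itself acknowledges in the parenthetical remark following its unisolvence argument.
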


\begin{proof}
\tpoint Exactness of the complex can be deduced from the Poincar\'e operator associated with the inpoint $W$. It maps the spaces one to the other.\\
Notice by the way that we get the identity:
\begin{align}\label{eq:czdopz}
 \rmC^0_\rmd \poly^2 \difo^1(\calR) & = \rmd \rmC^{1} \poly^{3} \difo^0 (\calR )\oplus \poincare_W \rmC^{0}\poly^{1}\difo^2(\calR),\\
 & \approx \curl \rmC^{1} \poly^{3} (\calR )\oplus X_W \rmC^{0}\poly^{1}(\calR).
\end{align}

\tpoint Counting constraints on the space of piecewise polynomials of degree 3 on $\calR$, shows that the dimension of the first space is at least $30 - 18 = 12$. That the dimension is exactly $12$ follows from proving unisolvence of the DoFs, which is done in particular in \cite{Cia74}\cite{Per76}. 

It amounts to showing that if both $u$ and its derivatives are $0$ on $\partial T$, then $u= 0$. Such a $u$ can be written $\lambda_W^2 v$ where $v \in \rmC^0\poly^1(\calR)$, where $\lambda_W$ is the barycentric coordinate map of $\calR$ associated with the inpoint $W$. We have that:
\begin{equation}
\rmd u = 2 \lambda_W v \rmd \lambda_W + \lambda_W^2 \rmd v.
\end{equation}
Since $u\in \rmC^1(T)$ we get that the following form is continuous on $T$:
\begin{equation}
2 v \rmd \lambda_W + \lambda_W \rmd v.
\end{equation}
Since $\rmd \lambda_W$ is discontinuous at the vertices, the three vertex values of $v$ are $0$, so that $v$ is proportional to $\lambda_W$. Since $\rmd \lambda_W$ is discontinuous at $W$ we deduce $u = 0$.

\tpoint The last space has dimension $4$ and the given degrees of freedom are unisolvent.

\tpoint Counting constraints on the spaces of piecewise polynomial one-forms, shows that the dimension of the second space is at least $36 - 21 = 15$. If $u \in \rmC^0_\rmd \poly^2 \difo^1(\calR)$ has degrees of freedom $0$ we write:
\begin{equation}
u = \poincare_W \eder u + \eder \poincare_W u.
\end{equation}
We notice that $\rmd u \in \rmC^0\poly^1 \difo^2(\calR)$ and has degrees of freedom $0$ so $\rmd u = 0$.
We also notice that $v= \poincare_W u$ satisfies $\rmd v = u$. Its degrees of freedom are $0$ except perhaps the vertex values $v(V)$. They must be the same, because $\int_E \rmd v  = 0$ for each edge $E$. Hence $v$ is constant, so $u = \rmd v =0$.

This proves unisolvence and that the dimension count is exact (the dimension can also be deduced from (\ref{eq:czdopz})).

\tpoint It is straightforward to check that the interpolator associated with these DoFs commutes with the exterior derivative. 
\end{proof}

What remains in order to prove that this is a good finite element, is that \emph{inter}-element continuity behaves as expected. On edges the spaces of restrictions from adjacent triangles should be the same. 

\begin{center}

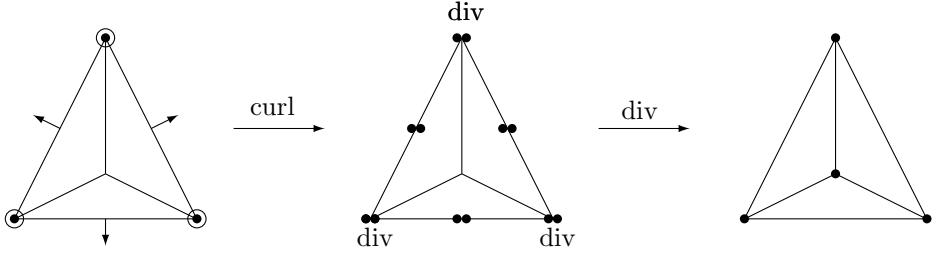
\begin{figure}

\setlength{\unitlength}{1.2cm}
\begin{picture}(2,2)(-0.8,0)
\put(0,0){
\begin{picture}(2,2)
\put(-1, 0){\line(1,2){1}} 
\put(0, 2){\line(1,-2){1}}
\put(-1,0){\line(1,0){2}}
\put(-1.,0){\circle*{0.1}}
\put(-1.,0){\circle{0.2}}
\put(1.,0){\circle*{0.1}}
\put(1.,0){\circle{0.2}}
\put(0,2){\circle*{0.1}}
\put(0,2){\circle{0.2}}
\put(0, 0.5){\line(0, 1){1.5}}
\put(0, 0.5){\line(2, -1){1}}
\put(0, 0.5){\line(-2, -1){1}}
\put(0.5, 1){\vector(2,1){0.3}}
\put(-0.5, 1){\vector(-2,1){0.3}}
\put(0, 0){\vector(0,-1){0.3}}
\end{picture}
}

\put(1.5, 1){\vector(1, 0){1}}
\put(1.68, 1.15){$\curl$}

\put(4,0){
\put(-1, 0){\line(1,2){1}} 
\put(0, 2){\line(1,-2){1}}
\put(-1,0){\line(1,0){2}}
\put(-0.05,2){\circle*{0.1}}
\put(0.05,2){\circle*{0.1}}
\put(-0.15,2.2){$\div$}
\put(-0.15,2.2){$\div$}
\put(-1.15,-0.3){$\div$}
\put(0.85,-0.3){$\div$}

\put(-1.05,0){\circle*{0.1}}
\put(-0.95,0){\circle*{0.1}}
\put(-1.05,0){\circle*{0.1}}
\put(-0.95,0){\circle*{0.1}}
\put(1.05,0){\circle*{0.1}}
\put(0.95,0){\circle*{0.1}}
\put(0.55, 1){\circle*{0.1}}
\put(0.45, 1){\circle*{0.1}}
\put(-0.55, 1){\circle*{0.1}}
\put(-0.45, 1){\circle*{0.1}}

\put(-0.05, 0){\circle*{0.1}}
\put(0.05, 0){\circle*{0.1}}
\put(0, 0.5){\line(0, 1){1.5}}
\put(0, 0.5){\line(2, -1){1}}
\put(0, 0.5){\line(-2, -1){1}}
}

\put(5.5, 1){\vector(1, 0){1}}
\put(5.75, 1.1){{$\div$}}

\put(8,0){
\begin{picture}(2,2)
\put(-1, 0){\line(1,2){1}} 
\put(0, 2){\line(1,-2){1}}
\put(-1,0){\line(1,0){2}}
\put(0, 0.5){\line(0, 1){1.5}}
\put(0, 0.5){\line(2, -1){1}}
\put(0, 0.5){\line(-2, -1){1}}

\put(-1,0){\circle*{0.1}}
\put(1,0){\circle*{0.1}}
\put(0,2){\circle*{0.1}}
\put(0,0.5){\circle*{0.1}}

\end{picture}
}

\end{picture}
\caption{\label{fig:lowone} Clough-Tocher complex with continuous pressure described in Proposition \ref{prop:lowone}.
}
\end{figure}

\end{center}

\begin{remark}
The space $\rmC^0_\rmd \poly^2 \difo^1(\calR)$ is also described in \cite{AlfSor16}, where it is analysed with Bernstein-Bezier techniques. Their definition incorporates the fact that an element of $\rmC^0_\rmd \poly^2 \difo^1(\calR)$ is automatically $\rmC^1$ at the inpoint $W$.
\end{remark}

\paragraph{A minimal complex of regularity $(2,1+,1)$.}

It is also possible, in the previous example, to eliminate the edge degrees of freedom in $\rmC^1 \poly^3 \difo^0(\calR)$, by requiring $\rmd u \ctr \nu_E$ to be affine on edge $E$. Usually one imposes the normal derivative on edges to be affine. This is called the \emph{reduced} HCT element. The transverse edge degree of freedom in $\rmC^0_\rmd \poly^2 \difo^1(\calR)$  is then also eliminated by requiring $u \ctr \nu_E$ to be affine. See Figure \ref{fig:lowonemin}.

The natural degrees of freedom provide a commuting interpolator.

\begin{remark}
We see that we have as many degrees of freedom left for the space of $0$-forms (namely three times the number of vertices) as for the space of $2$-forms (namely the number of vertices plus number of triangles), up to the Euler-Poincar\'e characteristic of the surface. This can be interpreted as a balancing of the degrees of freedom describing the curl and the divergence of the vector fields.
\end{remark}

This complex is minimal, among complexes with this regularity, in a sense which can be made precise in the framework of finite element systems. This is described below, in the last paragraph of \S \ref{sec:gfes}.

\begin{center}
\begin{figure}

\setlength{\unitlength}{1.2cm}
\begin{picture}(2,2)(-0.8,0)
\put(0,0){
\begin{picture}(2,2)
\put(-1, 0){\line(1,2){1}} 
\put(0, 2){\line(1,-2){1}}
\put(-1,0){\line(1,0){2}}
\put(-1.,0){\circle*{0.1}}
\put(-1.,0){\circle{0.2}}
\put(1.,0){\circle*{0.1}}
\put(1.,0){\circle{0.2}}
\put(0,2){\circle*{0.1}}
\put(0,2){\circle{0.2}}
\put(0, 0.5){\line(0, 1){1.5}}
\put(0, 0.5){\line(2, -1){1}}
\put(0, 0.5){\line(-2, -1){1}}
\end{picture}
}

\put(1.5, 1){\vector(1, 0){1}}
\put(1.68, 1.15){$\curl$}

\put(4,0){
\put(-1, 0){\line(1,2){1}} 
\put(0, 2){\line(1,-2){1}}
\put(-1,0){\line(1,0){2}}
\put(-0.05,2){\circle*{0.1}}
\put(0.05,2){\circle*{0.1}}
\put(-0.15,2.2){$\div$}
\put(-0.15,2.2){$\div$}
\put(-1.15,-0.3){$\div$}
\put(0.85,-0.3){$\div$}

\put(-1.05,0){\circle*{0.1}}
\put(-0.95,0){\circle*{0.1}}
\put(-1.05,0){\circle*{0.1}}
\put(-0.95,0){\circle*{0.1}}
\put(1.05,0){\circle*{0.1}}
\put(0.95,0){\circle*{0.1}}
\put(-0.5, 1){\vector(-2, 1){0.3}}
\put(0.5, 1){\vector(2, 1){0.3}}

\put(0, 0){\vector(0, -1){0.3}}
\put(0, 0.5){\line(0, 1){1.5}}
\put(0, 0.5){\line(2, -1){1}}
\put(0, 0.5){\line(-2, -1){1}}
}

\put(5.5, 1){\vector(1, 0){1}}
\put(5.75, 1.1){{$\div$}}

\put(8,0){
\begin{picture}(2,2)
\put(-1, 0){\line(1,2){1}} 
\put(0, 2){\line(1,-2){1}}
\put(-1,0){\line(1,0){2}}
\put(0, 0.5){\line(0, 1){1.5}}
\put(0, 0.5){\line(2, -1){1}}
\put(0, 0.5){\line(-2, -1){1}}

\put(-1,0){\circle*{0.1}}
\put(1,0){\circle*{0.1}}
\put(0,2){\circle*{0.1}}
\put(0,0.5){\circle*{0.1}}

\end{picture}
}

\end{picture}
\caption{\label{fig:lowonemin} Minimal Clough-Tocher complex with continuous pressure.}
\end{figure}
\end{center}

\paragraph{A complex of regularity $(2,1,0)$.}

We may also consider the sequence:
\begin{equation}
\xymatrix{
0 \ar[r] & \bbR \ar[r] & \rmC^1 \poly^3 \difo^0(\calR) \ar[r] & \rmC^0 \poly^2 \difo^1(\calR) \ar[r]  & \poly^1 \difo^2(\calR)  \ar[r] & 0
}
\end{equation}
The spaces are, more explicitely, the following:
\begin{itemize}
\item $\rmC^1 \poly^3 \difo^0(\calR)$ consists of piecewise $\poly^3$ functions, which are of class $\rmC^1(T)$.
\item $\rmC^0 \poly^2 \difo^1(\calR)$ consists of piecewise $\poly^2$ one-forms which are $\rmC^0(T)$.
\item $\poly^1 \difo^2(\calR)$ consists of piecewise $\poly^1$ two-forms.
\end{itemize}
The second space has dimension 20. The last one has dimension 9. Exactness follows from using the Poincar\'e operator at the inpoint $W$. A preliminary reasoning shows that $\rmC^0 \poly^2 \difo^1(\calR)$ should have 2 degrees of freedom per vertex, 2 per edge and 8 interior ones. However it is not clear what they should be, if one wants the interpolator to commute with the exterior derivative. A part from a choice of degrees of freedom adapted to Stokes, these spaces are well known.

\begin{center}
\begin{figure}
\setlength{\unitlength}{1.2cm}
\begin{picture}(2,2)(-0.8,0)
\put(0,0){
\begin{picture}(2,2)
\put(-1, 0){\line(1,2){1}} 
\put(0, 2){\line(1,-2){1}}
\put(-1,0){\line(1,0){2}}
\put(-1.,0){\circle*{0.1}}
\put(-1.,0){\circle{0.2}}
\put(1.,0){\circle*{0.1}}
\put(1.,0){\circle{0.2}}
\put(0,2){\circle*{0.1}}
\put(0,2){\circle{0.2}}
\put(0, 0.5){\line(0, 1){1.5}}
\put(0, 0.5){\line(2, -1){1}}
\put(0, 0.5){\line(-2, -1){1}}
\put(0.5, 1){\vector(2,1){0.3}}
\put(-0.5, 1){\vector(-2,1){0.3}}
\put(0, 0){\vector(0,-1){0.3}}
\end{picture}
}

\put(1.5, 1){\vector(1, 0){1}}
\put(1.68, 1.15){$\curl$}

\put(4,0){
\put(-1, 0){\line(1,2){1}} 
\put(0, 2){\line(1,-2){1}}
\put(-1,0){\line(1,0){2}}
\put(-0.05,2){\circle*{0.1}}
\put(0.05,2){\circle*{0.1}}
\put(-1.05,0){\circle*{0.1}}
\put(-0.95,0){\circle*{0.1}}
\put(-1.05,0){\circle*{0.1}}
\put(-0.95,0){\circle*{0.1}}
\put(1.05,0){\circle*{0.1}}
\put(0.95,0){\circle*{0.1}}
\put(0.55, 1){\circle*{0.1}}
\put(0.45, 1){\circle*{0.1}}
\put(-0.55, 1){\circle*{0.1}}
\put(-0.45, 1){\circle*{0.1}}

\put(-0.05, 1){\circle*{0.1}}
\put(0.05, 1){\circle*{0.1}}
\put(-0.05, 0){\circle*{0.1}}
\put(0.05, 0){\circle*{0.1}}
\put(0, 0.5){\line(0, 1){1.5}}
\put(0, 0.5){\line(2, -1){1}}
\put(0, 0.5){\line(-2, -1){1}}
\put(-0.45, 0.25){\circle*{0.1}}
\put(-0.55, 0.25){\circle*{0.1}}
\put(0.45, 0.25){\circle*{0.1}}
\put(0.55, 0.25){\circle*{0.1}}
\put(0.05, 0.5){\circle*{0.1}}
\put(-0.05, 0.5){\circle*{0.1}}
}

\put(5.5, 1){\vector(1, 0){1}}
\put(5.75, 1.1){{$\div$}}

\put(8,0){
\begin{picture}(2,2)
\put(-1, 0){\line(1,2){1}} 
\put(0, 2){\line(1,-2){1}}
\put(-1,0){\line(1,0){2}}
\put(0, 0.5){\line(0, 1){1.5}}
\put(0, 0.5){\line(2, -1){1}}
\put(0, 0.5){\line(-2, -1){1}}
\put(-0.2, 0.2){+3}
\put(-0.4, 0.8){+3}
\put(0.05, 0.8){+3}
\end{picture}
}

\end{picture}
\caption{Clough-Tocher complex with discontinuous pressure.\newline
The figure shows the lowest order case: the first space is piecewise cubic, the second is continuous piecewise quadratic and the third is piecewise linear. }
\end{figure}
\end{center}

\paragraph{A minimal complex of regularity $(2,1,0)$.}

In the last example, the spaces are bigger than necessary. A smaller complex of the form:
\begin{equation}
\xymatrix{
0 \ar[r] & \bbR \ar[r] & A^0(T) \ar[r] & A^1(T) \ar[r]  & A^2(T)  \ar[r] & 0
}
\end{equation}
may  be defined as follows. The spaces are:
\begin{itemize}
\item $A^0(T)$ is reduced HCT, of dimension 9.\\
The DoFs are vertex values and vertex values of the exterior derivative.
\item $A^1(T) = \rmd A^0(T) + \poincare_W A^2(T) \approx \curl A^0(T) + \bbR X_W $, of dimension 9. \\
The degrees of freedom are, at vertices two for the value of the 2-form, and at edges one for the integral.

\item $A^2(T) = \poly^0 \difo^2(T)$ consists of constant 2-forms on $T$, of dimension 1.\\
The degree of freedom is the integral.
\end{itemize}

These degrees of freedom provide a commuting interpolator. This complex is minimal, among complexes with this regularity, by the remarks that will be made in the last paragraph of \S \ref{sec:gfes}.

\begin{remark}\label{rem:gn14a}
In \cite{GuzNei14a} a complex of regularity $(2,1,0)$ equipped with commuting interpolators is also defined. Instead of resolving the Clough-Tocher element (full or reduced) like ours, their complex resolves a $\rmC^1$ element due to Zienkiewicz that contains rational functions. The dimensions of their three spaces is $(12,12,1)$, which is intermediate between our minimal complex, with dimensions $(9,9,1)$ and the previous complex, with dimensions $(12,20,9)$.

They also define high order versions of their complex.
\end{remark}

\begin{center}
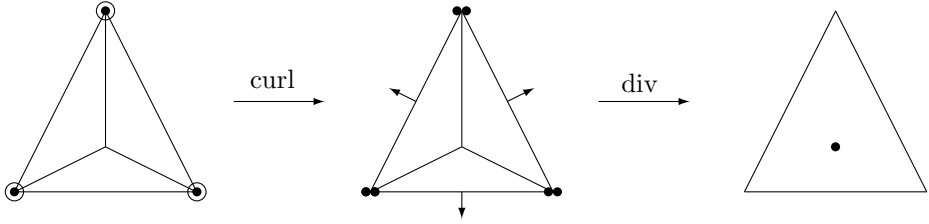
\begin{figure}
\setlength{\unitlength}{1.2cm}
\begin{picture}(2,2)(-0.8,0)
\put(0,0){
\begin{picture}(2,2)
\put(-1, 0){\line(1,2){1}} 
\put(0, 2){\line(1,-2){1}}
\put(-1,0){\line(1,0){2}}
\put(-1.,0){\circle*{0.1}}
\put(-1.,0){\circle{0.2}}
\put(1.,0){\circle*{0.1}}
\put(1.,0){\circle{0.2}}
\put(0,2){\circle*{0.1}}
\put(0,2){\circle{0.2}}
\put(0, 0.5){\line(0, 1){1.5}}
\put(0, 0.5){\line(2, -1){1}}
\put(0, 0.5){\line(-2, -1){1}}
\end{picture}
}

\put(1.5, 1){\vector(1, 0){1}}
\put(1.68, 1.15){$\curl$}

\put(4,0){
\put(-1, 0){\line(1,2){1}} 
\put(0, 2){\line(1,-2){1}}
\put(-1,0){\line(1,0){2}}
\put(-0.05,2){\circle*{0.1}}
\put(0.05,2){\circle*{0.1}}
\put(-1.05,0){\circle*{0.1}}
\put(-0.95,0){\circle*{0.1}}
\put(-1.05,0){\circle*{0.1}}
\put(-0.95,0){\circle*{0.1}}
\put(1.05,0){\circle*{0.1}}
\put(0.95,0){\circle*{0.1}}

\put(0, 0.5){\line(0, 1){1.5}}
\put(0, 0.5){\line(2, -1){1}}
\put(0, 0.5){\line(-2, -1){1}}

\put(0.5, 1){\vector(2,1){0.3}}
\put(-0.5, 1){\vector(-2,1){0.3}}
\put(0, 0){\vector(0,-1){0.3}}
}

\put(5.5, 1){\vector(1, 0){1}}
\put(5.75, 1.1){{$\div$}}

\put(8,0){
\begin{picture}(2,2)
\put(-1, 0){\line(1,2){1}} 
\put(0, 2){\line(1,-2){1}}
\put(-1,0){\line(1,0){2}}
\put(0, 0.5){\circle*{0.1}}

\end{picture}
}

\end{picture}
\caption{Minimal Clough-Tocher complex with discontinuous pressure.
}
\end{figure}
\end{center}

\section{Generalized Finite Element Systems\label{sec:gfes}}

\paragraph{Motivation for finite element systems.}
To study the examples of the preceding section, some general theorems make the task easier. Moreover, specifying the degrees of freedom a priori can be difficult when one wants to go to higher order polynomials.

If we are given spaces $A^k(T)$ of $k$-forms on a cell $T$, we can actually forget about degrees of freedom and just consider the spaces $A^k(T')$ obtained by restriction to the faces $T'$ of $T$, with the appropriate definition of restriction, adapted to a particular regularity. Two properties turn out to be sufficient, in order to get a nice finite element:
\begin{itemize}
\item $\dim A^k(T) = \sum_{T'\subcell T} \dim A^k_0(T')$. Here, as will be detailed below, $T' \subcell T$ signifies that $T'$ is a subcell of $T$ and $A^k_0(T')$ denotes the subset of $A^k(T')$ consisting of $k$-forms whose restrictions to boundary subcells of $T'$ are $0$.
\item The sequence $A^\bs(T')$ is exact on each subcell $T'$ of $T$, except at index $0$, where the cohomology group has dimensions $1$, essentially consisting of the constant functions.
\end{itemize}
When these properties are satisfied we will show that the sequences $A^\bs_0(T')$ are exact except at index $\dim T'$, where the cohomology group has dimension $1$. Then one can define a commuting interpolator by using the so-called harmonic degrees of freedom, described below.

In a cellular complex the spaces $A^k(T')$ defined on faces should be well defined, in the sense that if they are obtained as the spaces of restrictions from a cell $T$ (containing $T'$) to $T'$, then they should be independent of $T$.

\paragraph{Definitions related to finite element systems.}

Let $\calT$ be a cellular complex. If $T, T'$ are cells in $\calT$ we write $T' \subcell T$ to signify that $T'$ is a subcell of $T$ (we consider that $T$ is a subcell of $T$). Given two cells $T$ and $T'$ in $\calT$, their relative orientation is denoted $\orient(T,T')$. It is $0$ unless $T'$ is a codimension one subcell of $T$, in which case it is $\pm 1$. Cellular cochain complex is denoted $\calC^\bs(\calT)$. Its differential, also called the coboundary map, is denoted $\delta: \calC^k(\calT) \to \calC^{k+1}(\calT)$. Its matrix in the canonical basis is given by relative orientations.

All complexes considered in this paper are cochain complexes in the sense that the differential increases the index.

\begin{definition} A \emph{finite element system} on $\calT$ consists of the following data, which includes both spaces and operators:

\begin{itemize}
\item We suppose that for each $T \in \calT$, and each $k \in \bbZ$ we are given a vector space $A^k(T)$. For $k<0$ we suppose $A^k(T) = 0$.

\item For every $T \in \calT$ and $k \in \bbZ$, we have an operator $\diff^k_T: A^k(T)  \to A^{k+1}(T)$ called \emph{differential}. Often we will denote it just as $\diff$. We require $\diff^{k+1}_T \circ \diff^k_T = 0$. This makes $A^\bs(T)$ into a complex.
\item  Given $T, T'$ in $\calT$ with $T' \subcell T$ we suppose we have \emph{restriction} maps:
\begin{equation}
\rest^k_{T'T} : A^k(T) \to A^k(T'),
\end{equation}
subject to:
\begin{itemize}
\item $\rest^{k+1}_{T'T} \diff^k_T = \diff^k_{T'} \rest^k_{T'T}$.
\item $\rest^k_{T''T}  = \rest^k_{T''T'} \rest^k_{T'T}$. 
\end{itemize} 
This makes the family $A^\bs(T)$, for $T \in \calT$,  into an inverse system of complexes.
\item We suppose we have a map $\const_T: \bbR \to A^0(T)$. It mimicks inclusion of constant scalar functions. We require:
\begin{itemize}
\item For $T \in \calT$, $\diff^0_T \const_T = 0$.
\item If $T' \subcell T$ are cells in $\calT$, $\rest_{T'T} \const_T = \const_{T'}$.
\end{itemize}
\item For $T$ a $k$-dimensional cell in $\calT$ we suppose we have an evaluation map $\eval : A^k(T) \to \bbR$. It mimicks integration of $k$-forms on a $k$-cell.
We suppose that the following formula holds, for $u \in A^{k-1}(T)$:
\begin{equation}\label{eq:Stokes}
\eval_T \diff_T u = \sum_{T' \in \partial T} \orient(T, T') \eval_{T'} \rest_{T'T} u.
\end{equation}
It's an analogue of Stokes theorem on $T$.
\end{itemize}
\end{definition}

If $\calT'$ is a cellular subcomplex of $\calT$, the spaces $A^k(T)$ with $T\in \calT'$ constitute an inverse system. The inverse limits can be identified as:
\begin{align}
{\ula \lim}_{T\in \calT'} A^k(T) = \{ & (u_T)_{T \in \calT'} \in \bigoplus_{T \in \calT'} A^k(T) \ : \ T' \subcell T \Rightarrow u_{T'} = \rest_{T'T} u_T \}
\end{align}
In other words ${\ula \lim}_{T\in \calT'} A^k(T)$ consists of families $(u_T)_{T \in \calT'}$, such that for each cell $T \in \calT'$ (of all dimensions) $u_T \in A^k(T)$, and the family is stable under restrictions to subcells. One can consider that such a family is given by a choice of $u_T \in A^k(T)$ on top-dimensional cells $T \in \calT'$, together with their restrictions to subcells, provided that these are single-valued, i.e. the restrictions to a subcell are the same from all top-dimensional neighboring cells.

We notice that, if $T$ is a cell and $\subcells(T)$ denotes the cellular complex consisting of all the subcells of $T$ in $\calT$, then the restriction maps provide an isomorphism:
\begin{equation}
\rest: A^\bs(T) \to {\ula \lim}_{T'\in \subcells (T)} A ^\bs(T').
\end{equation}
For this reason it seems safe to use the notation:
\begin{equation}
 A^\bs(\calT') = {\ula \lim}_{T\in \calT'} A^\bs(T).
\end{equation}
This will be used in particular when $\calT'$ is the boundary of a cell $T\in \calT$. In that case $\partial T$ denotes the cellular complex consisting of the strict subcells of $T$, and $A^k(\partial T)$ can be interpreted as consisting of families of elements $u_{T'} \in A^k(T')$ for $T' \in \partial T$ that are single-valued along interfaces inside the boundary.

Another way of formulating (\ref{eq:Stokes}) is that for any cellular subcomplex $\calT'$, the evaluation provides a cochain morphism:
\begin{equation}
\eval: A^\bs(\calT')\to \calC^\bs(\calT').
\end{equation}
We will later provide conditions under which it induces isomorphisms on cohomology groups, which would be an analogue of de Rham's theorem.

We denote by $A^k_0(T)$ the kernel of the induced map $\rest^k: A^k(T) \to A^k(\partial T)$. We consider that the boundary of a point is empty, so that if $T$ is a point $A^k_0(T) = A^k(T)$.

\begin{definition}
We say that $A$ admits extensions on $T \in \calT$, if the restriction map induces a surjection:
\begin{equation}
\rest^k: A^k(T) \to A^k(\partial T),
\end{equation}
for each $k$. We say that $A$ admit extensions on $\calT$, if it admits extensions on each $T\in \calT$.
\end{definition}

This notion corresponds to that of \emph{flabby sheaves} (\emph{faisceaux flasques} in French \cite{God73}), due to the following.
\begin{proposition}
The FES $A$ admits extensions on $\calT$ if an only if, for any cellular complexes $\calT'', \cal T'$  such that $\calT'' \subseteq \calT' \subseteq \calT$,  the restriction  $A^\bs(\calT') \to A^\bs(\calT'')$ is onto.
\end{proposition}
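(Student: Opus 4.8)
The plan is to prove the two implications separately, the substantive one being that extendability on individual cells propagates to arbitrary nested pairs of subcomplexes. Throughout I read ``onto'' degreewise, i.e. $A^k(\calT') \to A^k(\calT'')$ is surjective for every $k$, matching the degreewise definition of admitting extensions.

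First I would dispatch the ``if'' direction. Fix a cell $T \in \calT$ and apply the hypothesis to the pair $\calT'' = \partial T \subseteq \calT' = \subcells(T)$, both cellular subcomplexes of $\calT$. The restriction isomorphism $\rest : A^\bs(T) \to {\ula \lim}_{T' \in \subcells(T)} A^\bs(T') = A^\bs(\subcells(T))$ identifies the assumed surjection $A^\bs(\subcells(T)) \to A^\bs(\partial T)$ with the restriction $\rest^k : A^k(T) \to A^k(\partial T)$. Hence surjectivity of the former for every $k$ is exactly the statement that $A$ admits extensions on $T$; as $T$ is arbitrary, $A$ admits extensions on $\calT$.

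For the ``only if'' direction, assume $A$ admits extensions on each cell, fix $\calT'' \subseteq \calT' \subseteq \calT$ and a degree $k$, and take a compatible family $(u_T)_{T \in \calT''} \in A^k(\calT'')$. I would build an extension to $A^k(\calT')$ by adjoining the cells of $\calT' \setminus \calT''$ one at a time in order of nondecreasing dimension. Enumerating them as $T_1, T_2, \ldots$ with $\dim T_i$ nondecreasing ensures that each intermediate family $\calT''_i = \calT'' \cup \{T_1, \ldots, T_i\}$ is again a subcomplex: every proper subcell of $T_i$ has strictly smaller dimension, so it either lies in $\calT''$ or occurs earlier in the enumeration, and thus already belongs to $\calT''_{i-1}$. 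When adjoining $T_i$, the already-defined values on the strict subcells of $T_i$ assemble, by the inductive compatibility, into an element of $A^k(\partial T_i)$; the extension hypothesis on $T_i$ then furnishes $u_{T_i} \in A^k(T_i)$ with $\rest^k_{\partial T_i, T_i} u_{T_i}$ equal to that element. Declaring this the new value preserves compatibility, since the only fresh restriction constraints run from $T_i$ down to its faces and hold by construction, while no cell already present can have $T_i$ as a proper subcell (such a cell would have larger dimension, hence not yet be treated, unless it lay in $\calT''$, forcing $T_i \in \calT''$). Note that the whole construction is carried out in the single degree $k$, so no interaction with the differential $\diff$ is required, which is what makes it go through cleanly.

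The step I expect to demand the most care is the passage to the infinite case: when $\calT' \setminus \calT''$ is infinite the naive induction does not terminate, so I would recast the construction through Zorn's lemma. I would order the partial extensions---compatible families defined on intermediate subcomplexes between $\calT''$ and $\calT'$---by inclusion, check that chains admit upper bounds given by their unions (compatibility being a condition testable on finitely many cells at a time), and argue that a maximal element must already be defined on all of $\calT'$: were some cell missing, one of minimal dimension among the missing ones could be adjoined exactly as above, contradicting maximality. This yields the desired surjectivity and completes the equivalence.
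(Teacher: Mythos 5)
Your proof is correct. Note that the paper states this proposition \emph{without} proof, so there is no argument of the authors' to compare against; what you have written supplies the expected one. Both directions check out. For the ``if'' direction you correctly instantiate the hypothesis at the pair $\partial T \subseteq \subcells(T)$ and use the isomorphism $\rest : A^\bs(T) \to A^\bs(\subcells(T))$ that the paper records just before the definition of extensions, so that surjectivity of $A^k(\subcells(T)) \to A^k(\partial T)$ is literally surjectivity of the induced map $A^k(T) \to A^k(\partial T)$. For the ``only if'' direction, the cell-by-cell adjunction in nondecreasing dimension is sound, and you attend to precisely the two points where a careless write-up would fail: that each intermediate family of cells is again a subcomplex (every strict subcell of $T_i$ lies in $\calT'$ and has smaller dimension, hence is already present, so the boundary datum fills all of $\partial T_i$ and the extension property on the single cell $T_i$ applies), and that no cell already adjoined can have $T_i$ as a strict subcell --- a cell of $\calT''$ containing $T_i$ would force $T_i \in \calT''$, while an earlier $T_j$ would have too small a dimension --- so compatibility of the enlarged family only needs checking downward from $T_i$, where it holds by construction. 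You also rightly observe that the whole argument lives in a single degree $k$ and never touches $\diff$. Your mechanism is the same dimension-by-dimension recursion the paper itself employs in Proposition \ref{prop:extrec} for building extensions from pieces, so the proof is stylistically consonant with the paper. The Zorn's lemma refinement is careful but dispensable for the finite complexes arising in finite element practice; it does make the statement valid for infinite cellular complexes, and your justification of the chain condition is right, since each compatibility constraint involves only two cells, which lie in a common member of the chain.
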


In particular if $A$ admits extensions, then, when $T'$ is a subcell of $T$, the restriction $A^\bs(T) \to A^\bs(T')$ is onto. However this is in general a strictly weaker condition than the extension property. To see this, consider for instance the finite element spaces $A^0(T)$ consisting of $\poly^1$ functions on a quadrilateral $S$, on its edges $E$ and on its vertices $V$. Then the restriction from $A^0(S)$ to each edge $A^0(E)$ is onto, as are the other restrictions from faces to subfaces, but the restriction from $A^0(S)$ to $A^0(\partial S)$ is not onto, since the latter has dimension 4 but the former had dimension only 3.  

\begin{definition}
We say that $A^\bs$ is exact on $T$ when the following sequences are exact:
\begin{equation}\label{eq:exactd}
\xymatrix{
0 \ar[r]&  \bbR \ar[r]^\const & A^0(T) \ar[r]^\diff & A^1(T) \ar[r]^\diff & \ldots 
}
\end{equation}
We say that $A^\bs$ is locally exact on $\calT$ when $A^\bs$ is exact on each $T \in \calT$.
\end{definition}

\begin{definition}
We say that $A$ is \emph{compatible} when it admits extensions and is locally exact.
\end{definition}

\paragraph{de Rham type theorems.}

The following theorem extends Proposition 5.16 in \cite{ChrMunOwr11}:
\begin{theorem}
Suppose that the element system $A$ is compatible. Then the evaluation maps $\eval: A^\bs(\calT) \to \calC^\bs(\calT)$ induces isomorphisms on cohomology groups.
\end{theorem}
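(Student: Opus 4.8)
The plan is to prove the statement by strong induction on the number of cells of a cellular subcomplex $\calU \subseteq \calT$, establishing that $\eval : A^\bs(\calU) \to \calC^\bs(\calU)$ is a quasi-isomorphism and then taking $\calU = \calT$. The whole argument rests on a \emph{local lemma}: for every cell $T$ the subcomplex $A^\bs_0(T) \subseteq A^\bs(T)$ (which is indeed $\diff$-stable, since $\diff$ commutes with restriction) is exact except in degree $\dim T$, where $\eval_T$ realizes an isomorphism $H^{\dim T}(A^\bs_0(T)) \cong \bbR$. Granting this, the inductive step becomes a five-lemma argument on long exact sequences.

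First I would record two short exact sequences of cochain complexes whose surjectivity comes precisely from the extension property. Removing a maximal cell $T$ from $\calU$ leaves a cellular subcomplex $\calU' = \calU \setminus \{T\}$ with $\partial T \subseteq \calU'$; the restriction $A^\bs(\calU) \to A^\bs(\calU')$ is onto (extend the boundary data across $T$) with kernel exactly $A^\bs_0(T)$ (those families vanishing off $T$ and on $\partial T$), giving
\[
0 \to A^\bs_0(T) \to A^\bs(\calU) \to A^\bs(\calU') \to 0.
\]
The analogous sequence for cellular cochains has kernel $\calC^\bs_0(T)$, the line $\bbR$ placed in degree $\dim T$, and $\eval$ is a morphism between the two sequences because it is a cochain map (the Stokes identity (\ref{eq:Stokes})) commuting with restriction. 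A second, purely local, short exact sequence $0 \to A^\bs_0(T) \to A^\bs(T) \to A^\bs(\partial T) \to 0$ (again onto by the extension property) is what drives the local lemma.

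The heart of the matter is the local lemma, which I would prove by induction on $d = \dim T$ interlaced with the main induction, since $\partial T$ has strictly fewer cells than $\calU$. The long exact sequence of the local short exact sequence reads
\[
\cdots \to H^{k-1}(A^\bs(\partial T)) \to H^k(A^\bs_0(T)) \to H^k(A^\bs(T)) \to H^k(A^\bs(\partial T)) \to \cdots
\]
Local exactness gives $H^k(A^\bs(T)) = \bbR$ for $k=0$ and $0$ otherwise, while the inductive hypothesis (the main theorem applied to the smaller complex $\partial T$) identifies $H^k(A^\bs(\partial T))$ with the cellular cohomology of the topological sphere $\partial T$: namely $\bbR$ at $k=0$ and $k=d-1$ for $d\geq 2$ (and $\bbR^2$ at $k=0$ when $d=1$), vanishing otherwise. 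Feeding these in, and using that restriction carries constants to constants so that $H^0(A^\bs(T)) \to H^0(A^\bs(\partial T))$ is injective, one reads off that $H^k(A^\bs_0(T))$ vanishes except at $k=d$, where the connecting homomorphism supplies an isomorphism $H^{d-1}(A^\bs(\partial T)) \cong H^d(A^\bs_0(T))$. Naturality of this long exact sequence under $\eval$, together with the inductive isomorphism on $\partial T$, then shows $\eval$ sends $H^d(A^\bs_0(T))$ isomorphically onto $H^d(\calC^\bs_0(T)) = \bbR$; the base case $d=0$ is the normalization $\eval_T \const_T = \id_\bbR$ combined with local exactness. This cohomological bookkeeping on the sphere, and in particular tracking $\eval$ faithfully through the connecting maps, is the step I expect to be most delicate.

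With the local lemma in hand the inductive step for the main statement is routine. The morphism of the first pair of short exact sequences induces a commuting ladder of long exact sequences in cohomology, in which $\eval$ is an isomorphism on $H^\bs(A^\bs_0(T)) \to H^\bs(\calC^\bs_0(T))$ (local lemma) and on $H^\bs(A^\bs(\calU')) \to H^\bs(\calC^\bs(\calU'))$ (the induction hypothesis, $\calU'$ having one fewer cell). The five lemma then forces $\eval$ to be an isomorphism on $H^\bs(A^\bs(\calU)) \to H^\bs(\calC^\bs(\calU))$, and $\calU = \calT$ finishes the proof. The only inputs are compatibility — the extension property for the surjectivity in both short exact sequences, and local exactness for the cohomology of $A^\bs(T)$ — together with the fact that $\eval$ is a cochain morphism.
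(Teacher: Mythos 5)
Your proposal is correct and takes essentially the same route as the paper: the paper's one-line proof defers to Proposition 5.16 of \cite{ChrMunOwr11}, whose argument is exactly your strong induction on the number of cells using the short exact sequence $0 \to A^\bs_0(T) \to A^\bs(\calU) \to A^\bs(\calU') \to 0$ and the five lemma, while your interlaced local lemma on $A^\bs_0(T)$ is the same mechanism the paper itself spells out in proving Theorem \ref{theo:altcomp} via the diagram (\ref{eq:shortexact}). The only point worth flagging is your explicit appeal to the normalization $\eval_V \const_V = \id_\bbR$ at vertices in the base case, which the paper's abstract axioms leave implicit (it holds in all the intended realizations, where $\eval$ is genuine evaluation/integration).
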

\begin{proof}
The proof of Proposition 5.16 in \cite{ChrMunOwr11} works verbatim.
\end{proof}  

We also have the following extension of Proposition 5.17 in \cite{ChrMunOwr11}:
\begin{theorem} \label{theo:altcomp} Suppose that $A$ has extensions. Then $A$ is compatible if and only if the following two conditions hold:
\begin{itemize}
\item For each $T\in \calT$ the (''inclusion of constants'') map $\const: \bbR \to A^0(T)$ is injective.
\item For each $T \in \calT$ the sequence $A^\bs_0(T)$ has nontrivial cohomology only at index $k = \dim T$, and there the induced map:
\begin{equation}
\eval: \rmH^k A^\bs_0(T) \to \bbR,
\end{equation}
is an isomorphism (it is well defined by (\ref{eq:Stokes})).
\end{itemize}

\end{theorem}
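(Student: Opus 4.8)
The plan is to exploit the standing assumption that $A$ admits extensions. Since compatibility is by definition the conjunction of extensions and local exactness, and extensions are given, it suffices to prove that local exactness of $A^\bs(T)$ on every $T \in \calT$ is equivalent to the two stated conditions. I would argue cell by cell, by induction on $d = \dim T$. The central tool is the short exact sequence of complexes
\begin{equation}
0 \to A^\bs_0(T) \to A^\bs(T) \xrightarrow{\rest} A^\bs(\partial T) \to 0,
\end{equation}
which is exact precisely because $A^\bs_0(T)$ is the kernel of $\rest$ by definition and because $\rest$ is surjective onto $A^\bs(\partial T)$ by the extension property. Passing to the associated long exact sequence in cohomology lets me trade information about $\rmH^\bs A^\bs(T)$ (local exactness) against information about $\rmH^\bs A^\bs_0(T)$ (condition 2), with $\rmH^\bs A^\bs(\partial T)$ playing the role of a known quantity.

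The induction hypothesis supplies that known quantity. On the boundary subcomplex $\partial T$, built from cells of dimension $< d$, local exactness holds by induction; combined with the extension property inherited by subcomplexes (via the Proposition characterizing extensions), $A$ is compatible on $\partial T$. The earlier de Rham-type theorem then gives that $\eval: A^\bs(\partial T) \to \calC^\bs(\partial T)$ is a cohomology isomorphism. Since $\partial T$ is a triangulated $(d-1)$-sphere, its cellular cohomology is $\bbR$ in degrees $0$ and $d-1$, with the usual degenerate readings when $d \le 1$, so $\rmH^\bs A^\bs(\partial T)$ is completely pinned down.

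To identify the connecting homomorphisms, and in particular to see that the isomorphism $\rmH^d A^\bs_0(T) \cong \bbR$ of condition 2 is the one induced by $\eval$, I would run the long exact sequence of $A^\bs$ alongside that of the cellular complex, using that $\eval$ is a cochain morphism by the Stokes identity (\ref{eq:Stokes}). Concretely, $\eval$ maps the displayed short exact sequence into the relative cellular short exact sequence $0 \to \calC^\bs_0(T) \to \calC^\bs(T) \to \calC^\bs(\partial T) \to 0$, where $\calC^\bs_0(T) \cong \bbR$ is concentrated in degree $d$ (the single cell $T$ itself). Naturality of the connecting map then forces its compatibility with $\eval$ on the nose. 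Granting this, the diagram chase is routine in both directions: assuming $A^\bs(T)$ exact, the long exact sequence collapses to give $\rmH^k A^\bs_0(T) = 0$ for $k \ne d$ and $\rmH^d A^\bs_0(T) \cong \bbR$ via the connecting map, which naturality identifies with $\eval$; conversely, feeding condition 2 and the sphere cohomology of $A^\bs(\partial T)$ into the sequence forces $\rmH^k A^\bs(T) = 0$ for $k \ge 1$ and $\dim \rmH^0 A^\bs(T) = 1$, whereupon condition 1 (injectivity of $\const$, whose $1$-dimensional image must then fill $\ker \diff^0$) upgrades this to exactness at $\bbR$ and at $A^0(T)$. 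The base case $d = 0$ records only the normalization $\eval_T \const_T = \id_\bbR$ at a point.

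I expect the main obstacle to be the bookkeeping around the connecting homomorphism together with the low-dimensional degenerate cases ($d = 0, 1$, where $\partial T$ is empty or consists of two points and the sphere-cohomology statement must be read with care). The structural heart, namely comparing the two long exact sequences through the cochain morphism $\eval$, is exactly the mechanism behind Proposition 5.17 of \cite{ChrMunOwr11}, which this theorem extends; once the short exact sequence and the inductive de Rham input are in place, the argument should proceed essentially as there.
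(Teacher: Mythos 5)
Your proposal is correct and follows essentially the same route as the paper: induction on $\dim T$, the short exact sequence $0 \to A^\bs_0(T) \to A^\bs(T) \to A^\bs(\partial T) \to 0$ (exact by the extension property), comparison with the relative cellular sequence through the cochain morphism $\eval$, the de Rham theorem applied on $\partial T$ via the induction hypothesis, and a five-lemma/diagram chase in both directions. The paper phrases the identification of $\rmH^k A^\bs_0(T)$ with $\rmH^k \calC^\bs_0(T)$ directly through the middle vertical map of the ladder rather than through naturality of the connecting homomorphism, but this is the same mechanism.
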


\begin{proof}

We suppose $m >0$ and that the equivalence has been proved for cellular complexes consisting of cells of dimension $n< m$.

Let $T\in \calT$ be a cell of dimension $m$. We suppose that $A$ is compatible on the boundary of $T$. Since the boundary is $(m-1)$ dimensional we may apply the above de Rham theorem there.

We write the following diagram:
\begin{equation}\label{eq:shortexact}
\xymatrix{
0 \ar[r] & A^\bs_0(T) \ar[r] \ar[d]^\eval & A^\bs(T) \ar[r] \ar[d]^\eval & A^\bs(\partial T) \ar[r] \ar[d]^\eval  & 0 \\
0 \ar[r] & \calC^\bs_0(T) \ar[r] & \calC^\bs(T) \ar[r] & \calC^\bs(\partial T) \ar[r]  & 0 
}
\end{equation}

On the rows, the second map is inclusion and the third arrow restriction. Both rows are short exact sequences of complexes. The vertical map is the de Rham map. The diagram commutes.

We write the two long exact sequences corresponding to the two rows, and connect them by the map induced by the de Rham map.
\begin{equation}
\xymatrix{
\rmH^{k-1} A^\bs(T)  \ar[r] \ar[d]  & \rmH^{k-1} A^\bs(\partial T)  \ar[r] \ar[d] &\rmH^{k}  A^\bs_0(T) \ar[r] \ar[d] & \rmH^{k} A^\bs(T)  \ar[r] \ar[d]  & \rmH^{k} A^\bs(\partial T) \ar[d]\\
\rmH^{k-1} \calC^\bs(T)  \ar[r]  & \rmH^{k-1} \calC^\bs(\partial T)  \ar[r] &\rmH^{k}  \calC^\bs_0(T) \ar[r]  & \rmH^{k} \calC^\bs(T)  \ar[r]  & \rmH^{k} \calC^\bs(\partial T) 
}
\end{equation}

Suppose that (\ref{eq:exactd}) is exact. Then the first and fourth vertical maps are isomorphisms. By the induction hypothesis the second and fifth are isomorphisms. By the five lemma, the third one is an isomorphism. This can be stated as announced.

Suppose that the two stated conditions hold. One applies the five lemma to the long exact sequence, and obtains that $A^\bs(T)$ is exact, except at index $0$. The cohomology group of index $0$ is one dimensional, and must consist of the constants, by injectivity of their inclusion.
\end{proof}

\paragraph{Extensions, dimension counts and harmonic interpolation.}
The following proposition almost exactly reproduces Proposition in \cite{ChrRap16}.

\begin{proposition}\label{prop:extrec} Suppose that $A^k$ is an element system and that $T \in \calT$. Suppose that, for each cell $U \in \partial T$, each element $v$ of $A^k_0(U)$ can be extended to an element $u $ of $A^k(T)$ in such a way that, $\rest_{UT} u = v$ and for each cell $U' \in \partial T$ with the same dimension as $U$, but different from $U$, we have $\rest_{U'T} u  = 0$. Then $A^k$ admits extensions on $T$.
\end{proposition}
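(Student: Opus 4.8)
The goal is to show that the restriction $\rest^k : A^k(T) \to A^k(\partial T)$ is onto, i.e.\ that every single-valued family $(u_U)_{U \in \partial T}$, with $u_U \in A^k(U)$ and $\rest_{U''U} u_U = u_{U''}$ whenever $U'' \subcell U$, is the restriction of some $u \in A^k(T)$. The plan is to build the extension by induction on the dimension $d$ of the cells of $\partial T$, arranging at stage $d$ that the element produced matches the family on all cells of dimension $\le d$, and invoking the hypothesis one cell at a time after first reducing the data on each cell to lie in $A^k_0$.

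For the base case $d=0$ the cells are vertices; since the boundary of a vertex is empty we have $u_V \in A^k_0(V) = A^k(V)$, so the hypothesis furnishes $\tilde u_V \in A^k(T)$ with $\rest_{VT}\tilde u_V = u_V$ and $\rest_{V'T}\tilde u_V = 0$ for every other vertex $V'$; their sum matches the family on all vertices. For the inductive step, assume an element $s \in A^k(T)$ has already been produced with $\rest_{UT} s = u_U$ for all $U \in \partial T$ of dimension $< d$ (such $s$ exists by induction, applied to the restriction of the family to lower-dimensional cells, which is again single-valued). For each $d$-cell $U$ set $v_U := u_U - \rest_{UT}s \in A^k(U)$. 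Using the composition rule $\rest_{U''T} = \rest_{U''U}\rest_{UT}$ together with single-valuedness, one checks $\rest_{U''U} v_U = u_{U''} - \rest_{U''T} s = 0$ for every $U'' \in \partial U$, so $v_U \in A^k_0(U)$. The hypothesis then yields $\tilde v_U \in A^k(T)$ with $\rest_{UT}\tilde v_U = v_U$ and $\rest_{U'T}\tilde v_U = 0$ for all $d$-cells $U' \ne U$. I would then set $s' = s + \sum_{\dim U = d}\tilde v_U$ and claim it matches the family on all cells of dimension $\le d$.

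The main point — and the only delicate step — is to verify that adding $\sum_U \tilde v_U$ corrects the $d$-cells without spoiling the lower cells already handled by $s$. On a $d$-cell $U_0$ the cross terms $\rest_{U_0T}\tilde v_U$ for $U \ne U_0$ vanish by the hypothesis, leaving $\rest_{U_0T}s' = \rest_{U_0T}s + v_{U_0} = u_{U_0}$. For a cell $U_j$ of dimension $< d$ I must show each leakage term $\rest_{U_jT}\tilde v_U$ is zero. If $U_j \subcell U$ this follows from $v_U \in A^k_0(U)$ via $\rest_{U_jT}\tilde v_U = \rest_{U_jU}v_U = 0$. If $U_j \not\subcell U$, the key structural observation is that $U_j$ is a subcell of some $d$-cell $U' \ne U$ of $\partial T$ (for the simplicial cells used here, any proper face of dimension $< d$ that is not a face of $U$ can be enlarged to a $d$-dimensional proper face distinct from $U$); then $\rest_{U_jT}\tilde v_U = \rest_{U_jU'}\rest_{U'T}\tilde v_U = 0$ because $\rest_{U'T}\tilde v_U = 0$ by the hypothesis. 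Hence $\rest_{U_jT}s' = \rest_{U_jT}s = u_{U_j}$, which closes the induction; taking $d = \dim T - 1$ produces an element of $A^k(T)$ restricting to the prescribed family on all of $\partial T$. I expect this leakage cancellation to be the crux: it is precisely where the ``vanishing on equidimensional cells'' clause of the hypothesis is used, and it rests on the combinatorial fact that lower cells lying outside $U$ are captured by sibling $d$-cells on which the extension has been forced to vanish.
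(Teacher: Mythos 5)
Your proof is correct and follows essentially the same route as the paper's: an induction on the dimension of boundary cells, subtracting the partial extension to reduce the data on each $d$-cell to $A^k_0$, applying the hypothesis cell by cell, and summing the corrections. The only difference is bookkeeping — the paper tracks matching only at the current dimension and leaves the non-spoiling of lower cells implicit, whereas you verify the leakage terms explicitly via the sibling-cell (purity) argument, which is a legitimate filling-in of a step the paper glosses over.
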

\begin{proof}
In the situation described in the proposition we denote by $\ext_U v = u$ a chosen extension of $v$ (from $U$ to $T$).

Pick $v \in A^k(\partial T)$.  Define $u_{-1}= 0 \in A^k(T)$.

Pick $l \geq -1$ and suppose that we have a $u_{l} \in A^k(T)$ such that $v$  and $u_l$ have the same restrictions on all $l$-dimensional cells in $\partial T$. Put $w_l = v - \rest_{\partial T\, T} u_l \in A^k(\partial T)$. For each $(l+1)$-dimensional cell $U$ in $\partial T$, remark that $\rest_{U\partial T} w_l \in A^k_0(U)$, so we may extend it to the element $\ext_U \rest_{U\partial T} w_l \in A^k(T)$.
Then put:
\begin{equation}
u_{l+1} = u_l + \sum_{U \, : \, \dim U = l+1} \ext_U \rest_{U\partial T} w_l.
\end{equation}
Then $v$ and $u_{l+1}$ have the same restrictions on all $(l+1)$-dimensional cells in $\partial T$. 

We may repeat until $l+1 = \dim T$ and then $u_{l+1}$ is the required extension of $v$.
\end{proof}

\begin{proposition}\label{prop:extdim}
Let $A$ be a FES on a cellular complex $\calT$. Then: 
\begin{itemize}
\item We have:
\begin{equation}\label{eq:bounddim}
\dim A^k (\calT) \leq \sum_{ T\in \calT}\dim A^k_0(T).
\end{equation}
\item Equality holds in (\ref{eq:bounddim})  if and only if $A^k$ admits extensions on each $T \in \calT$.
\end{itemize}
\end{proposition}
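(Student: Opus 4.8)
The plan is to filter $\calT$ by its skeleta and analyze the restriction maps between consecutive skeleta, working at a fixed index $k$ throughout. For $j \geq 0$ let $\calT^{(j)}$ denote the subcomplex of cells of dimension at most $j$, with the convention $\calT^{(-1)} = \emptyset$ (so $A^k(\calT^{(-1)}) = 0$). The forgetful restriction of families
\[
\mu_j : A^k(\calT^{(j)}) \to A^k(\calT^{(j-1)})
\]
simply drops the components indexed by the $j$-dimensional cells. First I would compute its kernel: a compatible family on $\calT^{(j)}$ that vanishes on all cells of dimension $< j$ is determined by its values $u_T$ on the $j$-cells $T$, and the compatibility condition $\rest_{T'T} u_T = u_{T'} = 0$ for every strict subcell $T' \subcell T$ says exactly that $u_T \in A^k_0(T)$; conversely, since distinct $j$-cells are not subcells of one another, any such choice assembles into a valid family. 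This gives the left-exact sequence
\[
0 \to \bigoplus_{\dim T = j} A^k_0(T) \to A^k(\calT^{(j)}) \xrightarrow{\ \mu_j\ } A^k(\calT^{(j-1)}).
\]

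Next I would run the dimension count by induction over $j$. Rank--nullity applied to this sequence gives
\[
\dim A^k(\calT^{(j)}) \leq \sum_{\dim T = j} \dim A^k_0(T) + \dim A^k(\calT^{(j-1)}),
\]
with equality precisely when $\mu_j$ is onto. Starting from $A^k(\calT^{(-1)}) = 0$ and summing up to $j = \dim\calT$, where $\calT^{(\dim\calT)} = \calT$, yields the bound (\ref{eq:bounddim}); moreover equality holds in (\ref{eq:bounddim}) if and only if $\mu_j$ is surjective for every $j$.

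It then remains to identify surjectivity of all the $\mu_j$ with the extension property on every cell. For the easy direction, if $A^k$ admits extensions on each cell, then given a family on $\calT^{(j-1)}$ I would extend it over each $j$-cell $T$ separately by applying the surjection $\rest : A^k(T) \to A^k(\partial T)$ to its restriction to $\partial T$; these choices are mutually compatible because distinct $j$-cells meet only in lower-dimensional cells, whose values are already fixed, so $\mu_j$ is onto. For the converse I would induct on dimension: assuming extensions hold on all cells of dimension $< j$, the family-level restriction $A^k(\calT^{(j-1)}) \to A^k(\partial T)$ is surjective (a section on the subcomplex $\partial T$ extends to $\calT^{(j-1)}$ by filling in the remaining cells in order of increasing dimension, each step using extension on a cell of dimension $< j$, exactly the flabby-sheaf property); composing such an extension with the surjectivity of $\mu_j$ produces a preimage in $A^k(T)$ of any prescribed element of $A^k(\partial T)$, so $A^k$ admits extensions on every $j$-cell.

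I expect the main obstacle to be precisely this converse in the last step: surjectivity of $\mu_j$ only guarantees that families \emph{already defined on the whole sub-skeleton} $\calT^{(j-1)}$ extend, whereas the extension property on a single cell $T$ asks to extend an arbitrary element of $A^k(\partial T)$. Bridging this gap requires the inductive input that every section on $\partial T$ first extends to $\calT^{(j-1)}$, which is where the extension property on lower-dimensional cells, and hence the flabby-sheaf characterization established earlier, must be invoked.
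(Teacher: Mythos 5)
Your proof is correct and is, in substance, the argument the paper itself relies on: the paper's proof is only a citation to \cite{ChrRap16}, and the proof there proceeds by exactly your skeletal filtration --- identifying the kernel of $A^k(\calT^{(j)}) \to A^k(\calT^{(j-1)})$ with $\bigoplus_{\dim T = j} A^k_0(T)$, applying rank--nullity at each level, and converting surjectivity of each $\mu_j$ into the cell-by-cell extension property by filling in cells of increasing dimension (the same flabbiness mechanism used in Proposition \ref{prop:extrec}). You also correctly isolate and close the one delicate point, namely that in the converse direction a section on $\partial T$ must first be extended to the whole $(j-1)$-skeleton, via the extension property on lower-dimensional cells, before surjectivity of $\mu_j$ can be invoked.
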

\begin{proof}
The proof in \cite{ChrRap16} works verbatim.
\end{proof}

Suppose now that we are discretizing differential forms, say the sequence $\rmH^1_\rmd\difo^\bs(S)$ or, more precisely, $\rmC^0_\rmd\difo^k(S)$. For each cell $T$, equip each space of double traces of $\rmC^0_\rmd\difo^k(S)$, with a continuous scalar product $\langle \cdot | \cdot \rangle$, typically a variant of the $\rmL^2$ product on forms. For a given finite element system $A$ (equipped with double traces for the restrictions), define spaces $\calF^k(T)$ of degrees of freedom as follows. For  $k = \dim T$:
\begin{align} 
\calF^k(T) & = \{\langle \cdot |  v \rangle \ : \ v \in \ker \rmd | A^{k}_0(T)\} \oplus \{ \bbR \ts \int \cdot \}, \label{eq:harmdofk}
\end{align}
and for $k \neq \dim T$:
\begin{align}
\calF^k(T) & = \{\langle \cdot | v \rangle \ : \ v \in \ker \rmd | A^{k}_0(T)\} \oplus \{ \langle \rmd \cdot | v \rangle \ : \ v \in \rmd A^{k}_0(T)\}. \label{eq:harmdof}
\end{align} 
This is the natural generalization, to the adopted setting, of so-called projection based interpolation, as defined in \cite{DemBab03}\cite{DemBuf05}. We call these the \emph{harmonic degrees of freedom}.  For compatible finite element systems these degrees of freedom are unisolvent and yield a commuting interpolator $\rmC^0_\rmd\difo^\bs(S) \to A^\bs(\calT)$, which we call the \emph{harmonic interpolator}. This topic is detailed in \S 2.4 of \cite{ChrRap16}, see in  particular Proposition 2.8 of that paper.

\paragraph{Minimality.}

Consider a FES $A$ on a cellular complex $\calT$ where the topdimensional cells are domains in a fixed vectorspace $\bbV$ of dimension $n$. Suppose furthermore that each certex lies in an $n$-dimensional cell. Suppose that, when $T$ is a top-dimensional cell, $A^k(T)$ is a space of $k$-forms, containing the constant ones. If $A$ is compatible then in particular for each vertex $V \in \calT$, the restriction from $A^k(T)$ to $A^k(V)$ is onto. Depending on the nature of restriction we deduce:
\begin{itemize}
\item If restriction is the pullback, then $A^k(V) = 0$, except for $k= 0$, in which case it has dimension $1$.
\item If restriction is the trace, then $A^k(V) = \alter^k(\bbV)$.
\item If restriction is the double trace then $A^k(V) = \alter^k(\bbV) \oplus \alter^{k+1}(\bbV)$.
\end{itemize}
Moreover, if $A$ is compatible, then we must have, for any $k$-dimensional cell, $\dim A^k_0(T) \geq 1$, by Theorem \ref{theo:altcomp}.

These considerations provide a lower bound on $\dim A^k(\calT)$ in view of Proposition \ref{prop:extdim}. We will see examples where this lower bound is attained. These examples are then minimal FES.

This paper defines four minimal FES: two in 2D and two in 3D. In each dimension we distinguish between continuous and discontinuous divergence.

The topic of minimal FES is studied in more detail in \cite{ChrGil16}, in the case where the restriction is the pullback. General recipies for constructing small compatible FES within a big compatible FES are provided.

\section{High order composite elements in 2D\label{sec:twodhigh}}

\paragraph{Definition of finite element spaces.}

On a triangle $T$, we define a complex of regularity $(2, 1+, 1)$ depending on a parameter $p \geq 3$. The choice $p=3$ was described previously, in Proposition \ref{prop:lowone}, except for the characterization of spaces attached to faces.

We define the following spaces:
\begin{itemize}
\item $A^0(T) = \rmC^1\poly^p \difo^0(\calR)$.\\
It consists of the functions which are $\calR$-piecewise in $\poly^p$, and which are of class $\rmC^1(T)$. 

\item $A^1(T) = \rmC^0_\rmd\poly^{p-1}\difo^1(\calR)$.\\
It consists of  the 1-forms which are $\calR$-piecewise in $\poly^{p-1}$, and which are of class $\rmC^0(T)$ with exterior derivative in $\rmC^0(T)$. 

\item $A^2(T)= \rmC^0\poly^{p-2}\difo^2(\calR)$.\\
It consists of the 2-forms which are $\calR$-piecewise in $\poly^{p-2}$, and  which are of class $\rmC^0(T)$. 
\end{itemize}

We analyse this complex as follows. First we notice:
\begin{proposition}
The following sequence is exact:
\begin{equation}\label{eq:exct}
\xymatrix{
0 \ar[r] &\bbR \ar[r] &  A^0(T) \ar[r] & A^1(T) \ar[r] & A^2(T) \ar[r] & 0,
}
\end{equation}
The dimensions are:
\begin{align}
\dim A^0(T) & = (3/2) p (p-1) + 3,\\
\dim A^1(T) & = 3 p (p-2),\\
\dim A^2(T) & =(3/2)(p-1)(p-2) - 2.
\end{align}

\end{proposition}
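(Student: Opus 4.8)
The plan is to describe all three spaces by the single formula $A^k(T) = \rmC^0_\rmd\poly^{p-k}\difo^k(\calR)$, after reconciling the regularity labels used in their definitions. Indeed, an $\calR$-piecewise $\poly^p$ function whose gradient is continuous is precisely a $\rmC^1(T)$ function, so $A^0 = \rmC^0_\rmd\poly^p\difo^0(\calR)$; and every $2$-form on the two-dimensional $T$ is closed, so $\rmC^0\poly^{p-2}\difo^2(\calR) = \rmC^0_\rmd\poly^{p-2}\difo^2(\calR)$. With this normalization the sequence is exactly the instance $\calS = \calR$, $V^k = \rmC^0_\rmd\poly^{p-k}\difo^k(\calR)$, of the construction discussed in the Examples paragraph of \S\ref{sec:poinc}.

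For exactness I would use the Poincar\'e operator $\poincare_W$ based at the inpoint $W$ of $\calR$. The structural fact to establish is that $\poincare_W$ maps $A^k(T)$ into $A^{k-1}(T)$: it raises the piecewise polynomial degree by exactly one, from $p-k$ to $p-(k-1)$, and, because $W$ is the cone vertex of $\calR$, it preserves both the piecewise structure and the continuity across interfaces. Granting this, identities (\ref{eq:poincare1}) and (\ref{eq:poincare2}) give $\id = \poincare_W\rmd + \rmd\poincare_W$ on forms of positive degree and $u - u(W) = \poincare_W\rmd u$ on functions, from which exactness follows index by index: for $\omega \in A^2$ one has $\rmd\omega = 0$ (top form), hence $\omega = \rmd\poincare_W\omega$ with $\poincare_W\omega\in A^1$, giving surjectivity of $\rmd\colon A^1\to A^2$; a closed $\alpha\in A^1$ equals $\rmd\poincare_W\alpha$ with $\poincare_W\alpha\in A^0$, giving exactness at $A^1$; and a closed $u\in A^0$ is constant by (\ref{eq:poincare2}), so $\ker\rmd|_{A^0} = \bbR$ and $\const$ is injective. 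This is precisely the conclusion of the Proposition on $\poincare$-stable subcomplexes in \S\ref{sec:poinc}, specialized to $W^k = V^k$.

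For the dimensions I would compute two of the three directly and read off the third from the exact sequence, which forces $\dim A^1 = \dim A^0 + \dim A^2 - 1$. The dimension of $A^0$ is that of the $\rmC^1$ (generalized Clough-Tocher) macro-element of degree $p$ on $\calR$, obtained by counting the $\rmC^1$ constraints across the three interior edges, or cited from the Clough-Tocher literature; it should reduce to $12$ at $p=3$, in agreement with Proposition \ref{prop:lowone}. The dimension of $A^2$ is that of continuous $\calR$-piecewise $\poly^{p-2}$ functions (the coefficients of the $2$-forms), given by the standard Lagrange count $V + E(d-1) + F\binom{d-1}{2}$ with $(V,E,F) = (4,6,3)$ the vertex, edge and face numbers of $\calR$ and $d = p-2$; it should reduce to the value in Proposition \ref{prop:lowone} at $p=3$. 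Substituting both into the Euler relation yields $\dim A^1$; it is prudent to cross-check the resulting three formulas against both the alternating-sum identity and the $p=3$ case before committing to the closed forms.

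The main obstacle is the Poincar\'e-operator stability on these composite spaces: one must verify that $\poincare_W$ really does preserve the $\calR$-piecewise-polynomial nature together with the continuity of the form and of its differential across the interior edges emanating from $W$ and across the outer edges. This is the only place where the choice of $W$ as the cone vertex of $\calR$ is essential, since it is what makes radial integration compatible with the refinement; once it is secured, both the exactness and the dimension bookkeeping are essentially formal. A secondary, purely computational difficulty is the $\rmC^1$ macro-element count for $A^0$, where the supersmoothness forced at the inpoint $W$ must be handled with care.
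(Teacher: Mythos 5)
Your proposal follows essentially the same route as the paper's own proof: exactness via the Poincar\'e operator $\poincare_W$ based at the inpoint (whose stability on the composite spaces the paper likewise asserts rather than details), $\dim A^0(T)$ cited from the Clough--Tocher literature \cite{DouDup79}, $\dim A^2(T)$ by the Lagrange count $4 + 6(d-1) + 3\binom{d-1}{2}$ with $d = p-2$, and $\dim A^1(T)$ read off from the Euler relation of the exact sequence. Your suggested cross-check against the alternating-sum identity and the case $p=3$ is well taken: carried out, it reveals that the closed forms printed in the statement are misprinted, since the computation yields $\dim A^2(T) = \tfrac{3}{2}(p-1)(p-2)+1$ and hence $\dim A^1(T) = 3(p-1)^2+3$, which match the dimensions $4$ and $15$ of Proposition \ref{prop:lowone} at $p=3$, whereas the stated formulas give $1$ and $9$ and are not even mutually consistent with exactness.
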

\begin{proof}
\tpoint Exactness follows from an application of the Poincar\'e operator associated with the inpoint $W$.

\tpoint For $A^0(T)$, the dimension is given in \cite{DouDup79}.

\tpoint For $A^2(T)$ the space consists of continuous piecewise $\poly^r$ functions, on a mesh with 4 vertices, 6 edges and 3 triangles, so with $r = p-2$. Adding the dimensions of the bubblespaces we get:
\begin{equation}
\dim A^2(T) = 4 + 6 (r-1) + 3 (\frac{(r-1)(r-2)}{2}) = \frac{3}{2}r(r+1) +1 .
\end{equation}

\tpoint The dimension of $A^1(T)$ can then be deduced from the exactness of (\ref{eq:exct}):
\begin{equation}
\dim A^1(T) = -1 + \dim A^0(T) + A^2(T).
\end{equation}
This completes the proof.
\end{proof}

\begin{remark}
The dimensions are those one obtains by the perhaps na\"ive approach of counting constraints on piecewise-polynomial differential forms. 

For instance, for $A^0(T)$ one starts with the space of $\calR$-piecewise polynomials of degree $p$. It has dimension $(3/2)(p+2)(p+1)$. To be $\rmC^1$ at $W$ one imposes two equalities of first order jets, which amounts to 6 conditions. Then, on the edges joining $W$ to the vertices, one expresses continuity, knowing we already have continuity at $W$ as well as continuity of the directional derivative at $W$ along the edge. This gives  $3(p-1)$ conditions. Finally one expresses continuity of a transverse derivative on the interior edges, knowing that we already have continuity of it at $W$. This also gives $3(p-1)$ conditions. 

This gives a lower bound on the dimension, since we are not certain at this point that the imposed conditions are linearly independent.
\end{remark}

Having examined the spaces $A^k(T)$, we now look at what happens on the faces of $T$:

\case{Vertices.} We define, at a vertex $V$:
\begin{itemize}
\item $A^0(V) = \bbR \oplus \alter^1(\bbV)$ interpreted as a value and a value of the differential. Its dimension is 3.
\item $A^1(V) = \alter^1(\bbV) \oplus \alter^2(\bbV)$ interpreted as a value and a value of its exterior derivative. Its dimension is 3.
\item $A^2(V) = \alter^2(\bbV)$. Its dimension is 1.
\end{itemize}
Notice, in view of Lemma \ref{lem:exind}, that we have a well defined complex:
\begin{equation}
\xymatrix{
0 \ar[r] &\bbR \ar[r] &  A^0(V) \ar[r] & A^1(V) \ar[r] & A^2(V) \ar[r] & 0,
}
\end{equation}
where the second arrow $v_0\mapsto (v_0, 0)$, the third is $(v_0,v_1) \mapsto (v_1, 0)$ and the fourth one is $(v_0, v_1)\mapsto v_1$. Remark that the complex is exact.

\case{Edges.} At an edge $E$ we define:
\begin{itemize}
\item  $A^0(E)$ is the subspace of $\poly^{p}(E) \oplus \poly^{p-1}(E) \otimes \alter^1(\bbV)$ consisting of admissible pairs $(v_0, v_1)$. Its dimension is $p+1 + p= 2p+1$. 
\item $A^1(E) = \poly^{p-1}(E)\otimes\alter^1(\bbV) \oplus \poly^{p-2}(E) \otimes\alter^2(\bbV)$. Its dimension is $2p +p-1 = 3p -1$.
\item $A^2(E) = \poly^{p-2}(E)\otimes\alter^2(\bbV)$. Its dimension is $p-1$.
\end{itemize}
Again, in view of Lemma \ref{lem:exind},  we notice that we have a well defined complex:
\begin{equation}
\xymatrix{
0 \ar[r] &\bbR \ar[r] &  A^0(E) \ar[r] & A^1(E) \ar[r] & A^2(E) \ar[r] & 0,
}
\end{equation}
and that it is exact.

We remark that $A$ defines a finite element system on $\subcells (T)$, with respect to restriction operators defined by taking double-traces. The crucial missing point is the extension property (flabbyness).

The following result is immediate.
\begin{proposition}
For any edge $E$ of $T$, $A$ admits extensions from $\partial E$ to $E$. Moreover:
\begin{align}
\dim A^0_0(E) & = 2p +1 - 2\cdot 3 = 2p -5,\\ 
\dim A^1_0(E) & = 3p -1 - 2\cdot 3 = 3p - 7,\\
\dim A^2_0(E) & = \phantom{1} p-1 - 2\cdot 1  = \phantom{1} p-3.
\end{align}
And there is nontrivial cohomology only at index $k=1$, where it has dimension $1$.
\end{proposition}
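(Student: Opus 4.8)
The plan is to establish the three assertions in order: the extension property first, deduce the dimension counts immediately from it, and then compute the cohomology via a long exact sequence.

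First I would verify that $A$ admits extensions from $\partial E$ to $E$, i.e.\ that $\rest \colon A^k(E) \to A^k(V_0) \oplus A^k(V_1)$ is onto for each $k$, where $V_0, V_1$ are the two endpoints of $E$. Since $V_0$ and $V_1$ share no subcell, $A^k(\partial E) = A^k(V_0) \oplus A^k(V_1)$ carries no interface constraint, so surjectivity onto this full direct sum is exactly what is needed. For $k = 2$ and $k = 1$ this reduces to Lagrange interpolation: an element of $A^2(E) = \poly^{p-2}(E) \otimes \alter^2(\bbV)$ is a single polynomial of degree $p-2 \geq 1$ to be prescribed at two points, and an element of $A^1(E)$ is a pair of polynomials of degrees $p-1$ and $p-2$ to be prescribed at the endpoints; both are solvable since $p \geq 3$. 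The delicate case is $k = 0$: an element of $A^0(E)$ is an admissible pair $(v_0, v_1)$ with $v_0 \in \poly^p(E)$ and $v_1 \in \poly^{p-1}(E) \otimes \alter^1(\bbV)$, and admissibility $\rmd \pull_E v_0 = \pull_E v_1$ says that the tangential component of $v_1$ equals the derivative $v_0'$ along $E$. Prescribing the vertex data $(v_0(V_i), v_1(V_i))$ thus forces $v_0'(V_i)$ to equal the tangential part of $v_1(V_i)$, so $v_0$ must be found by \emph{Hermite} interpolation (value and first derivative at both endpoints), which is possible precisely because $p \geq 3$; the tangential part of $v_1$ is then $v_0'$, matching the prescribed values automatically, while the normal part of $v_1$ is recovered by ordinary Lagrange interpolation in $\poly^{p-1}(E)$. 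This is the step I expect to be the main obstacle, as it is the one place where admissibility couples the two components and where the hypothesis $p \geq 3$ is genuinely used.

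Once extensions are established, $A^k_0(E)$ is the kernel of the surjection $\rest$, so
\[
\dim A^k_0(E) = \dim A^k(E) - \dim A^k(V_0) - \dim A^k(V_1) = \dim A^k(E) - 2\dim A^k(V).
\]
Substituting $\dim A^k(E) = 2p+1,\, 3p-1,\, p-1$ and $\dim A^k(V) = 3,\,3,\,1$ for $k = 0,1,2$ gives the claimed values $2p-5$, $3p-7$ and $p-3$.

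For the cohomology I would use the short exact sequence of complexes
\[
0 \to A^\bs_0(E) \to A^\bs(E) \to A^\bs(\partial E) \to 0,
\]
whose surjectivity on the right is exactly the extension property just proved. The complex $A^\bs(E)$ is exact (established earlier for edges), so its cohomology is $\bbR$ in degree $0$ and $0$ otherwise; likewise $A^\bs(\partial E) = A^\bs(V_0) \oplus A^\bs(V_1)$ has cohomology $\bbR^2$ in degree $0$ and $0$ otherwise. The associated long exact sequence therefore collapses to
\[
0 \to \rmH^0 A^\bs_0(E) \to \bbR \xrightarrow{r} \bbR^2 \to \rmH^1 A^\bs_0(E) \to 0, \qquad \rmH^2 A^\bs_0(E) = 0.
\]
The map $r$ is restriction of constants, which by the axiom $\rest\, \const_E = \const_V$ sends $c \mapsto (c,c)$; this diagonal map is injective with one-dimensional cokernel. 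Hence $\rmH^0 A^\bs_0(E) = 0$, $\rmH^1 A^\bs_0(E) \cong \bbR$, and $\rmH^2 A^\bs_0(E) = 0$, which is precisely the assertion that the only nontrivial cohomology sits at index $k = 1$ and is one-dimensional.
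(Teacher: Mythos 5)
Your proof is correct, and since the paper states this result without proof (it is introduced with ``The following result is immediate''), your write-up supplies exactly the intended details: the only genuinely delicate point is the $k=0$ extension, where admissibility couples the tangential component of $v_1$ to $v_0'$ and forces Hermite interpolation, which you correctly identify as the place $p \geq 3$ is used. Your cohomology computation via the long exact sequence of $0 \to A^\bs_0(E) \to A^\bs(E) \to A^\bs(\partial E) \to 0$ is the same mechanism the paper itself uses in the proof of Theorem \ref{theo:altcomp}, so the argument is fully consistent with the paper's framework.
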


\begin{theorem}
The Finite Element System $A$ admits extensions from $\partial T$ to $T$. Hence it is compatible.
\end{theorem}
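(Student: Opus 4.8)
The plan is to verify the one ingredient of compatibility that is still missing. Local exactness has already been checked on $T$ (exactness of (\ref{eq:exct})), on each edge and at each vertex, and extensions have been established at the vertices (trivially, the boundary being empty) and from $\partial E$ to $E$. Hence it suffices to show that the restriction $\rest^k : A^k(T) \to A^k(\partial T)$ is onto for $k = 0,1,2$; compatibility then follows from the definition and Theorem \ref{theo:altcomp}. I would first record that $A$, restricted to the one-dimensional complex $\partial T$, is already compatible, so the de Rham type theorem applies there: the evaluation $\eval : A^\bullet(\partial T) \to \calC^\bullet(\partial T)$ is an isomorphism on cohomology. As $\partial T$ is a topological circle, $\rmH^0 = \rmH^1 = \bbR$, and the class of a closed datum $v \in A^1(\partial T)$ vanishes precisely when its total circulation $\eval_{\partial T} v$ (the oriented sum over the edges) is zero.

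The two extreme degrees are handled directly. For $k=2$ the proxies are continuous piecewise $\poly^{p-2}$ scalar functions on $\calR$, and extending continuous piecewise-polynomial boundary data to a continuous piecewise polynomial on $\calR$ is the standard $\rmC^0$ Lagrange extension; hence $\rest^2$ is onto. For $k=0$ the space $A^0(T) = \rmC^1\poly^p\difo^0(\calR)$ is the (higher order) Clough--Tocher space, whose double-trace boundary data are exactly the compatible value-and-gradient data along $\partial T$; extendability is classical for $\rmC^1$ composite elements (\cite{Cia74}\cite{Per76}), and alternatively follows from Proposition \ref{prop:extrec} by constructing explicit edge and vertex bumps that vanish on the remaining same-dimensional cells. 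Thus $\rest^0$ is onto.

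The middle degree is the crux. Given $v \in A^1(\partial T)$, its exterior-derivative trace $\sfd v$ lies in $A^2(\partial T)$, so surjectivity of $\rest^2$ yields $w \in A^2(T)$ with $\rest^2 w = \sfd v$. Because $w$ is a top-degree form, $\rmd w = 0$, so by the homotopy identity (\ref{eq:poincare1}) one has $\rmd\poincare_W w = w$, and $\poincare_W$ maps $A^2(T)$ into $A^1(T)$ by the construction underlying the exactness of (\ref{eq:exct}). Setting $v^{(1)} = v - \rest^1\poincare_W w$ gives $\sfd v^{(1)} = \sfd v - \rest^2 w = 0$, a closed boundary datum. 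If $\eval_{\partial T} v^{(1)} \neq 0$ I first remove the circulation: choosing a continuous interior bubble $\omega \in A^2_0(T)$ with $\eval_T\omega \neq 0$ and, by exactness of $A^\bullet(T)$, a form $u_\star \in A^1(T)$ with $\rmd u_\star = \omega$, Stokes' formula (\ref{eq:Stokes}) gives $\eval_{\partial T}\rest^1 u_\star = \eval_T\omega \neq 0$ while $\rest^1 u_\star$ is again closed; subtracting a suitable multiple makes $v^{(1)}$ exact. A closed datum of zero circulation is $\sfd\phi$ for some $\phi \in A^0(\partial T)$; extending $\phi$ to $\Phi \in A^0(T)$ via the already-established surjectivity of $\rest^0$, the form $\rmd\Phi$ restricts to $\sfd\phi$. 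Re-assembling the three contributions produces $u \in A^1(T)$ with $\rest^1 u = v$, so $\rest^1$ is onto.

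The main obstacle is exactly this middle step, where the boundary circle carries $\rmH^1(\partial T) = \bbR$ while the contractible $T$ has $\rmH^1(A^\bullet(T)) = 0$: a closed boundary datum with nonzero circulation is never the restriction of a closed form on $T$, so the naive ''fill in a primitive'' strategy fails. The device that resolves it is the interior bubble $\omega$, whose $\rmd$-primitive is a genuinely non-closed $1$-form on $T$ with vanishing $2$-form trace but nonzero boundary circulation; this realizes the missing cohomology class by restriction of a non-closed form. Once $\rest^k$ is onto for $k=0,1,2$, $A$ admits extensions from $\partial T$ to $T$, and together with the already verified local exactness this yields compatibility.
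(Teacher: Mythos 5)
Your proposal is correct, but it takes a genuinely different route from the paper. The paper's proof is constructive and cell-by-cell: it invokes Proposition \ref{prop:extrec} and then builds, for each vertex and each edge, explicit extensions of elements of $A^k_0(U)$ vanishing on the other cells of the same dimension --- the vertex $1$-jet formulas involving $\lambda^2 v_0$ and $\lambda^2 w_1 \ctr X$, and the edge bumps manufactured from the cubic auxiliary fields $\Phi$ and $\Psi$ of Proposition \ref{prop:lowone} multiplied by polynomials $w(\lambda_1)$. You instead prove directly that $\rest^k : A^k(T) \to A^k(\partial T)$ is onto for $k=0,1,2$, which is all the definition of ``admits extensions'' requires (the localized extensions of Proposition \ref{prop:extrec} are a stronger statement than needed, though they also furnish basis functions attached to cells, a by-product your argument forgoes). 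Your middle-degree argument is the genuinely novel part and it is sound: peeling off the $\sfd$-trace via surjectivity at $k=2$ together with $\poincare_W$ (legitimate, since $\poincare_W$ maps $A^2(T)$ into $A^1(T)$ and $\rmd \poincare_W w = w$ by (\ref{eq:poincare1}) for the closed top-degree $w$); then, since compatibility of $A$ on the boundary circle is already established (edge extensions plus exactness of the vertex and edge complexes), applying the de Rham theorem to identify the obstruction for a closed datum with its circulation in $\rmH^1(\calC^{\bs}(\partial T)) \cong \bbR$; killing the circulation with $\poincare_W \omega$ for an interior bubble $\omega \in A^2_0(T)$ via (\ref{eq:Stokes}) (such $\omega$ exists, e.g.\ the hat function at the inpoint times the area form); and writing the exact remainder as $\sfd\phi$ with $\phi$ extended through $\rest^0$. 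This buys a conceptually transparent proof that entirely avoids the paper's $\Psi$-based case analysis for $k=1,2$ and makes the role of the boundary topology explicit. The one soft spot is the endpoint $k=0$: the citations \cite{Cia74}\cite{Per76} cover only the cubic Hsieh--Clough--Tocher element, whereas the theorem is asserted for all $p \geq 3$; your alternative (``explicit edge and vertex bumps via Proposition \ref{prop:extrec}'') is exactly the nontrivial construction the paper carries out (factoring $v_0 = w_0(\lambda_1)\lambda_0^2\lambda_1^2$ and treating $v_1 = w_1(\lambda_1)\lambda_0\lambda_1\,\rmd\lambda_2$ by means of $w_1(\lambda_1)\Phi$, plus the vertex $1$-jet extension), so to be self-contained your argument still needs to import that piece; with it supplied, your proof is complete.
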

\begin{proof}
We use Proposition \ref{prop:extrec}. What is required is to prove some extension properties from vertices to edges and triangles, and from edges to triangles. These required properties are proved in the two next paragraphs.
\end{proof}

We use the term \emph{jet} informally. An $r$-jet corresponds to a Taylor expansion of order $r$ in some vector bundle, which will here be a vector bundle of differential forms. However for the highest order partial derivatives, only a certain combination of them, corresponding to the exterior derivative, will be used. Moreover the jet exists even when a section it should be the expansion of, is not known a priori.

\paragraph{Extension of 1-jets from vertices.}

In this section we consider elements in dimension $2$ but our construction of extension from vertices is valid in any dimension. Let then $\bbV$ be a vector space of finite dimension and let $V$ be a point in $\bbV$.

We are interested in complexes at $V$ of the form:
\begin{equation}
A^k(V) = \alter^k(\bbV) \oplus \alter^{k+1}(\bbV).
\end{equation}

Suppose we are given $(v_0, v_1) \in \alter^k(\bbV) \oplus \alter^{k+1}(\bbV)$ at vertex $V$.  Suppose $T$ is a simplex, of arbitrary dimension, containing $V$. We want to find a $k$-form $u_0$ on $T$ whose double trace is $(v_0, v_1)$. In other words we want an admissible pair $(u_0, u_1)$ whose traces are $(v_0, v_1)$. 

Let $\lambda$ be the barycentric coordinate on $T$ with respect to vertex $V$, and let $X$ be the canonical vectorfield $X: x \mapsto x - V$. Notice that for any $w \in \alter^{k+1}(\bbV)$ considered as a constant $(k+1)$-form, we have $\rmd (w \ctr X) = (k+1) w$.

The admissible pair $(\lambda^2 v_0, 2\lambda \rmd \lambda \wedge v_0)$ on $T$ restricts to $(v_0, 2 \rmd \lambda \wedge v_0)$ at $V$. We therefore put $w_1 = v_1 -  2\rmd \lambda \wedge v_0$, and we want to find an extension of $(0, w_1)$. We notice that the following pair on $T$ is both admissible and restricts to $(0, w_1)$ at $V$:
\begin{equation}
(\frac{1}{k+1} \lambda^2 w_1 \ctr X, \frac{2}{k+1}\lambda \rmd \lambda \wedge (w_1 \ctr X) + \lambda^2 w_1)
\end{equation}

All in all, we extend the data at $V$ to $T$ by the formula: 
\begin{align}
(u_0, u_1)  = &(\lambda^2 v_0, 2\lambda \rmd \lambda \wedge v_0) +\\
&  (\frac{1}{k+1} \lambda^2 w_1 \ctr X, \frac{2}{k+1}\lambda \rmd \lambda \wedge (w_1 \ctr X) + \lambda^2 w_1). 
\end{align}
Notice that $u_0$ is a differential $k$-form of polynomial degree 3, that $u_1$ is a $(k+1)$-form of degree 2, that the pair $(u_0,u_1)$ is admissible, and that its restriction to the other vertices of $T$ is $0$, in the sense of double-traces. More stongly, the restriction to the face opposite to $V$ in $T$ is $0$. This construction can be used to obtain basisvectors attached to the vertices of the global spaces.

\begin{remark}
If $T$ is a triangle, Proposition \ref{prop:lowone} guarantees that we have extensions from the vertices of $T$ to $T$, as required in Proposition \ref{prop:extrec}, simply by matching degrees of freedom.
\end{remark}

\paragraph{Extension of polynomial 1-jets from edges to triangles.}
Now suppose $E$ is an edge of a triangle $T$, living in a vector space $\bbV$ of dimension $2$. We wish to extend data on $E$ to $T$, so as to be able to apply Proposition \ref{prop:extrec} .

Fix $p$ such that $ p \geq 3$. We consider the following spaces, for $0 \leq k \leq \dim \bbV$.
\begin{align}
A^k(E) = \{ & (v_0, v_1) \in \poly^{p-k}(E) \otimes \alter^k(\bbV) \oplus \poly^{p-k-1}(E) \otimes \alter^{k+1}(\bbV) \ : \ \nonumber \\
 & (v_0, v_1) \textrm{ is admissible} \}.
\end{align}
The admissibility condition is non-trivial only for $k=0$.

We label the vertices of $E$ with $0$ and $1$, and the third vertex of $T$ is labelled with $2$. The barycentric coordinates on $T$ are, accordingly, denoted $\lambda_0, \lambda_1, \lambda_2$.

We suppose we have chosen an inpoint $W$ on $T$, and we divide $T$ into three triangles by joining $W$ to the vertices of $T$. The simplicial complex so obtained is denoted $\calR$.

\begin{lemma}
There is a function $\Phi \in \rmC^1\poly^3(\calR)$ such that:
\begin{align}
\trace_{\partial T}(\Phi) &= 0,\\
\trace_{\partial T} (\rmd \Phi) & = \trace_{\partial T} (\lambda_0 \lambda_1 \rmd \lambda_2). 
\end{align}
\end{lemma}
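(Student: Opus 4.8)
The plan is to build $\Phi$ directly inside the Clough--Tocher space $\rmC^1\poly^3\difo^0(\calR)$ by prescribing its Hsieh--Clough--Tocher degrees of freedom, as listed in Proposition \ref{prop:lowone}, and then to read off the two trace conditions from the edgewise structure of those degrees of freedom. First I would record the target boundary data. Since at each vertex of $T$ at least one of $\lambda_0,\lambda_1$ vanishes, the product $\lambda_0\lambda_1$, and hence the one-form $\lambda_0\lambda_1\rmd\lambda_2$, vanishes at all three vertices. Moreover $\lambda_0\lambda_1\rmd\lambda_2$ vanishes identically on the edge $[V_0,V_2]$ (where $\lambda_1=0$) and on $[V_1,V_2]$ (where $\lambda_0=0$), while on $E=[V_0,V_1]$ one has $\lambda_2=0$, so its tangential pullback $\pull_E(\lambda_0\lambda_1\rmd\lambda_2)=(\lambda_0\lambda_1)\rmd(\pull_E\lambda_2)$ is zero and only the transverse part survives. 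This is exactly the statement that the pair $(0,\lambda_0\lambda_1\rmd\lambda_2)$ is admissible on $\partial T$.

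Next I would define $\Phi$ as the unique element of $\rmC^1\poly^3\difo^0(\calR)$ whose degrees of freedom are: all vertex values and all vertex values of the differential equal to zero; the edge degrees of freedom on $[V_0,V_2]$ and $[V_1,V_2]$ equal to zero; and the edge degree of freedom on $E$ set to $\mu_E(\lambda_0\lambda_1\rmd\lambda_2)$, i.e. to the transverse value $(\lambda_0\lambda_1\rmd\lambda_2)_{W_E}(\nu_E)$ of the target one-form at the interior point $W_E$. Unisolvence of these twelve degrees of freedom, established in Proposition \ref{prop:lowone}, guarantees that $\Phi$ exists and is uniquely determined.

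It then remains to verify the two trace identities, and here I would exploit the fact that on each closed edge of $T$ the relevant restrictions are genuine univariate polynomials, each edge lying in a single subtriangle of $\calR$. On any edge the trace of $\Phi$ is a cubic pinned down by its two endpoint values and two endpoint tangential derivatives, all of which vanish by the vertex data; hence $\trace_{\partial T}\Phi=0$, and in particular the tangential component of $\rmd\Phi$ vanishes on $\partial T$, matching the tangential part of $\lambda_0\lambda_1\rmd\lambda_2$ computed above. On each edge the transverse derivative of $\Phi$ is a quadratic whose endpoint values are the normal components of the vertex differentials, hence zero; such a quadratic vanishing at both endpoints is determined by its single value at $W_E$, i.e. by the edge degree of freedom. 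On $[V_0,V_2]$ and $[V_1,V_2]$ that value is zero, so the transverse derivative vanishes there, matching the target; on $E$ the quadratic $\lambda_0\lambda_1(\rmd\lambda_2)(\nu_E)$ and the transverse derivative of $\Phi$ are two quadratics vanishing at the endpoints of $E$ that agree at $W_E$ by our choice, so they coincide. Combining tangential and transverse components yields $\trace_{\partial T}\rmd\Phi=\trace_{\partial T}(\lambda_0\lambda_1\rmd\lambda_2)$.

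The conceptual heart of the argument, and the step I would flag, is that the three edge degrees of freedom of the Clough--Tocher element are \emph{independent}: a single global cubic bubble is a scalar multiple of $\lambda_0\lambda_1\lambda_2$ and necessarily carries a nonzero transverse derivative on all three edges at once, so it cannot realise a transverse jump on $E$ alone. It is precisely the composite (piecewise-cubic) nature of $\calR$ that lets us switch the transverse derivative on the two edges meeting $V_2$ back to zero while retaining it on $E$, which is what makes the extension possible. Equivalently, one may write $\Phi=\lambda_0\lambda_1\lambda_2-\chi$, where $\chi\in\rmC^1\poly^3\difo^0(\calR)$ is the composite correction cancelling the unwanted transverse data on $[V_0,V_2]$ and $[V_1,V_2]$.
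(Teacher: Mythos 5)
Your proposal is correct and takes essentially the same route as the paper, whose proof is the one-line remark that the lemma ``follows from Proposition \ref{prop:lowone} by matching degrees of freedom''. You have simply spelled out that matching explicitly: prescribing zero vertex data, zero transverse DoFs on the two edges meeting $V_2$, and $\mu_E(\lambda_0\lambda_1\rmd\lambda_2)$ on $E$, then verifying the traces edgewise via the cubic Hermite and edge-bubble quadratic arguments, which is exactly the verification the paper leaves implicit.
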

\begin{proof}
Follows from Proposition \ref{prop:lowone} by matching degrees of freedom.
\end{proof}

\begin{lemma}
There is a 1-form $\Psi \in \rmC^0_\rmd\poly^2\difo^1(\calR)$ such that:
\begin{align}
\trace_{\partial T}(\Psi) &= \trace_{\partial T} (\lambda_0\lambda_1 \rmd \lambda_1),\\
\trace_{\partial T} (\rmd \Psi) & =0 . 
\end{align}
\end{lemma}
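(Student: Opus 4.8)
The plan is to recognize that $\Psi$ is sought in the low-order space $\rmC^0_\rmd\poly^2\difo^1(\calR)$ that was already analysed in Proposition \ref{prop:lowone}, and to produce it by prescribing its degrees of freedom, exactly as in the preceding lemma for $\Phi$. The naive candidate $\lambda_0\lambda_1\rmd\lambda_1$ is itself an element of this space and trivially realizes the first identity, but it fails the second, since $\rmd(\lambda_0\lambda_1\rmd\lambda_1)=\lambda_1\,\rmd\lambda_0\wedge\rmd\lambda_1$ does not have vanishing trace on $\partial T$. So I would instead define $\Psi$ as the unique member of $\rmC^0_\rmd\poly^2\difo^1(\calR)$ whose \emph{$u$-type} degrees of freedom from Proposition \ref{prop:lowone} (the vertex values $u(V)\in\alter^1(\bbV)$ and the edge quantities $\mu_E(u)$ and $\int_E u$) agree with those of $\lambda_0\lambda_1\rmd\lambda_1$, while forcing its \emph{$\rmd$-type} degrees of freedom (the vertex values $\rmd u(V)\in\alter^2(\bbV)$) to be $0$. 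Unisolvence in Proposition \ref{prop:lowone} guarantees that such a $\Psi$ exists and is unique.

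The key structural point to establish is that the full double trace of a member of $\rmC^0_\rmd\poly^2\difo^1(\calR)$ on $\partial T$ is pinned down by these boundary degrees of freedom, and that its two components, the trace of $u$ and the trace of $\rmd u$, can be read off independently on each edge. Since no boundary edge of $T$ is subdivided by $\calR$, on such an edge $E$ the restriction $\Psi|_E$ is a single $\alter^1(\bbV)$-valued $\poly^2$ form and $(\rmd\Psi)|_E$ a single $\alter^2(\bbV)$-valued $\poly^1$ form; the admissibility relation on the one-dimensional $E$ is vacuous, as both $\rmd\pull_E\Psi$ and $\pull_E\rmd\Psi$ vanish for degree reasons, so the two traces are unconstrained relative to one another along $E$. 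Splitting $\Psi|_E$ into tangential and transverse scalar $\poly^2$ parts, one sees that the tangential part is fixed by the two endpoint values of $u$ together with $\int_E u$, the transverse part by the two endpoint values together with $\mu_E(u)$, and $(\rmd\Psi)|_E$ by the two endpoint values of $\rmd u$.

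Granting this, both identities follow by a check on each of the three edges. The form $\lambda_0\lambda_1$ vanishes at every vertex and on the two edges incident to the vertex labelled $2$, so the $u$-type data of $\lambda_0\lambda_1\rmd\lambda_1$ is supported on the edge $E$ carrying vertices $0$ and $1$ and vanishes at all vertices; matching it edge by edge yields $\trace_{\partial T}\Psi=\trace_{\partial T}(\lambda_0\lambda_1\rmd\lambda_1)$, since on each edge the two candidate $\poly^2$ forms agree at the endpoints and share the prescribed transverse value and tangential integral. For the second identity, having set every vertex value of $\rmd\Psi$ to zero, each $(\rmd\Psi)|_E$ is a $\poly^1$ form vanishing at both endpoints, hence identically zero, so $\trace_{\partial T}\rmd\Psi=0$. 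Uniqueness is immediate, as any two solutions differ by an element with vanishing double trace on $\partial T$, which is $0$ by unisolvence. The step I would be most careful about is precisely the independence just invoked: one must be sure that prescribing the $u$-trace to equal that of $\lambda_0\lambda_1\rmd\lambda_1$ while setting the $\rmd u$-trace to zero is consistent, even though $\rmd(\lambda_0\lambda_1\rmd\lambda_1)$ is nonzero at vertex $1$. This is exactly what the degree-of-freedom structure of Proposition \ref{prop:lowone} permits, the vertex datum $\rmd u(V)$ being an independent degree of freedom decoupled from the $u$-type data by the vacuous edge admissibility; in short, the lemma follows from Proposition \ref{prop:lowone} by matching degrees of freedom, with the $\rmd$-type ones chosen to vanish.
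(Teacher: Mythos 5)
Your proposal is correct and is precisely the argument the paper intends: its proof of this lemma is the single line ``Follows from Proposition \ref{prop:lowone} by matching degrees of freedom,'' and you have carried out exactly that matching (the $u$-type DoFs copied from $\lambda_0\lambda_1\,\rmd\lambda_1$, the vertex values of $\rmd u$ set to zero), with unisolvence supplying existence. Your supporting observations --- that boundary edges of $\calR$ are unrefined, that the edge and vertex admissibility relations are vacuous for $1$-forms so the $u$- and $\rmd u$-data decouple, and that a $\poly^1$ trace of $\rmd\Psi$ vanishing at both endpoints is identically zero --- are all accurate and fill in the details the paper leaves implicit.
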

\begin{proof}
Follows from Proposition \ref{prop:lowone} by matching degrees of freedom.
\end{proof}

Suppose we are given $(v_0,v_1) \in A^k_0(E)$ and that we wish to extend it to $T$. We may extend this data by $0$ to all of $\partial T$.

\case{Case $k=0$.} First we remark that $v_0$ is of the form:
\begin{equation}
v_0 = w_0(\lambda_1) \lambda_0^2 \lambda_1^2 ,
\end{equation}
where $w_0 \in \poly^{p-4}(E)$. In this form $v_0$ is trivially extendable to $T$, as a function $u_0 \in \poly^p(T)$. Substracting $(\trace_E u_0, \trace_E \rmd u_0)$ from $(v_0,v_1)$ leaves us with data where $v_0 = 0$. Assuming now that $v_0 = 0$, admissibility shows that $v_1$ is of the form:
\begin{equation}
v_1 = w_1(\lambda_1) \lambda_0 \lambda_1  \rmd \lambda_2,
\end{equation}
where $w_1 \in \poly^{p-3}(E)$. Then we extend $(0, v_1)$ to $T$ as the admissible pair:
\begin{align}
(u_0, u_1) & = (u_0, \rmd u_0),\\
& = ( w_1(\lambda_1) \Phi,  \dot w_1(\lambda_1) \rmd \lambda_1  \Phi + w_1(\lambda_1) \rmd \Phi).
\end{align}
In our setup it is only $u_0$ which is of interest on $T$, but we need the traces of both $u_0$ and $\rmd u_0$ on $\partial T$.

Notice also that the constructed extension satisfies $u_0 \in \rmC^1\poly^p\difo^0(\calR)$.

\case{Case $k=1$.} We remark that the data is of the form:
\begin{align}
v_0 &= w_0(\lambda_1) \lambda_0 \lambda_1 \rmd \lambda_1 + w_1(\lambda_1) \lambda_0 \lambda_1 \rmd \lambda_2,\\
v_1 &= w_2(\lambda_1) \lambda_0 \lambda_1 \rmd \lambda_1 \wedge \rmd \lambda_2.
\end{align}
With $w_0 \in \poly^{p-3}(E)$, $w_1 \in \poly^{p-3}(E)$ and $w_2 \in \poly^{p-4}(E)$. We essentially extend the three different components separately, but in a precise order.

First, let $\tilde w_2 \in \poly^{p-3}(E)$ denote an antiderivative of $w_2$. Put:
\begin{equation}
u_0 = \tilde w_2(\lambda_1) \rmd \Phi \in \rmC^0_\rmd \poly^{p-1} \difo^1(\calR).
\end{equation}
Then:
\begin{equation}
\rmd u_0 = w_2(\lambda_1) \rmd \lambda_1 \wedge \rmd \Phi,
\end{equation}
whose trace is $v_1$. This leaves us with the problem of extending data where $w_2 = 0$.

Second, define:
\begin{equation}
u_0 = w_1(\lambda_1) \rmd \Phi + \dot w_1(\lambda) \rmd \lambda_1 \Phi \in \rmC^0_\rmd \poly^{p-1} \difo^1(\calR).
\end{equation}
Then $\rmd u_0 = 0$, so in particular $\trace_{\partial T} u_0 = 0$. Moreover:
\begin{equation}
\trace_{\partial T} u_0 =  w_1(\lambda_1) \lambda_0 \lambda_1 \rmd \lambda_2.
\end{equation}
This leaves us with the problem of extending data where $w_2 = 0$ and $w_1 =0$.

Third, define:
\begin{equation}
u_0 = w_0(\lambda_1) \Psi  \in \rmC^0_\rmd \poly^{p-1} \difo^1(\calR).
\end{equation}
Then:
\begin{equation}
\trace_{\partial T} u_0 = w_0(\lambda_1) \lambda_0 \lambda_1 \rmd \lambda_1.
\end{equation}
and moreover:
\begin{align}
\trace_{\partial T}(\rmd u_0) & = \trace_{\partial T} (\dot w_0(\lambda_1) \rmd \lambda_1 \wedge \Psi) + \trace_{\partial T} (w_0(\lambda_1) \rmd \Psi),\\
& = 0.
\end{align}
This completes the extension procedure.

\case{Case $k=2$.} Then $v_1=0$ and $v_0$ is of the form:
\begin{equation}
v_0 = w_0(\lambda_1)\lambda_0\lambda_1 \rmd \lambda_0 \wedge \rmd \lambda_1,
\end{equation}
for some $w_0 \in \poly^{p-4}(E)$. We extend $v_0$ to $T$ as:
\begin{equation}
u_0 = w_0(\lambda_1) \rmd \lambda_0 \wedge \Psi \in \rmC^0 \poly^{p-2} \difo^2(\calR).
\end{equation}

\section{Tools for composite finite elements\label{sec:toolhigh}}

We develop some tools that will be used to define finite element sequences in dimension $n \geq 3$.

\paragraph{Various refinements of simplices.}
A simplex is a finite non-empty set. Its subsimplices are the non-empty subsets. The geometric realization of a simplex $T$ in a vector space containing the vertices, is its convex hull, denoted $|T|$. Geometric realizations are examples of cells. If $T$ is a simplex with vertices $V_0, \ldots , V_k$ we also write $T = [V_0, \ldots, V_k]$. 

If $T$ is a cell in a cellular complex $\calT$, we denote by $\subcells_\calT(T)$ the set of subcells of $T$ in $\calT$, which is also a cellular complex. We denote by $\subcells^k_\calT(T)$ the set of those subcells of $T$ which have dimension $k$. When no confusion is possible we omit the subscript $\calT$. In particular, if $T$ is a simplex the associated simplicial complex is denoted $\subcells(T)$.

For each simplex $T$ we choose an interior point $W_T$, called the inpoint of $T$.

\begin{definition} Given a simplex $T$ we denote by $\refi_m(T)$ the simplicial complex consisting of simplices of the form:
\begin{equation}
[W_{T_k}, W_{T_{k-1}}, \ldots, W_{T_0}, V_{0}, \ldots, V_l],
\end{equation}
such that:
\begin{itemize}
\item $T' = [V_0, \ldots, V_l]$ is a subsimplex of $T$ of dimension $l \leq m$,
\item $T_0 , \ldots , T_{k-1} , T_k$ are subsimplices of $T$ of dimension at least $m+1$,
\item The simplices are nested as follows, with strict inclusions: 
\begin{equation}
T' \subcellstrict T_0 \subcellstrict \ldots \subcellstrict T_{k-1} \subcellstrict T_k.
\end{equation}
\end{itemize}
We call $\refi_m(T)$ the  \emph{$m$-refinement} of $T$.
\end{definition}

In particular $\refi_0(T)$ is the barycentric refinement of $T$, at least when the inpoints are chosen to be the isobarycenters. We  see that $\refi_m(T)$ only uses inpoints of subsimplices of $T$ of dimension at least $m+1$ ; subsimplices of $T$ of dimension at most $m$ are not refined. Another way of saying this is that $\subcells (T)$ and $\refi_m(T)$ have the same $m$-skeleton (the $m$-skeleton of a cellular complex is the cellular complex consisting of those cells that have dimension at most $m$).  For $m \geq \dim T$ we have $\refi_m(T) = \subcells(T)$.

When choosing the inpoints, one is interested in satisfying special properties for adjacent simplices in some simplicial complex, as reviewed in \cite{LaiSch07}:
\begin{itemize}
\item In dimension 2, $\refi_1(T)$ is known as a Clough-Tocher split. One is also interested in splits where the inpoints of edges lie on the lines joining the inpoints of the adjacent triangles. Then $\refi_0(T)$ is known as a Powell-Sabin split. 

\item In dimension 3, one is interested in splits where the inpoints on faces lie on the lines joining the inpoints of the two adjacent tetrahedra. Then $\refi_1(T)$ is known as a Worsey-Farin split, after \cite{WorFar87}. If, in addition, the inpoint on edges lie on a plane cointaining all the inpoints of the adjacent tetrahedra (i.e. those containing the edge), then $\refi_0(T)$ is called a Worsey-Piper split, after \cite{WorPip88}.

\item Actually \cite{WorFar87} defines $\refi_{1}(T)$ in arbitrary dimension $n$ and refer to it as generalized Clough-Tocher split. On  $n$-dimensional simplices one chooses arbitrary inpoints. On $(n-1)$-dimensional simplices the inpoint is the intersection point with the line joining the inpoints of the two adjacent topdimensional simplices. 


\item Worsey-Piper splits may be difficult to construct. One example would be to choose, as inpoints, the circumcenters of all subsimplices. A sufficient condition for this choice to yield points in the interior of the simplices, is that simplices are strictly acute.  This is quite restrictive.

\item We also note that for $m = \dim T -1$, $\refi_m(T)$, which consists in adding the single inpoint $W_T$ to $T$ and cone it with the boundary simplices of $T$, is known as the Alfeld split of $T$, at least when $\dim T = 3$, see \cite{Alf84}.
\end{itemize}

The different types of refinements of a tetrahedron are illustrated in Figure \ref{fig:ref}. Not all subsimplices are represented, just those corresponding to one face of the tetrahedron.

We note the following:
\begin{lemma} We have:
\begin{itemize}
\item For any $m$, $\refi_m(T)$ is a refinement of $\refi_{m+1}(T)$.
\item If $U$ is a subsimplex of $T$ then:
\begin{equation}
\refi_m(U) = \{T' \in \refi_m(T) \ : \ |T'| \subseteq |U| \}.
\end{equation}
\end{itemize}
\end{lemma}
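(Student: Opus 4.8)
The lemma contains two assertions, which I would prove separately; the first is the substantial one, the second being essentially bookkeeping with barycentric coordinates. Let me dispose of the second first. If $\sigma = [W_{U_k}, \ldots, W_{U_0}, V_0, \ldots, V_l]$ lies in $\refi_m(U)$, then its defining chain $[V_0,\ldots,V_l] \subcellstrict U_0 \subcellstrict \ldots \subcellstrict U_k$ consists of subsimplices of $U$, hence of $T$, and it meets the same dimension constraints relative to $m$; thus $\sigma \in \refi_m(T)$. Moreover each $V_j$ lies in $|U|$ and each inpoint $W_{U_i}$ lies in $|U_i| \subseteq |U|$, so $|\sigma| \subseteq |U|$, giving the inclusion $\subseteq$. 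For the reverse inclusion, suppose $\sigma = [W_{T_k}, \ldots, W_{T_0}, V_0, \ldots, V_l] \in \refi_m(T)$ with $|\sigma| \subseteq |U|$. A vertex $V_j$ of $T$ lying in the face $|U|$ must be a vertex of $U$, so $[V_0,\ldots,V_l]$ is a subsimplex of $U$. Each $W_{T_i}$ lies in the relative interior of $|T_i|$, i.e. its barycentric coordinates on $T$ are supported exactly on the vertices of $T_i$; since $W_{T_i} \in |U|$ forces this support to lie among the vertices of $U$, we get $T_i \subcell U$. Hence all the data defining $\sigma$ are subsimplices of $U$, so $\sigma \in \refi_m(U)$.

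For the first assertion the plan is to exhibit $\refi_m(T)$ as the subdivision of $\refi_{m+1}(T)$ obtained by coning, on each $(m+1)$-dimensional subsimplex $U$ of $T$, the inpoint $W_U$, and to run a downward induction on $m$. I would first record the local picture. A typical simplex of $\refi_{m+1}(T)$ is $\rho = [W_{S_j}, \ldots, W_{S_0}, U_0, \ldots, U_q]$ with $U := [U_0,\ldots,U_q]$ of dimension $q \le m+1$, the $S_i$ of dimension $\ge m+2$, and $U \subcellstrict S_0 \subcellstrict \ldots \subcellstrict S_j$. If $q \le m$, then $\rho$ already satisfies the defining conditions of $\refi_m(T)$, since $\dim S_i \ge m+2$ is stronger than $\ge m+1$; so $\rho \in \refi_m(T)$. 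If $q = m+1$, then $|\rho|$ is the join of the face $|U|$ with the simplex on the vertices $W_{S_i}$, and $W_U$ lies in the relative interior of $|U|$; coning $W_U$ with the facets of $U$ subdivides $|U|$, and the induced join-subdivision of $|\rho|$ has as its top-dimensional pieces the simplices $[W_{S_j},\ldots,W_{S_0},W_U,U']$ with $U'$ a facet of $U$, all of which belong to $\refi_m(T)$.

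Next I would check the converse containment: every simplex $\tau = [W_{T_k},\ldots,W_{T_0},V_0,\ldots,V_l]$ of $\refi_m(T)$ sits inside a simplex of $\refi_{m+1}(T)$. As the $T_i$ form a strictly increasing chain of dimension $\ge m+1$, at most one of them, necessarily $T_0$, can have dimension exactly $m+1$. If none does, all $\dim T_i \ge m+2$ and $\tau$ is itself a simplex of $\refi_{m+1}(T)$. If $\dim T_0 = m+1$, then $W_{T_0}$ and the vertices $V_j$ all lie in $|T_0|$, whence $|\tau|$ is contained in the simplex of $\refi_{m+1}(T)$ spanned by $W_{T_1},\ldots,W_{T_k}$ and the vertices of $T_0$. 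Combined with the base case $\refi_m(T) = \subcells(T)$ for $m \ge \dim T$, a downward induction on $m$ shows that each $\refi_m(T)$ triangulates $|T|$; together with the containment just proved, this is exactly the statement that $\refi_m(T)$ refines $\refi_{m+1}(T)$.

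The delicate point is that the coning in the second paragraph is performed at $W_U$, which lies on the \emph{boundary} face $|U|$ of $\rho$ rather than in its relative interior, so this is not an ordinary stellar subdivision at an interior point. What makes it clean is the observation that $|\rho|$ is a join having $|U|$ as a factor, so subdividing $|U|$ induces a subdivision of $|\rho|$ cell by cell; and that within any single simplex of $\refi_{m+1}(T)$ at most one inpoint of an $(m+1)$-dimensional simplex can occur, namely $W_U$ when $q = m+1$. This localization is what lets the join-of-subdivisions formula apply independently on each cell, sidestepping any question about the order in which the various inpoints $W_U$ are coned.
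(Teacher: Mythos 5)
The paper states this lemma without proof, so there is nothing of record to compare against; judged on its own, your argument is correct and supplies the proof the authors omit. For the second bullet, the only nontrivial content is the reverse inclusion, and you identify exactly the right mechanism: since each inpoint $W_{T_i}$ lies in the relative interior of $|T_i|$, its barycentric support is the full vertex set of $T_i$, so $W_{T_i}\in|U|$ forces $T_i \subcell U$, and the dimension and nesting constraints are absolute, hence inherited. For the first bullet, your identification of $\refi_m(T)$ as the subdivision of $\refi_{m+1}(T)$ obtained by starring each $(m+1)$-dimensional subsimplex $U$ at its inpoint $W_U$ is the natural route, and the combination of your two containments (each cell of $\refi_{m+1}(T)$ is a union of cells of $\refi_m(T)$, via the case split $q\leq m$ versus $q=m+1$; each cell of $\refi_m(T)$ sits inside a cell of $\refi_{m+1}(T)$, via the observation that only $T_0$ can have dimension exactly $m+1$) with the downward induction from the base case $\refi_m(T)=\subcells(T)$ for $m\geq\dim T$ does establish that every $\refi_m(T)$ triangulates $|T|$ and refines $\refi_{m+1}(T)$. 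You also correctly isolate the one genuinely delicate point: $W_U$ lies on a boundary face of the cell $\rho$, so this is not a stellar subdivision at an interior point, but the join structure $|\rho|=|U|\ast[W_{S_0},\ldots,W_{S_j}]$ reduces it to subdividing the factor $|U|$, and your remark that among the faces of $\rho$ spanned by original vertices of $T$ only $U$ itself can have dimension $m+1$ is precisely what makes the cell-by-cell coning unambiguous.

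Two minor points you could tighten. First, the compatibility of the induced subdivisions on a face shared by two cells of $\refi_{m+1}(T)$ is asserted rather than checked; it follows in one line, since the induced subdivision of a face is the star at $W_U$ if the face contains $U$ and the identity otherwise, and both adjacent cells induce the same answer. Second, faces of your pieces with no original vertices, such as $[W_{S_j},\ldots,W_{S_0},W_U]$, are not literally of the form in the paper's definition of $\refi_m(T)$, which requires a nonempty base simplex $T'=[V_0,\ldots,V_l]$; one must read the definition as closed under passage to faces (or allow an empty base). This is a quibble with the paper's definition rather than a flaw in your argument, but your claim that the star-subdivision pieces together with all their faces lie in $\refi_m(T)$ tacitly uses it.
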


{
\begin{center}
\begin{figure}
\setlength{\unitlength}{1.2cm}
\begin{picture}(2,2)(-0.5,0)

\put(0,0){
\begin{picture}(2,2)

{\color{red}\polygon*(-0.1, 0.6)(0, 0.9)(0, 1.5)
\color{lightorange}\polygon*(-0.1, 0.6)(0.368, 0.748)(0, 0.9)
\color{yellow}\polygon*(0, 0.9)(0, 1.5)(0.368, 0.748)
}

\put(-0.5, 0.5){\circle*{0.1}}
\put(1, 0.5){\circle*{0.1}}
\put(0, 1.5){\circle*{0.1}}
\put(0, 0){\circle*{0.1}}
\put(0, 0){\line(-1, 1){0.5}}
\put(0, 0){\line(2, 1){1}}
\put(1, 0.5){\line(-1, 1){1}}
\put(-0.5, 0.5){\line(1, 2){0.5}}
\put(0, 0){\line(0, 1){1.5}}
\put(-0.1, 0.6){\circle*{0.05}}

\multiput(-0.5,0.5)(0.05,0){30}{\circle*{0.03}}

\put(0, 0){\line(1, 2){0.5}}
\put(0, 1.5){\line(1, -2){0.6}}
\put(1, 0.5){\line(-5, 2){1}}
\multiput(-0.1, 0.6)(0.05,0.015){10}{\circle*{0.02}}
\multiput(-0.1, 0.6)(0.05,-0.021){15}{\circle*{0.02}}
\multiput(-0.1, 0.6)(0.022,0.06){6}{\circle*{0.02}}
\multiput(-0.1, 0.6)(0.005, 0.05){18}{\circle*{0.02}}
\multiput(-0.1, 0.6)(0.008, -0.05){12}{\circle*{0.02}}
\multiput(-0.1, 0.6)(0.05, -0.004){20}{\circle*{0.02}}
\multiput(-0.1, 0.6)(0.048, 0.033){13}{\circle*{0.02}}
\end{picture}
}

\put(3,0){
\begin{picture}(2,2)


{
\color{yellow}\polygon*(0, 0)(0, 1.5)(0.368, 0.748)
\color{red}\polygon*(0, 0)(0, 1.5)(-0.1, 0.6)
}

\put(-0.5, 0.5){\circle*{0.1}}
\put(1, 0.5){\circle*{0.1}}
\put(0, 1.5){\circle*{0.1}}
\put(0, 0){\circle*{0.1}}
\put(0, 0){\line(-1, 1){0.5}}
\put(0, 0){\line(2, 1){1}}

\put(1, 0.5){\line(-1, 1){1}}
\put(-0.5, 0.5){\line(1, 2){0.5}}
\put(0, 0){\line(0, 1){1.5}}
\put(-0.1, 0.6){\circle*{0.05}}

\multiput(-0.5,0.5)(0.05,0){30}{\circle*{0.03}}

\put(0, 0){\line(1, 2){0.38}}
\put(0, 1.5){\line(1, -2){0.38}}
\put(1, 0.5){\line(-5, 2){0.62}}
\multiput(-0.1, 0.6)(0.05,0.015){10}{\circle*{0.02}}
\multiput(-0.1, 0.6)(0.005, 0.05){18}{\circle*{0.02}}
\multiput(-0.1, 0.6)(0.008, -0.05){12}{\circle*{0.02}}
\multiput(-0.1, 0.6)(0.05, -0.004){20}{\circle*{0.02}}
\end{picture}

}

\put(6,0){
\begin{picture}(2,2)

{
\color{yellow}\polygon*(0, 0)(0, 1.5)(1, 0.5)
\color{red}\polygon*(0, 0)(0, 1.5)(-0.1, 0.6)
}

\put(-0.5, 0.5){\circle*{0.1}}
\put(1, 0.5){\circle*{0.1}}
\put(0, 1.5){\circle*{0.1}}
\put(0, 0){\circle*{0.1}}
\put(0, 0){\line(-1, 1){0.5}}
\put(0, 0){\line(2, 1){1}}
\put(1, 0.5){\line(-1, 1){1}}
\put(-0.5, 0.5){\line(1, 2){0.5}}
\put(0, 0){\line(0, 1){1.5}}
\put(-0.1, 0.6){\circle*{0.05}}

\multiput(-0.5,0.5)(0.05,0){30}{\circle*{0.03}}

\multiput(-0.1, 0.6)(0.005, 0.05){18}{\circle*{0.02}}
\multiput(-0.1, 0.6)(0.008, -0.05){12}{\circle*{0.02}}
\multiput(-0.1, 0.6)(0.05, -0.004){20}{\circle*{0.02}}
\end{picture}

}
\put(9,0){
\begin{picture}(2,2)

{\color{yellow}\polygon*(0, 1.5)(1, 0.5)(0, 0)
\color{darkorange}\polygon*(-0.5, 0.5)(0, 1.5)(0, 0)
}

\put(-0.5, 0.5){\circle*{0.1}}
\put(1, 0.5){\circle*{0.1}}
\put(0, 1.5){\circle*{0.1}}
\put(0, 0){\circle*{0.1}}
\put(0, 0){\line(-1, 1){0.5}}
\put(0, 0){\line(2, 1){1}}
\put(1, 0.5){\line(-1, 1){1}}
\put(-0.5, 0.5){\line(1, 2){0.5}}
\put(0, 0){\line(0, 1){1.5}}

\multiput(-0.5,0.5)(0.05,0){30}{\circle*{0.03}}


\end{picture}

}

\end{picture}
\caption{\label{fig:ref}Refinements of a tetrahedron relative to one face:\newline
$\calR_0$ (Worsey-Piper), $\calR_1$ (Worsey-Farin), $\calR_2$ (Alfeld), $\calR_3$ (no split). }
\end{figure}
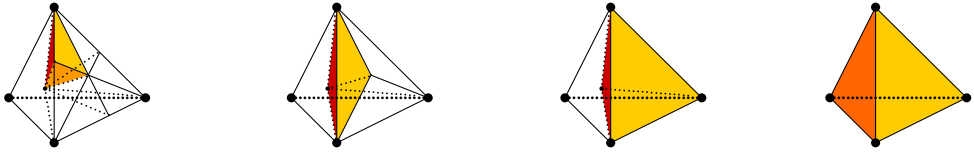
\end{center}
}

\paragraph{Alignments in meshes.}
Already on a triangular mesh in dimension $2$, continuity requirements involving derivatives, enforced on piecewise polynomials, may produce complicated spaces. The dimension will in general depend for instance on alignments of edges arriving at vertices. The following result pertains to one such situation.

Suppose $S$ is a two-dimensional vectorspace with a basis $(e_1, e_2)$. The basis vectors divide $S$ into four sectors, as follows. For the four possibilities of choices of signs $a,b \in\{+, -\}$ we consider the sectors: 
\begin{equation}
T_{ab} = \{ x_1 e_1 + x_2 e_2 \ : \ a x_1 \geq 0 \textrm{ and } b x_2 \geq 0 \}.
\end{equation}
We consider differential forms, which are piecewise polynomials with respect to this subdivision, with various continuity requirements across interfaces.

\begin{proposition}\label{prop:alired}
We have an exact sequence on $S$:
\begin{equation}
\xymatrix{
0 \ar[r] & \bbR \ar[r] & \rmC^1 \poly^2 \difo^0 \ar[r] & \rmC^0 \poly^1 \difo^1 \ar[r]  & \poly^0 \difo^2  \ar[r] & \bbR \ar[r] & 0
}
\end{equation}
where, more precisely:
\begin{itemize}
\item $\rmC^1 \poly^2 \difo^0$, the space of continuously differentiable piecewise polynomials of degree $2$, has dimension $8$. The arrow arriving from $\bbR$ is inclusion of constants. Any element $u$ will be uniquely determined by the values of the following data:
\begin{itemize}
\item the 1-jet at $0$, consisting of the function value $u(0)$ and the differential $\rmD u(0)$.

\item the directional second order derivatives at $0$, in the four directions $\pm e_1$ and $\pm e_2$,  which, by the way, are well defined.

\item the value of the second order derivative $\partial_1 \partial_2 u$, which, it turns out, must be the same in the four sectors.
\end{itemize}
\item $\rmC^0 \poly^1 \difo^1$ has dimension 10. The arrows arriving to and from this space are exterior derivatives.
\item $\poly^0 \difo^2$ has dimension $4$. The arrow to $\bbR$ is the following map:
\begin{equation}
u \mapsto u(++) - u(-+) + u(--) - u(+-).
\end{equation}
Here $u(ab)$ stands for the value of the two-form $u$ on $T_{ab}$, or more precisely $u[ae_1 + be_2](e_1, e_2)$.
\end{itemize}
\end{proposition}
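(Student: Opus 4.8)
The plan is to work directly with the polynomial coefficients on each of the four quadrants $T_{ab}$, translate each continuity requirement into linear conditions on those coefficients, and then deduce exactness from explicit dimension counts together with the fact that $S$ is contractible. The only subtle position is exactness at $\poly^0\difo^2$, where the alternating-sum functional appears.

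First I would treat $\rmC^1\poly^2\difo^0$. On $T_{ab}$ write the quadratic as $c_{ab} + \alpha_{ab}x_1 + \beta_{ab}x_2 + \tfrac12 A_{ab}x_1^2 + B_{ab}x_1x_2 + \tfrac12 C_{ab}x_2^2$, so that $A_{ab},B_{ab},C_{ab}$ are the second derivatives $\partial_1^2,\partial_1\partial_2,\partial_2^2$. Imposing $\rmC^1$ matching across each of the four half-axes amounts to equating the relevant one-variable polynomials together with their normal derivatives. A short bookkeeping then shows: the value data $c,\alpha,\beta$ are forced common to all four quadrants (the $1$-jet at $0$); $\partial_1^2$ is constant on each side of the $x_2$-axis, giving two values $A_\pm$; symmetrically $\partial_2^2$ gives $C_\pm$; and the mixed derivative $B$ is forced equal across all four axes, hence common. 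This yields exactly the $8$ free parameters $(c,\alpha,\beta,A_+,A_-,C_+,C_-,B)$, proving $\dim = 8$, the unisolvence of the stated degrees of freedom, and the well-definedness of the four directional second derivatives (each shared by the two quadrants meeting along the relevant half-axis).

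Then I would count the other two spaces. For $\rmC^0\poly^1\difo^1$, write $\omega = f\,\rmd x_1 + g\,\rmd x_2$ with $f_{ab} = \phi_{ab}+\mu_{ab}x_1+\nu_{ab}x_2$ and $g_{ab} = \psi_{ab}+\rho_{ab}x_1+\sigma_{ab}x_2$ affine on each quadrant. Continuity of both components across the half-axes forces the constant terms $\phi,\psi$ to be globally common ($2$ parameters), while each linear coefficient is tied only across the two parallel axes: $\mu,\rho$ split into left/right values $\mu_\pm,\rho_\pm$ and $\nu,\sigma$ into top/bottom values $\nu_\pm,\sigma_\pm$ ($8$ parameters), so $\dim = 10$; the space $\poly^0\difo^2$ carries no continuity constraint and has dimension $4$. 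Exactness at the outer ends and at $\rmC^1\poly^2\difo^0$ is routine: the first map injects constants; a closed $0$-form is locally constant hence globally constant by connectedness of $S$; and $\epsilon$ is visibly onto. For exactness at $\rmC^0\poly^1\difo^1$ I would use that $S$ is contractible, so a closed continuous piecewise-$\poly^1$ $1$-form has a primitive which is automatically $\rmC^1$ (its differential being the given continuous form); equivalently, the kernel of $\rmd$ on $\rmC^0\poly^1\difo^1$, cut out by $\rho_{ab}=\nu_{ab}$, has dimension $7$, matching the rank $8-1$ of $\rmd$ on $\rmC^1\poly^2\difo^0$.

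The main obstacle, and the only genuinely non-formal point, is exactness at $\poly^0\difo^2$: that the image of $\rmd$ equals $\ker\epsilon$. Here I would first compute $\epsilon\circ\rmd$ directly. On $T_{ab}$ one has $(\rmd\omega)(ab)=\rho_{ab}-\nu_{ab}$; the continuity relations make $\rho$ depend only on the left/right side and $\nu$ only on the top/bottom side, so the alternating combination $\rho_{++}-\rho_{-+}+\rho_{--}-\rho_{+-}$ and the analogous $\nu$-combination each telescope to zero, giving $\epsilon\circ\rmd = 0$ and hence image$(\rmd)\subseteq\ker\epsilon$. Since the image has dimension $10-7=3$ and $\ker\epsilon$ has dimension $4-1=3$, the two coincide, which simultaneously yields exactness at the final $\bbR$ (surjectivity of $\epsilon$). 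This alternating-sum functional is precisely the alignment obstruction flagged in the surrounding discussion: it fails to vanish identically exactly because the linear coefficients $\rho,\nu$ split along the two axis directions, and carefully tracking that splitting is the crux of the argument.
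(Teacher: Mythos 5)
Your proof is correct, and there is in fact nothing in the paper to compare it against: Proposition \ref{prop:alired} is stated there without proof (it is followed immediately by a remark, not a proof environment), so your coefficient bookkeeping supplies an argument the source omits. I checked the details and they hold up. The $\rmC^1$ matching across the four half-axes does force $(c,\alpha,\beta,B)$ common and splits $A$ into $A_{\pm}$ (left/right) and $C$ into $C_{\pm}$ (top/bottom), giving dimension $8$ and unisolvence of the stated data, including the well-definedness of the four directional second derivatives (each shared by the two sectors adjacent to the relevant half-axis). The count $2+8=10$ for $\rmC^0\poly^1\difo^1$ is right, with the crucial structural fact being exactly the one you isolate: $\mu,\rho$ depend only on the sign of $x_1$ and $\nu,\sigma$ only on the sign of $x_2$. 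One small point you state without justification: the kernel of $\rmd$ is cut out by the \emph{four} conditions $\rho_{ab}=\nu_{ab}$, which have rank $3$, not $4$ (they force the single chain $\rho_+=\nu_+=\rho_-=\nu_-$), and this is why the kernel has dimension $10-3=7$ rather than $6$; your alternative primitive argument via contractibility covers the same ground, so this is cosmetic. The computation $\epsilon\circ\rmd=0$ by telescoping of $\rho_+-\rho_-+\rho_--\rho_+$ and $\nu_+-\nu_++\nu_--\nu_-$ is exactly the alignment mechanism the paper alludes to in the remark following the proposition (where, without the alignment of opposite edges, the cokernel disappears and the first space drops to dimension $7$), and the final dimension match $10-7=3=4-1$ correctly closes exactness at $\poly^0\difo^2$ and at the terminal $\bbR$ simultaneously.
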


\begin{remark}
It seems that, if we have just four sectors, without alignments of the edges then the sequence:
\begin{equation}
\xymatrix{
0 \ar[r] & \bbR \ar[r] & \rmC^1 \poly^2 \difo^0 \ar[r] & \rmC^0 \poly^1 \difo^1 \ar[r]  & \poly^0 \difo^2  \ar[r]  & 0
}
\end{equation}
is exact and $\rmC^1\poly^2\difo^0$ has dimension only $7$.

The situation is reminiscent of \cite{ScoVog85}, which is interested in the last part of the complex, for polynomials of higher order.
\end{remark}
 
\paragraph{Some spaces of piecewise polynomials on simplexes.}

We first recall:
\begin{proposition}
Suppose $T=[V_0, \ldots, V_n]$ is an oriented simplex of dimension $n$. 

\noindent Suppose that $u$ is a constant $n$-form on $T$. Then:
\begin{equation}
\int_T u = \frac{1}{n!} u(V_1-V_0, V_2 - V_0, \ldots,  V_n - V_0).
\end{equation}
Suppose that $u$ is affine on $T$ and $0$ at the vertices $V_1, \dots, V_n$. Then:
\begin{equation}
\int_T u = \frac{1}{(n+1)!} u[V_0](V_1-V_0, V_2 - V_0, \ldots,  V_n - V_0).
\end{equation}
\end{proposition}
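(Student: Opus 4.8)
The plan is to reduce both identities to integration over the standard simplex via a single affine parametrization. Let $\Delta = \{(t_1,\ldots,t_n) : t_i \geq 0, \ \sum_{i=1}^n t_i \leq 1\}$ be the standard $n$-simplex, for which $\vol(\Delta) = 1/n!$, and let $\phi : \Delta \to |T|$ be the affine map $\phi(t) = V_0 + \sum_{i=1}^n t_i (V_i - V_0)$. This $\phi$ is an orientation-compatible diffeomorphism onto $|T|$ whose differential is constant, satisfying $\rmD\phi(x)\,\epsilon_i = V_i - V_0$ for the standard basis $\epsilon_1,\ldots,\epsilon_n$ of $\bbR^n$. The change-of-variables formula for differential forms then gives $\int_T u = \int_\Delta \phi^\star u$ for any $n$-form $u$.

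For the first identity, $u$ is constant, so by the definition of pullback (\ref{eq:pullbackdef}) we have $\phi^\star u = u(V_1 - V_0, \ldots, V_n - V_0)\, \rmd t_1 \wedge \cdots \wedge \rmd t_n$, where the coefficient is a scalar independent of $t$ (here I use that a top-degree form on $\bbR^n$ is determined by its value on $\epsilon_1,\ldots,\epsilon_n$, on which $\rmd t_1 \wedge \cdots \wedge \rmd t_n$ evaluates to $1$). Pulling this scalar out of the integral and using $\int_\Delta \rmd t_1 \wedge \cdots \wedge \rmd t_n = \vol(\Delta) = 1/n!$ yields the claim.

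For the second identity, the key observation is that an affine $n$-form is determined by its vertex values through the barycentric coordinates $\lambda_0, \ldots, \lambda_n$, namely $u = \sum_{i=0}^n \lambda_i\, u[V_i]$, each $u[V_i]$ being the constant $n$-form equal to the value of $u$ at $V_i$. Since $u$ vanishes at $V_1, \ldots, V_n$ by hypothesis, this collapses to $u = \lambda_0\, u[V_0]$. Under $\phi$ one has $\lambda_0 \circ \phi = 1 - \sum_{i=1}^n t_i$ and $\lambda_i \circ \phi = t_i$ for $i \geq 1$, so $\phi^\star u = \bigl(1 - \sum_{i=1}^n t_i\bigr)\, u[V_0](V_1 - V_0, \ldots, V_n - V_0)\, \rmd t_1 \wedge \cdots \wedge \rmd t_n$. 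Hence $\int_T u = u[V_0](V_1-V_0,\ldots,V_n-V_0)\,\int_\Delta \bigl(1 - \sum_{i=1}^n t_i\bigr)\, \rmd t$.

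It remains to evaluate $\int_\Delta \lambda_0 \, \rmd t$, where $\lambda_0 = 1 - \sum_{i=1}^n t_i$. Here I would invoke the symmetry of the standard simplex: the $n+1$ functions $\lambda_0, t_1, \ldots, t_n$ are permuted among themselves by the symmetries of $\Delta$, so all share a common integral, and since they sum to the constant $1$ that common value is $\vol(\Delta)/(n+1) = 1/(n+1)!$. Substituting gives the second formula. The only genuinely delicate points are orientation bookkeeping, namely ensuring $\phi$ respects the orientation of the oriented simplex $T$ so that the signed quantities $u(V_1-V_0,\ldots)$ carry the correct sign, and confirming the elementary integral $\int_\Delta \lambda_0 = 1/(n+1)!$; both are standard, and the latter can alternatively be computed directly by iterated integration.
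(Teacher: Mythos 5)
Your proof is correct, but there is nothing in the paper to compare it against: the proposition is introduced with ``We first recall:'' and stated as a known fact with no proof given. Your argument---pulling back along the affine parametrization $\phi:\Delta\to|T|$, writing the affine form as $\lambda_0\,u[V_0]$ by barycentric interpolation since it vanishes at $V_1,\dots,V_n$, and evaluating $\int_\Delta \lambda_0 = 1/(n+1)!$ by the symmetry of $\Delta$ (or directly as a Dirichlet integral)---is the standard one and is complete. The orientation point you flag is handled correctly: for the oriented simplex $[V_0,\dots,V_n]$ the basis $(V_1-V_0,\dots,V_n-V_0)$ is positively oriented by convention, which is precisely what makes $\phi$ orientation-compatible and gives the stated signs.
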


Let $S$ be a simplex of dimension $n$. All faces $T$ of $S$ are supposed equipped with an orientation and a chosen inpoint $W_T$.

We shall prove some results of which the following constitute a first case:
\begin{proposition} We have the following:
\begin{itemize}
\item Suppose $u \in \rmC^1 \poly^2 \difo^0(\refi_{0} (S))$, that $\rmd u $ is $0$ at the vertices of $S$, and that $u$ has the same value at all vertices of $S$.  Then $u$ is constant on $S$. 

\item Suppose $u \in \rmC^0 \poly^1 \difo^1(\refi_{0} (S))$ and that $\rmd u= 0$. If $u$ is $0$ at the vertices of $S$ and the pullback of $u$ to $1$-dimensional faces of $S$ has integral $0$,  then $u = 0$. 
\end{itemize}

\end{proposition}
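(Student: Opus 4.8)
The plan is to deduce the second statement from the first by means of the Poincar\'e operator, and to prove the first by induction on $n = \dim S$. Throughout write $W = W_S$ for the inpoint of $S$ and $\lambda_W$ for the barycentric coordinate of $\refi_0(S)$ associated with $W$, i.e.\ the continuous piecewise-affine hat function that equals $1$ at $W$ and vanishes on $\partial S$.

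For the reduction, given a closed $u \in \rmC^0_\rmd\poly^1\difo^1(\refi_0(S))$ I would set $f = \poincare_W u$. By the mapping property of the Poincar\'e operator attached to the inpoint (the case $p = 2$, $k = 1$ of the example in Section \ref{sec:poinc}), $f \in \rmC^1\poly^2\difo^0(\refi_0(S))$, and since $\rmd u = 0$ the homotopy identity (\ref{eq:poincare1}) gives $\rmd f = u$. The hypothesis that $u$ vanishes at the vertices then reads $\rmd f = 0$ at the vertices, while Stokes' formula on an edge $E = [V_i,V_j]$ gives $\int_E \pull_E u = f(V_j) - f(V_i)$, so the vanishing of all edge integrals forces $f$ to take one common value at every vertex of $S$. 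Thus $f$ satisfies exactly the hypotheses of the first statement; granting that statement, $f$ is constant and $u = \rmd f = 0$. Note this reduction needs no induction of its own.

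For the first statement I would induct on $n$, the cases $n \leq 1$ being a direct computation. For the inductive step, first treat the boundary: for each proper face $F$ the trace $\trace_F u$ lies in $\rmC^1\poly^2\difo^0(\refi_0(F))$ by the face-compatibility of $\refi_0$ (the second item of the Lemma on refinements), its differential vanishes at the vertices of $F$, and it has constant vertex values, so by the induction hypothesis $\trace_F u$ equals the common vertex value $c$. Hence $u \equiv c$ on $\partial S$, and after subtracting $c$ we may assume $u|_{\partial S} = 0$ while still $\rmd u = 0$ at the vertices. It remains to prove that such a $u$ vanishes identically.

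For this interior step I would use that $\refi_0(S)$ is the cone of $W$ over $\refi_0(\partial S)$: on each top cell $u$ is a quadratic vanishing on the facet $\{\lambda_W = 0\}$, hence divisible by $\lambda_W$, and the quotients glue to a single $v \in \rmC^0\poly^1(\refi_0(S))$ with $u = \lambda_W v$. Evaluating $\rmd u = v\,\rmd\lambda_W + \lambda_W\,\rmd v$ at a vertex $V$, where $\lambda_W(V) = 0$ but $\rmd\lambda_W(V) \neq 0$, the condition $\rmd u(V) = 0$ forces $v(V) = 0$ at every vertex of $S$. The hard part will be to conclude $v = 0$ from the requirement that $\lambda_W v$ be of class $\rmC^1$ across the internal faces of the refinement: this is the direct generalization to arbitrary dimension of the Clough--Tocher (HCT) unisolvence computation in the proof of Proposition \ref{prop:lowone}, and I expect it to be the principal obstacle. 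Concretely I would exploit, on each internal face, the jump relation $v\,\jump{\rmd\lambda_W} + \lambda_W\,\jump{\rmd v} = 0$ (a single scalar identity between affine functions, since both gradient jumps are conormal), together with $v = 0$ at the vertices, to propagate the vanishing of $v$ inward from $\partial S$ toward $W$; in dimension $2$ this collapses exactly to the ``$v$ proportional to $\lambda_W$, then $v = 0$'' argument already used for the Clough--Tocher element.
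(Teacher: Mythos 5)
Most of your proposal coincides with the paper's own argument. Your reduction of the second statement to the first via $f = \poincare_W u$ is exactly what the paper does (it writes $u = \rmd v$ with $v \in \rmC^1\poly^2\difo^0(\refi_0(S))$, equal vertex values of $v$ coming from one-dimensional Stokes), and your inductive boundary step, the factorization $u = \lambda_W v$ with $v \in \rmC^0\poly^1\difo^0(\refi_0(S))$, and the deduction $v(V)=0$ at every vertex from $\rmd u(V) = 0$ all match the paper's proof step for step.

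However, the decisive step is missing: you correctly identify concluding $v=0$ from the $\rmC^1$ condition as ``the principal obstacle,'' but you only sketch a face-by-face propagation scheme using the jump relation $v\,\jump{\rmd\lambda_W} + \lambda_W\,\jump{\rmd v} = 0$, without carrying it out, so as written the proof is incomplete precisely where the content lies. The paper closes this gap with a short structural observation that packages your propagation idea in one move: across any internal face of $\refi_0(S)$ lying in the interior of a single Alfeld cone $[W_S, F]$ ($F$ a facet of $S$), $\rmd\lambda_S$ is continuous, so the jump relation degenerates to $\lambda_S\,\jump{\rmd v} = 0$ and, since $\lambda_S$ does not vanish identically on such a face, $\jump{\rmd v} = 0$ there; hence $v$ is actually piecewise affine with respect to the \emph{coarser} split, $v \in \rmC^0\poly^1\difo^0(\refi_{n-1}(S))$. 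Such a $v$ is determined by its values at the $n+1$ vertices of $S$ and at $W_S$, so vanishing at the vertices forces $v = c\,\lambda_S$, whence $\rmd u = 2c\,\lambda_S\,\rmd\lambda_S$; the discontinuity of $\rmd\lambda_S$ at $W_S$, where $\lambda_S = 1$, then gives $c = 0$. Note this is not quite a ``propagation inward from $\partial S$ toward $W$'' as you anticipate: the faces where $\rmd\lambda_W$ genuinely jumps (those over $(n-2)$-faces of $S$) are never used to transport information; they are bypassed entirely by the coarsening argument, and only the single discontinuity at the inpoint is invoked at the end. If you want to salvage your own route you would need to make the inward propagation precise on exactly those jump faces, which is the part your sketch leaves unexamined.
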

\begin{proof}
By induction on $\dim S$. For $\dim S =0$ there is nothing to prove. Supposing now $n\geq 1$ and that the result has been proved for simplexes $S$ with $\dim S < n$ we proceed as follows, supposing $\dim S = n$.

\tpoint Choose $u \in \rmC^1 \poly^2 \difo^0(\refi_{0} (S))$ and suppose that $\rmd u = 0$ at the vertices. On any $(n-1)$-face of $S$ the pullback of $u$ is constant by the induction step. Hence $u$ is constant on $\partial T$. Substracting this constant, we may suppose that $\trace_{\partial T} u = 0$. Let $\lambda_S$ be the barycentric coordinate on $S$ attached to the inpoint, so that $\lambda_S \in \rmC^0 \poly^1 \difo^0(\refi_{n-1}(S))$. We can write $u = \lambda_S v$ for some $v \in \rmC^0 \poly^1 \difo^0(\refi_{0} (S))$. The condition that $u \in \rmC^1(S)$ then gives $v \in \rmC^0 \poly^1 \difo^0(\refi_{n-1} (S))$. We write $\rmd u = \lambda_S \rmd v + v \rmd \lambda_S$ and deduce that $v$ is zero at the vertices of $S$. Hence $v$ is proportional to $\lambda_S$ : $v = c\lambda_S$. We get $\rmd u = 2 c \lambda_S \rmd \lambda_S$.  Since $\rmd \lambda_S$ is discontinuous at the inpoint of $S$, we deduce that $c = 0$, hence $u = 0$.

\tpoint Choose $u \in \rmC^0 \poly^1 \difo^1(\refi_{0} (S))$ such that $\rmd u = 0$ on $S$,  $u$ is $0$ at the vertices of $S$ and the pullback of $u$ to $1$-dimensional faces of $S$ has integral $0$. Write $u = \rmd v$ with $v \in \rmC^1 \poly^2 \difo^0(\refi_{0} (S))$. We have that $\rmd v$ is zero at vertices. Moreover $v$ has the same values at all vertices, by the one-dimensional Stokes. By the preceding result $v$ is constant, so $u = 0$.  
\end{proof}

The purpose of the next three propositions is to extend these results to $k$-forms for higher $k$. Eventually we want to show that if certain degrees of freedom are $0$ then the $k$-form is $0$.

Our first result is of the type that if certain degrees of freedom are $0$ then the $k$-form is $0$ at the center of the simplex.

\begin{proposition}\label{prop:inpointzero}Le $S$ be a simplex with $\dim S \geq 1$. Choose $k \geq 1$.
Suppose $u \in \rmC^0 \poly^1 \difo^k(\refi_{k-1} (S))$ and that $\rmd u = 0$. If $u$ is $0$ at the vertices of $S$ and the pullback of $u$ to $k$-dimensional faces of $S$ has integral $0$,  then $u(W_S) = 0$. 
\end{proposition}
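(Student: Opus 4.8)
The plan is to argue by induction on $n = \dim S$. The cases $n < k$ are immediate: then $\refi_{k-1}(S) = \subcells(S)$ carries no interior point, $u$ is globally affine and vanishes at the affinely independent vertices, hence $u \equiv 0$. The base case $n = k$ is where the integral hypothesis is genuinely consumed: $u$ then has top degree, so (after using $\rmd u = 0$ and the vanishing vertex values to control the components transverse to $S$) its tangential part is a scalar density $f\,\omega$ with $f$ continuous and piecewise affine on the Alfeld split $\refi_{k-1}(S)$ and vanishing at the vertices of $S$. Such an $f$ is necessarily a multiple of the inpoint barycentric coordinate $\lambda_S$, and the single hypothesis $\int_S \pull_S u = 0$ forces that multiple, which is exactly $u(W_S)$, to vanish.

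For the inductive step ($n > k$) I would exploit that $\refi_{k-1}(S)$ is the cone of $W_S$ over $\refi_{k-1}(\partial S)$, so the straight-ray Poincar\'e operator $\poincare_{W_S}$ preserves piecewise polynomiality and, since $\rmd u = 0$ and $k \geq 1$, yields $u = \rmd v$ with $v = \poincare_{W_S} u$. The key structural fact is $v \ctr X_{W_S} = 0$ (contracting twice by the radial field kills an alternating form), so $\pull v$ vanishes on every cell that is a cone with apex $W_S$, the radial direction then being tangent. Applying Stokes to $u = \rmd v$ over the $k$-dimensional cone $W_S * \rho$, for $\rho$ a $(k-1)$-cell of $\refi_{k-1}(\partial S)$, all radial boundary faces drop out and only the outer face survives:
\begin{equation}
\int_{W_S * \rho} \pull u = \int_\rho \pull v .
\end{equation}
On the top cone cell containing $W_S * \rho$ one writes $u = a + b$ with $a = u(W_S)$ the common constant part and $b$ its affine part, so the left-hand side is, via the elementary integration formulas for constant and affine forms on a simplex recalled just above, an explicit linear functional of $a$ plus a lower, $b$-dependent correction; the right-hand side is pure boundary data. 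The induction hypothesis applied to the faces $F$ of $S$ gives $(\pull_F u)(W_F) = 0$ and thereby control of $v$ on $\partial S$, which together with the vanishing of the $k$-face integrals of $u$ should render the right-hand side controllable.

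Running these relations over enough cells $\rho$ through $W_S$ should then let me solve for $a$; the cleanest way to finish is to reduce, after subtracting the pieces pinned down above, to a form proportional to one built from $\lambda_S$, whose differential $\rmd \lambda_S$ is discontinuous exactly at $W_S$, and to read off from $\rmd u = 0$ (equivalently, continuity across the cone cells at $W_S$) that the proportionality constant $a = u(W_S)$ must vanish --- this is the exact generalization of the $\rmC^1\poly^2$ argument of Proposition~\ref{prop:lowone}. The hard part will be the interior bookkeeping at $W_S$: because $u$ is $\alter^k(\bbV)$-valued whereas the face induction directly controls only the pullback (tangential) components on lower-dimensional faces, I must use $\rmd u = 0$ and the cone geometry to pin down the remaining transverse components of $u(W_S)$, while keeping careful track of the orientation signs in the Stokes cancellations and of the affine correction terms $b$.
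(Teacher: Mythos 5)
Your base case $n=k$ is sound, and the cone identity you invoke, $\int_{W_S * \rho}\pull u = \int_\rho \pull v$ with $v = \poincare_{W_S}u$ and $v \ctr X_{W_S} = 0$, is correct as far as it goes. But the inductive step has a genuine gap, and it sits exactly where you flag ``the hard part''. The displayed identity is the defining property of cone integration: $v$ at a boundary point is an integral of $u$ along the ray through that point, so $\int_\rho \pull v$ is \emph{not} pure boundary data --- it encodes the same interior information as the left-hand side, and the equation constrains nothing by itself. The face induction yields only $(\pull_F u)(W_F)=0$, i.e.\ the tangential components of $u$ at the inpoints of proper faces; this controls neither $v$ on $\partial S$ (which integrates $u$ along interior rays) nor the affine corrections $b$ on cone cells $W_S * \rho$, because the non-apex vertices of such $\rho$ include inpoints $W_F$ at which the transverse components of $u$ remain unknown. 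So the system of relations you propose to run over ``enough cells $\rho$'' is underdetermined, and nothing in the sketch supplies the missing equations; likewise the $\lambda_S$-discontinuity finale is a genuine generalization problem, not a routine one --- you give no construction of the required $\lambda_S$-proportional reduction for $k$-forms with $k\geq 2$.

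The paper's proof is not an induction on dimension (except for $k=1$, delegated to the preceding scalar-potential propositions) and its two key choices are precisely what your plan lacks. First, it integrates $u$ only over the $k$-simplices $[W_S,U]$ with $U$ a $(k-1)$-face \emph{of $S$ itself}: each such simplex is a single cell of $\refi_{k-1}(S)$, and all its non-apex vertices are vertices of $S$, where the \emph{full} value of $u$ vanishes by hypothesis --- so each integral $c_U$ is, up to a factorial, exactly $u[W_S](V_1-W_S,\dots,V_k-W_S)$, with no transverse-component ambiguity. Second, the resulting cochain $c_\bs \in \calC^{k-1}(S)$ is killed by \emph{two} independent systems of linear relations: $\delta c_\bs = 0$, from Stokes on $[W_S,T]$ over $k$-faces $T$ together with $\rmd u = 0$ and the hypothesis $\int_T u = 0$; and the weighted boundary identity $\delta' \alpha_{k-1} c_\bs = 0$, where $\alpha_{k-1}$ is the diagonal operator built from the barycentric coordinates of $W_S$, obtained from the relation $\sum_{V} \alpha_V (V - W_S) = 0$ and the vertex vanishing of $u$. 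Writing $c_\bs = \delta d_\bs$ (exactness of $\calC^\bs(S)$ at index $k-1 \geq 1$, which is where $k\geq 2$ enters) and using positive definiteness of the weights forces $c_\bs = 0$, hence $u(W_S)=0$. Your sketch produces at best the analogue of $\delta c_\bs = 0$, whose kernel is large; without a counterpart of the weighted $\delta'$ identity you cannot solve for $a = u(W_S)$, so the proposal as it stands does not close.
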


\begin{proof}
\tpoint If $k=1$ the result was proved in the preceding two propositions. 

We suppose now that $k\geq 2$. The strategy is to prove that the pullback of $u$ to the $k$-simplices joining $W_S$ to the $(k-1)$ faces of $S$ is zero. The integrals of these pullbacks constitute a $(k-1)$-cochain on $S$, and we show that its coboundary is zero and that its (weighted) boundary is also $0$.

\tpoint For any $(k-1)$-face $U$ of $S$ define (the real number):
\begin{equation}
c_U = \int_{[W_S, U]} u.
\end{equation}
This defines a cochain $c_\bs \in \calC^{k-1}(S)$. 

-- Suppose first $k < n$. We let $T$ be a $k$-face of $S$ and write, using $\rmd u = 0$ and Stokes:
\begin{align}
0 & = \int_{[W_S, T]} \rmd u,\\
&  = \sum_{U \in \subcells^{k-1}(T)} \orient([W_S, T], [W_S, U]) \int_{[W_S, U]} u,\\
& = \sum_{U \in \subcells^{k-1}(T)} \orient(T, U) \int_{[W_S, U]} u,
\end{align}
because the $k$-faces of $[W_S, T]$ are those containing $W_S$, in addition to $T$, where the integral of $u$ is $0$ by hypothesis.

This identity can be rewritten, in terms of the simplicial coboundary operator:
\begin{align}
\delta c_\bs = 0 \in \calC^{k}(S).
\end{align}

-- For $k=n$ this identity also holds, and just expresses that $\int_S u = 0$.

\tpoint For each vertex $V$ of $S$, let $\alpha_V$ denote the barycentric coordinate of $W_S$ in $S$.

Let $T$ be a $(k-2)$-face of $S$ and denote its vertices $V_0, \ldots, V_{k-2}$. We write, using that $u$ is $0$ at vertices of $S$, and summing over vertices $V$ in $S$ not in $T$:
\begin{align}
& \sum_{V \not \in T} \alpha_V \int_{[W_S, T, V]} u \\
= & \frac{1}{n!} \sum_{V \not \in T} \alpha_V u[W_S](V_0 - W_S, \ldots, V_{k-2} - W_S, V -W_S),\\
= &  \frac{1}{n!} u[W_S](V_0 - W_S, \ldots, V_{k-2} - W_S, \sum_{V \not \in T} \alpha_V (V -W_S)). 
\end{align}
Then we may substitute:
\begin{equation}
\sum_{V \not \in T} \alpha_V (V- W_S) = - \sum_{V \in T} \alpha_V  (V -W_S),
\end{equation}
which gives:
\begin{equation}
\sum_{V \not \in T} \alpha_V \int_{[W_S, T, V]} u = 0.
\end{equation}
This identity can be written:
\begin{equation}\label{eq:dprimealpha}
\sum_{U \in \subcells^{k-1}(S)} \alpha_{U \setminus T} \orient (U,T) c_{U} = 0.
\end{equation}

\tpoint
If it weren't for the weights $\alpha_{U\setminus T}$, this identity would be $\delta' c_\bs = 0$, where $\delta' : \calC^{k-1}(S) \to \calC^{k-2}(S)$ is the boundary operator, whose matrix in the canonical basis is the transpose of the matrix of $\delta$. Since $\delta c_\bs = 0$ and $\calC^\bs(S)$ is exact (at index $k-1 \geq 1$), we would conclude immediately that $c_\bs =0$. 

To account for the weights defined by $\alpha$, we define, on any subsimplex $T$ of $S$:
\begin{equation}
\alpha_T = \prod_{V \in \subcells^0(T)} \alpha_V,
\end{equation}
and rewrite (\ref{eq:dprimealpha}) as:
\begin{equation}
\sum_{U \in \subcells^{k-1}(S) } (\alpha_{U}/\alpha_T) \orient (U,T) c_{U} = 0.
\end{equation}
Let $\alpha_l$ be the operator  $\calC^l(S) \to \calC^l(S)$, whose matrix in the canonical basis is diagonal, with entry $\alpha_T$ at index $(T,T)$, $T \in \subcells^l(S)$. We obtain:
\begin{equation}
(\alpha_{k-2})^{-1} \delta' \alpha_{k-1} c_\bs = 0, 
\end{equation}
hence:
\begin{equation}
\delta' \alpha_{k-1} c_\bs = 0. 
\end{equation}

\tpoint Now, since $\delta c_\bs = 0$, we can choose $d_\bs \in \calC^{k-2}(S)$ such that $\delta d_\bs = c_\bs$. We have $\delta' \alpha_{k-1} \delta d_\bs = 0$. Since $\alpha_{k-1}$ is positive definite, we conclude $c_\bs = 0$.

\tpoint Since $u$ is $0$ at vertices, for any $(k-1)$-simplex $U = [V_1, \ldots V_k]$ we have:
\begin{equation}
 0 = c_U = (k+1)! u[W_S](V_1 - W_S , \ldots , V_k - W_S).
 \end{equation}
There are sufficiently many such $(k-1)$-simplexes to conclude that $u[W_S] = 0$.  
\end{proof}

The above result can also be applied to boundary simplexes. However in that case it will not give information about transverse components on the boundary (only the pullback to the boundary). Our second result will fill this gap. That is why the refinement used here is $\refi_k(S)$ not $\refi_{k-1}(S)$.

\begin{proposition}\label{prop:duconst}
Suppose $u\in \rmC^0 \poly^1 \difo^k(\refi_{k} (S))$ and that $\rmd u$ is constant on $S$. If $u$ is $0$ at the vertices of $S$ then $u$ is $0$ everywhere.
\end{proposition}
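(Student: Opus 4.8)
The plan is to induct on $\dim S$, the cases $\dim S \le k$ and $k=0$ being immediate (there $u$ is a single affine object vanishing at the $\dim S + 1$ vertices of $S$, hence zero). So fix $n = \dim S > k \ge 1$ and assume the statement on simplices of dimension $< n$. The first step is to clear the tangential data on the boundary. For a proper face $F$ of $S$ one has $\refi_k(F) = \{T' \in \refi_k(S) : |T'| \subseteq |F|\}$, so $\pull_F u \in \rmC^0\poly^1\difo^k(\refi_k(F))$; moreover $\rmd \pull_F u = \pull_F \rmd u$ is constant and $\pull_F u$ vanishes at the vertices of $F$. The induction hypothesis then gives $\pull_F u = 0$ for every proper face $F$, so the pullback of $u$ to all of $\partial S$ vanishes.

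Next I would show $\rmd u = 0$. For each $(k+1)$-face $T = [V_0,\dots,V_{k+1}]$ of $S$, Stokes' formula and the previous step give $\int_T \rmd u = \sum_{F \in \partial T}\orient(T,F)\int_F \pull_F u = 0$, and since $\rmd u$ is constant the integration formula for constant forms yields $(\rmd u)(V_1-V_0,\dots,V_{k+1}-V_0)=0$. Letting $T$ range over the $(k+1)$-faces through a fixed vertex $V_0$, the tuples $(V_{i_1}-V_0,\dots)$ exhaust the $(k+1)$-subsets of a basis of $\bbV$, whence $\rmd u = 0$.

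Because $u$ is continuous and affine on each ($n$-dimensional) cell of $\refi_k(S)$, it vanishes identically as soon as it vanishes at every vertex of $\refi_k(S)$; these vertices are the original vertices of $S$, where $u=0$ by hypothesis, together with the inpoints $W_F$ of the faces $F$ with $\dim F \ge k+1$. Applying Proposition \ref{prop:inpointzero} to $S$ itself is now legitimate — every element of $\rmC^0\poly^1\difo^k(\refi_k(S))$ is a fortiori piecewise affine for the finer refinement $\refi_{k-1}(S)$, and the hypotheses $\rmd u = 0$, vanishing at vertices and vanishing pullback-integrals on the (proper) $k$-faces are all in force — and gives $u[W_S]=0$.

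The crux, and the expected main obstacle, is the remaining inpoints $W_F$ with $k+1 \le \dim F \le n-1$. Here Proposition \ref{prop:inpointzero} applied to $F$ only controls $\pull_F u$, i.e. the part of $u[W_F]$ tangential to $F$, which already vanishes; it is silent on the transverse directions. This is exactly the gap the refinement $\refi_k(S)$ is designed to close, as it provides an inpoint on every face of dimension $\ge k+1$ and hence the simplices $[W_G,W_F,\dots]$ of $\refi_k(S)$ joining $W_F$ to the inpoint of a larger face $G \supsetneq F$. I would proceed by downward induction on $\dim F$: to evaluate $u[W_F]$ on a transverse direction $W_G - W_F$ together with tangential ones, I express this through the integral of $u$ over a $k$-simplex incident to $W_F$, and relate that integral by Stokes on a $(k+1)$-simplex $[W_G,W_F,\dots]$ to integrals over faces on which $u$ is already known to vanish (those tangential to $F$, and those whose only possibly-nonzero vertex data would sit at $W_G$, where $u[W_G]=0$ by the downward induction, or at original vertices). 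Since the directions $W_G-W_F$, as $G$ runs over the faces of dimension $\dim F + 1$ containing $F$, span a complement of the tangent space of $F$ in $\bbV$, this forces $u[W_F]=0$. The delicate point is organizing these Stokes relations so that each transverse component is isolated — a cochain computation in the spirit of the proof of Proposition \ref{prop:inpointzero}, but now reaching out of $F$ rather than staying tangential.
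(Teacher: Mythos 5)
Your proposal is sound, and parallels the paper, up through $u[W_S]=0$: the paper likewise inducts on $\dim S$, kills boundary data by the induction hypothesis, deduces $\rmd u=0$ from Stokes on the $(k+1)$-faces, reduces everything to the vertex values of $\refi_k(S)$, and gets the inpoint value from Proposition \ref{prop:inpointzero}. But the step you yourself call the crux --- the transverse part of $u[W_F]$ at inpoints of intermediate faces $F$ with $k+1 \le \dim F \le n-1$ --- is left as an unexecuted sketch, and the sketch as written does not close. Concretely: take $G \supsetneq F$ and a $(k-1)$-subface $[V_0,\ldots,V_{k-1}]$ of $F$, and apply Stokes with $\rmd u = 0$ on the $(k+1)$-simplex $[W_G, W_F, V_0, \ldots, V_{k-1}]$ of $\refi_k(S)$. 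The face omitting $W_G$ lies in $F$ and contributes $0$ since $\pull_F u = 0$; the face omitting $W_F$ is affine with all vertex values $0$ (using $u[W_G]=0$ from your downward induction), so it contributes $0$; but the $k$ faces omitting one $V_i$ each contribute a multiple of $u[W_F]$ evaluated on a tuple containing $W_G - W_F$, so you obtain only \emph{one} scalar relation among $k$ unknown transverse components, not their individual vanishing. To isolate components you would need an analogue of the full cochain argument of Proposition \ref{prop:inpointzero} --- both the coboundary identity and the weighted boundary identity --- carried out transversally to $F$, and in general the chains of $\refi_k(S)$ through $W_F$ also pass through inpoints of smaller faces whose values are still unknown, coupling the levels of your downward induction. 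None of this is produced.

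The paper closes exactly this gap by a different and cheaper device, which you could adopt to repair the proof: an \emph{upward} induction on the dimension $l$ of faces with vanishing pullback, where the transverse value at $W_{T'}$, for $T'$ an $l$-face of an $(l+1)$-face $T$, is obtained by contraction. Setting $v = \pull_T u$ and $w = \pull_{T'}\bigl(v \ctr (W_T - W_{T'})\bigr)$, Cartan's formula identifies $\rmd w$ with the derivative of $v$ in the constant direction $W_T - W_{T'}$, which is continuous and piecewise constant, hence constant; so $w \in \rmC^0\poly^1\difo^{k-1}(\refi_k(T')) \subseteq \rmC^0\poly^1\difo^{k-1}(\refi_{k-1}(T'))$ satisfies the hypotheses of the proposition \emph{in degree $k-1$} on a simplex of dimension $l < n$, giving $w = 0$ and hence, combined with $\pull_{T'} v = 0$, the full vanishing $v(W_{T'}) = 0$; inpoints of faces of dimension strictly between $k$ and $l$ are then handled for free, since such a face lies in two distinct $l$-faces of $T$ whose pullbacks of $v$ vanish. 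Note this forces the outer induction hypothesis to be quantified over \emph{all} form degrees simultaneously, whereas you fixed $k \ge 1$ at the outset; your induction as set up cannot even invoke the degree-$(k-1)$ case. So the verdict is: correct and essentially the paper's argument up to $u[W_S]=0$, but a genuine gap at the intermediate inpoints, whose repair requires either the nontrivial transverse cochain computation you defer, or the paper's contraction step together with a strengthened induction hypothesis.
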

\begin{proof}
\tpoint For $k= 0$ the claim is just that an affine function is determined by its vertex values. So we suppose $ k \geq 1$ from now on.

\tpoint We proceed by induction on $\dim S$. Choose  $n\geq 1$ and suppose that the proposition has been proved for simplices $S$ with $\dim S < n$. We call this the outer induction hypothesis. Let $S$ be a simplex of dimension $\dim S = n$.

\tpoint For any $l$-face $T$ of $S$ with $l \leq k$, the trace of $u$ on $T$ is in $\rmC^0 \poly^1 (\refi_{k} (T)) \otimes \alter^k(\bbV)$ and since $T$ is not refined, $u$ is affine on $T$. Therefore the trace of $u$ is $0$. 

In particular the pullback of $u$ to any $k$-face is $0$. Therefore, for any $(k+1)$-face $T$ of $S$ the pullback $v$ of $u$ satisfies $\int_T \rmd v = 0$. The constant $\rmd u$ on $S$ has integral $0$ on all $(k+1)$-faces of $S$. Therefore $\rmd u = 0$ on $S$.

\tpoint Suppose we have proved that the pullback of $u$ to $l$-faces of $S$ is $0$, for some $l$ with $n > l \geq k$. We call this the inner induction hypothesis. Let $T$ be an $(l+1)$-face of $S$, and let $v$ be the pullback of $u$ to $T$. From Proposition \ref{prop:inpointzero} we conclude that $v(W_T)=0$. 

\noindent If $l = k$ then we conclude that $v=0$. \\
For $l >k$ we need to check that $v(W_{T'}) =0$ for faces $T'$ of $T$ of dimension $m$ with $k < m \leq l$.

-- Case $m=l$: Let $T'$ be an $l$-face of $T$. Let $w$ be the pullback to $T'$ of the $(k-1)$-form $v\ctr(W_T - W_{T'})$. We have $w \in \rmC^0 \poly^1 \difo^{k-1}(\refi_{k} (T'))$. We also notice that $\rmd w$ is piecewise constant on $T'$. Cartan's formula shows that $\rmd w$ is the derivative of $v$ in direction $(W_T - W_{T'})$, which is continuous. Therefore $\rmd w$ is constant. By the outer induction hypothesis, $w= 0$. Now we are in the situation that both $v$ and $v\ctr(W_T - W_{T'})$ have pullback $0$ to $T'$. Therefore $v(W_{T'}) = 0$.

-- Case $k < m <l$: If $T''$ is an $m$-face of $T$, for some $k < m <l$, $T''$ is included in at least two distinct $l$-faces of $T$. Since the pullback to these of $v$ is $0$, we deduce $v(W_{T''})=0$.

We deduce that $v=0$ on $T$. This completes the inner induction (on $l$), which may be followed up to the case $l = n-1$. There the conclusion is $u=0$, and this completes the outer induction step (on $n$).
\end{proof}

\begin{remark}\label{rem:scholie}
In other words the proposition says that if $u\in \rmC^0 \poly^1 \difo^k(\refi_{k} (S))$ and $\rmd u$ is constant, then $u$ is affine on $S$. The reciprocal is trivial.
\end{remark}

Finally we combine the preceding two propositions to prove the following.

\begin{proposition}\label{prop:unistep}
Suppose $u \in \rmC^0 \poly^1 \difo^k(\refi_{k-1} (S))$, and that $ \rmd u= 0$. If $u$ is zero at the vertices of $S$ and for any $k$-face $T$ of $S$, $\int_T u =0$, then $u = 0$.
\end{proposition}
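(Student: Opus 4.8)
The plan is to prove, by induction on the dimension $l$ of the faces, that $\pull_T u = 0$ for every $l$-face $T$ of $S$, starting from $l = k$ and ending at $l = \dim S$, where $T = S$ and $\pull_S u = u$ is the desired conclusion. The two tools are Proposition \ref{prop:inpointzero}, which yields the full value of a closed form at the inpoint of a simplex from the vanishing of its vertex values and of its integrals over $k$-faces, and Proposition \ref{prop:duconst}, which I shall apply to $(k-1)$-forms obtained by contracting $u$ with fixed directions in order to capture the components of $u$ transverse to lower faces. At every stage the relevant hypotheses transfer to a face $T$: by the lemma on refinements of subsimplices $\pull_T u \in \rmC^0\poly^1\difo^k(\refi_{k-1}(T))$, one has $\rmd \pull_T u = \pull_T \rmd u = 0$, the form $\pull_T u$ vanishes at the vertices of $T$, and $\int_{T'}\pull_T u = \int_{T'} u = 0$ for each $k$-face $T'$ of $T$.

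For the base case $l = k$, fix a $k$-face $T$ and write $v = \pull_T u$, a top-degree form on the $k$-simplex $T$, for which $\refi_{k-1}(T)$ is the cone of $W_T$ over $\partial T$, with vertex set equal to the vertices of $T$ together with $W_T$. Proposition \ref{prop:inpointzero}, applied to $T$ and using $\int_T v = 0$, gives $v(W_T) = 0$; since $v$ vanishes at the remaining vertices of $T$ as well and is continuous and affine on each cell of $\refi_{k-1}(T)$, it is determined by its vertex values and hence vanishes identically. Thus $\pull_T u = 0$ on every $k$-face.

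For the step, suppose $\pull_F u = 0$ for all $l$-faces $F$, and let $T$ be an $(l+1)$-face with $v = \pull_T u$. The vertices of $\refi_{k-1}(T)$ are the vertices of $T$, the inpoint $W_T$, and the inpoints $W_{T''}$ of faces $T''$ with $k \le \dim T'' \le l$, and it suffices to show $v$ vanishes at each of them. The vertices of $T$ are handled by hypothesis and $W_T$ by Proposition \ref{prop:inpointzero}. For a face $T''$ of dimension $m = l$ I follow the device in the proof of Proposition \ref{prop:duconst}: the pullback to $T''$ of the $(k-1)$-form $v \ctr (W_T - W_{T''})$ lies in $\rmC^0\poly^1\difo^{k-1}(\refi_{k-1}(T''))$, vanishes at the vertices of $T''$, and, by Cartan's formula together with $\rmd v = 0$, has constant exterior derivative (the continuous directional derivative of $v$ along $W_T - W_{T''}$); Proposition \ref{prop:duconst} forces it to vanish, and together with $\pull_{T''} v = 0$ this gives $v(W_{T''}) = 0$. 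For $k \le m < l$ one uses instead that $T''$ lies in several $l$-faces, on each of which the pullback of $v$ already vanishes, descending in $m$ and contracting further where a single transverse direction does not suffice, exactly as in Proposition \ref{prop:duconst}. Once $v$ is known to vanish at every vertex of $\refi_{k-1}(T)$, continuity and piecewise affineness give $v = 0$, which closes the induction.

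The main obstacle is the control of the components of $u$ transverse to a face at that face's inpoint, since Proposition \ref{prop:inpointzero} applied to a proper face returns only the tangential (pullback) part there; this is precisely the gap filled by Proposition \ref{prop:duconst} through the contraction by $W_T - W_{T''}$ and the verification that the resulting $(k-1)$-form has constant exterior derivative. A point requiring extra care, absent from Proposition \ref{prop:duconst}, is that here the refinement is $\refi_{k-1}$ rather than $\refi_k$, so the inpoints of $k$-faces are genuine vertices of the refinement whose transverse components must also be killed; this is why the hypothesis that $\int_{T'} u = 0$ on every $k$-face is needed, feeding Proposition \ref{prop:inpointzero} at the bottom of the induction. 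Everything else reduces to the routine fact that a continuous piecewise affine form is determined by its values at the vertices of the refinement.
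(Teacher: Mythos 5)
Your proposal is correct and takes essentially the same route as the paper: both rest on Proposition \ref{prop:inpointzero} at each inpoint, on the contraction by $W_T - W_{T''}$ fed into Proposition \ref{prop:duconst} to kill the transverse components at codimension-one inpoints, and on containment in higher-dimensional faces for the deeper inpoints, with the conclusion coming from piecewise affineness on $\refi_{k-1}$. The only difference is organizational — you run a single induction on the face dimension $l$ from $k$ up to $\dim S$, whereas the paper inducts on $\dim S$ and applies the statement itself to the $(n-1)$-faces — so the two arguments coincide step for step.
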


\begin{proof}
We proceed by induction. We suppose that the proposition has been proved for any $S$ of dimension $n-1$, and let $S$ be a simplex of dimension $n$.

From Proposition \ref{prop:inpointzero}, we deduce that $u(W_S)=0$. 

For $k=n$ this is enough to conclude that $u=0$.

Suppose $k < n$. We want to check that $u(W_T)=0$ for any $m$-face $T$ of $S$ with $k \leq m <n$. We distinguish two cases for $m = \dim T$:

-- Case $m = n-1$: We know that the pullback of $u$ to $T$ is $0$ by the induction hypothesis.  Let $w$ be the pullback of $u \ctr(W_S - W_T)$ to $T$. Then $w$ is in $\rmC^0 \poly^1 \difo^{k-1}(\refi_{k-1} (T))$ and $\rmd w$ is constant.  From Proposition \ref{prop:duconst} it follows that $w$ is zero. We conclude that $u(W_T)=0$.

-- Case $k\leq  m< n-1$: Then $T$ is included in two distinct $(n-1)$-faces of $S$, on which the pullback of $u$ is zero. We deduce that $u(W_T)=0$.
\end{proof}

\section{Finite element spaces in high dimension\label{sec:spacehigh}}

\paragraph{A continuous finite element complex.} 
We consider the following spaces, on a simplex $S$, for $k \geq 1$:
\begin{equation}\label{def:kspace}
K^k(S)  = \{u \in \rmC^0 \poly^1 \difo^k(\refi_{k-1} (S)) \ : \ \rmd u = 0\}.
\end{equation}
For $k= 0$ we put:
\begin{align}
K^0(S) & = \{ u \in \rmC^0 \poly^1 \difo^0(\refi_{0} (S)) \ : \ \rmd u = 0\},\\
 & = \{ u : S \to \bbR \ : \ u \textrm{ is constant} \}.
\end{align}

We let $\poincare_S$ denote the Poincar\'e operator associated with the inpoint of $S$. 
We define the space of $k$-forms:
\begin{equation}\label{eq:adefk}
A^k(S) = K^k(S) + \poincare_{S} K^{k+1}(S).
\end{equation}
We want to prove that this choice provides a good finite element complex, in the sense that it defines a compatible finite element system.

We first notice:
\begin{proposition} We have that:\\
-- The sum (\ref{eq:adefk}) is direct.\\
-- The following sequence is exact:
\begin{equation}
\xymatrix{
0 \ar[r] & \bbR \ar[r] & A^0(S) \ar[r] & A^1(S) \ar[r] & \ldots  \ar[r] & A^n(S) \ar[r] & 0.
}
\end{equation}
\end{proposition}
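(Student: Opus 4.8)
The plan is to reproduce, in this mixed-refinement setting, the argument of the Proposition in \S\ref{sec:poinc} on the spaces $W^\bs = V^\bs + \poincare_{\bullet+1}V^{\bullet+1}$. The only structural facts I will use are the homotopy identities (\ref{eq:poincare1}) and (\ref{eq:poincare2}) for the Poincar\'e operator $\poincare_S$ attached to the inpoint $W_S$, together with the defining property that every element of $K^\bs(S)$ is closed. Since $S$ is starshaped with respect to $W_S$, the operator $\poincare_S$ is well defined on the composite forms at hand through the integral formula (\ref{eq:poinexpl}), and the identities $\id = \rmd\poincare_S + \poincare_S\rmd$ on $k$-forms with $k\geq 1$, and $u - u(W_S) = \poincare_S\rmd u$ on $0$-forms, continue to hold for continuous piecewise-polynomial forms, exactly as they are already used for composite forms in the proof of Proposition \ref{prop:lowone}.

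First I would record the basic mapping statements. For $v\in K^{k+1}(S)$ one has $\rmd v = 0$, so (\ref{eq:poincare1}) gives
\begin{equation}
\rmd\,\poincare_S v = v - \poincare_S\,\rmd v = v .
\end{equation}
Consequently, for $u = w + \poincare_S v$ with $w\in K^k(S)$ and $v\in K^{k+1}(S)$, we obtain $\rmd u = \rmd w + \rmd\poincare_S v = v$, so $\rmd$ maps $A^k(S) = K^k(S) + \poincare_S K^{k+1}(S)$ into $K^{k+1}(S)\subseteq A^{k+1}(S)$; in particular $A^\bs(S)$ is a complex and $\rmd^2=0$ is automatic. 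The same computation yields directness: given $u\in A^k(S)$, the component $v = \rmd u$ is determined by $u$, hence so is $w = u - \poincare_S v$, so the decomposition is unique and the sum (\ref{eq:adefk}) is direct. Equivalently, if $u\in K^k(S)\cap\poincare_S K^{k+1}(S)$, write $u = \poincare_S v$ with $v\in K^{k+1}(S)$; then $v = \rmd\poincare_S v = \rmd u = 0$ because $u$ is closed, so $u=0$.

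For exactness I would identify cycles and boundaries. From $\rmd u = v$ we read off $\ker(\rmd\colon A^k(S)\to A^{k+1}(S)) = K^k(S)$ for every $k\geq 0$. On the other hand, for $k\geq 1$ and $v'\in K^k(S)$ the identity $\rmd\poincare_S v' = v'$ shows $K^k(S)\subseteq \rmd A^{k-1}(S)$, while the reverse inclusion is the mapping statement above; hence $\rmd A^{k-1}(S) = K^k(S)$. Therefore the cohomology at index $k\geq 1$ is $K^k(S)/K^k(S) = 0$. At index $0$, $\ker\rmd = K^0(S)$ is the space of constants, which is exactly the image of $\bbR\to A^0(S)$, giving both exactness there and injectivity of the inclusion of constants. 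At the top, $K^{n+1}(S)=0$ forces $A^n(S) = K^n(S)$, and $\rmd A^{n-1}(S) = K^n(S) = A^n(S)$ gives surjectivity onto $A^n(S)$, closing the sequence.

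The genuinely technical point — and the one I expect to be the main obstacle — is not the algebra above but verifying that $\poincare_S$ keeps us inside the announced function spaces, i.e.\ that $\poincare_S K^{k+1}(S)$ consists of continuous piecewise polynomials on the finer refinement $\refi_{k-1}(S)$, so that $A^k(S)$ is a bona fide space of composite-polynomial $k$-forms and the arrows of (\ref{eq:adefk}) are between comparable spaces. This rests on the cone structure of the refinements: every top cell of $\refi_{k-1}(S)$ incident to $W_S$ has the form $[W_S,\sigma]$ with $\sigma$ a cell of $\refi_{k-1}(\partial S)$, and $\refi_{k-1}(S)$ refines $\refi_k(S)$; hence the radial segments from $W_S$ used in (\ref{eq:poinexpl}) stay within single cells of $\refi_k(S)$, on which $v\in K^{k+1}(S)$ is affine. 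The integral then produces a piecewise polynomial, and continuity across interfaces must be deduced from the continuity of $v$. This is precisely where the construction departs from the single-refinement template of \S\ref{sec:poinc}, and where the choice of $\refi_{k-1}(S)$ (rather than $\refi_k(S)$) as the target refinement is forced.
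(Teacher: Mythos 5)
Your proposal is correct and follows essentially the same route as the paper, whose proof is a one-line gloss (``using essentially that the elements of $K^k(S)$ and $K^{k+1}(S)$ have $0$ exterior derivative'') of exactly your homotopy-identity argument: $\rmd \poincare_S v = v$ on closed $v$, hence $\rmd u$ determines the $K^{k+1}$-component, giving directness, $\ker \rmd = K^k(S) = \rmd A^{k-1}(S)$, and exactness. Your additional verification that $\poincare_S$ is well defined on composite forms and respects the cone structure of the refinements is a point the paper leaves implicit, and you identify it correctly.
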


\begin{proof}
Using essentially that the elements of $K^k(S)$ and $K^{k+1}(S)$ have $0$ exterior derivative.
\end{proof}

\begin{proposition}\label{prop:overdeta}
On $A^k(S)$ the degrees of freedom consisting of: \\
-- values at vertices,\\
-- values of the exterior derivative at vertices,\\
-- integrals on $k$-dimensional faces of $S$ (for $k \geq 1$), \\
overdetermine an element.
\end{proposition}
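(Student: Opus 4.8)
The plan is to exploit the direct sum decomposition $A^k(S) = K^k(S) \oplus \poincare_S K^{k+1}(S)$ established in the preceding proposition, and to reduce the statement to two applications of Proposition~\ref{prop:unistep}, one at index $k+1$ and one at index $k$. Write a given $u \in A^k(S)$ uniquely as $u = u_1 + \poincare_S u_2$ with $u_1 \in K^k(S)$ and $u_2 \in K^{k+1}(S)$. Since $\rmd u_1 = 0$, and since the homotopy identity (\ref{eq:poincare1}) applied to the closed form $u_2$ gives $\rmd \poincare_S u_2 = u_2 - \poincare_S \rmd u_2 = u_2$, we have $\rmd u = u_2$. In other words the $K^{k+1}$-component of $u$ is precisely $\rmd u$. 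Now assume that all the listed degrees of freedom of $u$ vanish; I must show $u = 0$.

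First I would show $\rmd u = 0$. For $k = n$ this is automatic, so suppose $k < n$ and apply Proposition~\ref{prop:unistep} at index $k+1$ to the closed form $\rmd u = u_2 \in K^{k+1}(S) \subseteq \rmC^0\poly^1\difo^{k+1}(\refi_k(S))$. Its vertex values vanish because the ``values of the exterior derivative at vertices'' are among the prescribed data. For each $(k+1)$-face $T$ of $S$, Stokes' theorem (legitimate here since the pullbacks of $u$ to faces are single-valued) gives
\begin{equation}
\int_T \rmd u = \sum_{T' \in \partial T} \orient(T,T') \int_{T'} u = 0,
\end{equation}
because each $T'$ is a $k$-face of $S$ and the integrals of $u$ on $k$-faces are prescribed to be $0$. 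Proposition~\ref{prop:unistep} then yields $\rmd u = 0$.

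Once $\rmd u = 0$, directness of the decomposition forces $u_2 = \rmd u = 0$, so $u = u_1 \in K^k(S)$. Thus $u$ is now a closed element of $\rmC^0\poly^1\difo^k(\refi_{k-1}(S))$ whose vertex values vanish and whose integrals on all $k$-faces of $S$ vanish (for $k = 0$ only the vertex values are needed, since $K^0(S)$ consists of constants). A second application of Proposition~\ref{prop:unistep}, this time at index $k$, gives $u = 0$, which is what is required. Since the total count of these degrees of freedom may exceed $\dim A^k(S)$, the conclusion is phrased as overdetermination: the map sending $u$ to this collection of values is injective.

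The two verifications that the hypotheses of Proposition~\ref{prop:unistep} hold are routine. The one step that deserves care, and which I regard as the crux, is the passage from Step~1 to Step~2: it hinges on recognising via the homotopy identity that the $K^{k+1}$-component of $u$ equals $\rmd u$, and on the Stokes relation that converts the prescribed vanishing of the $k$-face integrals of $u$ into the vanishing of the $(k+1)$-face integrals of $\rmd u$. I also need the refinement levels to match up --- $K^{k+1}(S)$ lives on $\refi_k(S)$ while $K^k(S)$ lives on $\refi_{k-1}(S)$ --- so that Proposition~\ref{prop:unistep} applies unchanged at each of the two indices.
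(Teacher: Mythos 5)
Your proposal is correct and follows exactly the paper's argument: the paper's proof applies Proposition~\ref{prop:unistep} first to $\rmd u$ and then to $u$, which is precisely your two-step scheme. Your additional details --- identifying $\rmd u$ with the $K^{k+1}(S)$-component via the homotopy identity, using Stokes' theorem to convert vanishing $k$-face integrals of $u$ into vanishing $(k+1)$-face integrals of $\rmd u$, and checking that the refinements $\refi_k(S)$ and $\refi_{k-1}(S)$ match the hypotheses at each index --- are exactly the routine verifications the paper leaves implicit.
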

\begin{proof} Suppose $u$ is an element and that all these degrees of freedom are $0$. Applying Proposition \ref{prop:unistep} first to $\rmd u$ and then to $u$, one first gets that $\rmd u = 0$ and then that $u=0$.
\end{proof}
For $n = \dim S$ and $k \geq 1$, this gives the upper bounds:
\begin{align}
\dim A^k(S) &\leq (n+1) ({n \choose k} + {n \choose k+1} ) + { n+1 \choose k + 1} =  (n+2){ n+1 \choose k + 1},
\end{align}
and:
\begin{equation}
\dim A^0(S) \leq  (n+1)^2.
\end{equation}
To get unisolvence of the degrees of freedom, we would like to prove the converse bounds on dimension.

We do this for the case of a generalized Powell-Sabin split in dimension $n = 3$. We want to prove:
\begin{align}
\dim A^3(S) & = 5,\\
\dim A^2(S) & = 20,\\
\dim A^1(S) & = 30, \\
\dim A^0(S) & = 16.
\end{align}
This amounts to:
\begin{align}
\dim K^3(S) & = 5,\\
\dim K^2(S) & = 15,\\
\dim K^1(S) & = 15, \\
\dim K^0(S) & = 1.
\end{align}

\begin{proposition}\label{prop:dimcount}
The above dimension counts, in dimension $n= 3$, are correct.
\end{proposition}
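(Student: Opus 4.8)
The plan is to prove the displayed values of $\dim K^k(S)$; the values of $\dim A^k(S)$ then follow immediately, because the sum $A^k(S)=K^k(S)+\poincare_S K^{k+1}(S)$ of (\ref{eq:adefk}) is direct and $\poincare_S$ is injective on $K^{k+1}(S)$: if $w\in K^{k+1}(S)$ has $\rmd w=0$ and $\poincare_S w=0$, then the homotopy identity (\ref{eq:poincare1}) gives $w=\rmd\poincare_S w+\poincare_S\rmd w=0$. Hence $\dim A^k(S)=\dim K^k(S)+\dim K^{k+1}(S)$. I would first record the upper bounds furnished by Proposition \ref{prop:overdeta}, namely $\dim A^0(S)\le 16$, $\dim A^1(S)\le 30$, $\dim A^2(S)\le 20$, $\dim A^3(S)\le 5$. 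Through the relation above these translate into $\dim K^3(S)\le 5$, $\dim K^1(S)\le 15$, and (once $\dim K^3(S)=5$ is known) $\dim K^2(S)\le 15$. Everything then reduces to producing matching lower bounds; the count $\dim K^0(S)=1$ is immediate.

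Two of the three remaining counts are soft. For $K^3(S)$, there are no nonzero $4$-forms on the $3$-dimensional $\bbV$, so the constraint $\rmd u=0$ in (\ref{def:kspace}) is vacuous and $K^3(S)=\rmC^0\poly^1\difo^3(\refi_2(S))$. Identifying a $3$-form with its single scalar proxy, this is the space of continuous, $\refi_2$-piecewise affine scalar functions on the Alfeld split $\refi_2(S)$, which is a triangulation of $S$ with $5$ vertices; its dimension is therefore $5$, matching the upper bound. For $K^2(S)$ I would bound the kernel of $\rmd$ from below by comparing ambient dimensions. The space $\rmC^0\poly^1\difo^2(\refi_1(S))$ has three scalar components, each a continuous $\refi_1$-piecewise affine function on the Worsey--Farin split $\refi_1(S)$, whose vertex set has $9$ elements, so its dimension is $3\times 9=27$; and $\rmd$ carries it into the $\refi_1$-piecewise constant $3$-forms, a space of dimension equal to the number of top simplices of $\refi_1(S)$, namely $12$. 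Thus $\dim K^2(S)=\dim\ker\rmd\ge 27-12=15$, and together with the upper bound $\dim K^2(S)=15$ (so $\rmd$ is in fact onto there).

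The remaining count, $\dim K^1(S)=15$, is the crux. Every element of $K^1(S)$ is a closed $\refi_0$-piecewise affine $1$-form on the contractible $S$, hence equals $\rmd f$ for a potential $f$ unique up to an additive constant; the requirements that $\rmd f$ be $\refi_0$-piecewise affine and continuous force $f$ to be $\refi_0$-piecewise quadratic with continuous gradient, i.e. $f\in\rmC^1\poly^2\difo^0(\refi_0(S))$, and conversely every such $f$ yields $\rmd f\in K^1(S)$. Therefore $f\mapsto\rmd f$ identifies $K^1(S)$ with $\rmC^1\poly^2\difo^0(\refi_0(S))/\bbR$, so that $\dim K^1(S)=\dim\rmC^1\poly^2\difo^0(\refi_0(S))-1$. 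The Worsey--Piper split $\refi_0(S)$ is arranged precisely so that this space of $\rmC^1$ piecewise quadratics is unisolvent for the value and the gradient at the four vertices, of dimension $16$: the upper bound $\le 16$ follows from the uniqueness result proved earlier in \S\ref{sec:toolhigh}, that a function in $\rmC^1\poly^2\difo^0(\refi_0(S))$ with vanishing vertex gradients and equal vertex values is constant, while the matching lower bound is the content of the Worsey--Piper macroelement \cite{WorPip88}. This gives $\dim K^1(S)=15$.

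I expect this last step to be the main obstacle. The soft ambient-dimension argument that settles $K^2(S)$ fails for $K^1(S)$, because the target of $\rmd$ on $1$-forms (the $\refi_0$-piecewise constant $2$-forms, of dimension $3\times 24=72$ on the $24$ top simplices of $\refi_0(S)$) is far larger than the source $\rmC^0\poly^1\difo^1(\refi_0(S))$ (of dimension $3\times 15=45$ from the $15$ vertices of $\refi_0(S)$); thus no useful lower bound comes from dimension comparison alone, and the count genuinely rests on the $\rmC^1$ piecewise-quadratic theory on the Worsey--Piper split. Establishing $\dim\rmC^1\poly^2\difo^0(\refi_0(S))\ge 16$, either by citing \cite{WorPip88} or by exhibiting $16$ independent basis functions through a three-dimensional analogue of the vertex-extension construction of \S\ref{sec:twodhigh}, is the one place requiring real work; the counts for $K^0(S)$, $K^2(S)$, and $K^3(S)$ are essentially combinatorial, resting only on the vertex and simplex counts of the three splits.
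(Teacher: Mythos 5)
Your proposal is correct, and for the reduction $\dim A^k(S)=\dim K^k(S)+\dim K^{k+1}(S)$, the upper bounds via Proposition \ref{prop:overdeta}, and the counts for $K^0$, $K^3$ and $K^2$ (ambient dimension $27$ minus $12$ piecewise-constancy constraints) it matches the paper's argument step for step. The genuine divergence is at the crux, $\dim K^1(S)\geq 15$. The paper stays at the level of $1$-forms and counts constraints directly in the $45$-dimensional space $\rmC^0\poly^1\difo^1(\refi_0(S))$: since $\rmd u$ is constant on each of the $24$ small tetrahedra, it suffices to annihilate its pullback on a set of triangles supplying three faces per small tetrahedron; the triangles joining $W_S$, a face inpoint and a vertex cost $4\times 3$ conditions, and around each of the $6$ edges the Worsey--Piper coplanarity places the relevant triangles in a single plane, so Proposition \ref{prop:alired} reduces four conditions to three, giving $45-12-18=15$. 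You instead integrate, identifying $K^1(S)\cong\rmC^1\poly^2\difo^0(\refi_0(S))/\bbR$, bound this above by $16$ via the unlabeled uniqueness proposition of \S\ref{sec:toolhigh}, and below by $16$ by citing the trivariate macroelement of \cite{WorPip88}. Both routes consume the same geometric hypothesis (edge coplanarity of the split), and your potential-theoretic reading is shorter and conceptually clean; but it imports the hard existence statement from the literature, whereas the paper manufactures the lower bound internally from the elementary two-dimensional alignment computation of Proposition \ref{prop:alired} (itself stated without proof, so neither route is fully self-contained as written). One caution on your fallback: building the $16$ basis functions by a three-dimensional analogue of the vertex extensions of \S\ref{sec:twodhigh} is not routine, since those extensions are cubic while here you need $\rmC^1$ piecewise quadratics --- exactly where the Worsey--Piper geometry is indispensable --- so avoiding the citation makes the flagged ``real work'' reappear undiminished.
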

\begin{proof}
\tpoint For  $K^3(S)$ and $K^0(S)$ it is clear.

\tpoint For $K^2(S)$  we can get the lower bound as follows: The tetrahedron and its faces are each equipped with an inpoint. So $\rmC^0\poly^1 \difo^2(\refi_1(S))$ has dimension $(1+4+4)\cdot 3 = 27$. On the other hand there are 12 subtetrahedra on which we enforce one condition. So $\dim K^2(S) \geq 27 -12 = 15$.

\tpoint For $K^1(S)$ all faces and edges are refined, so $\rmC^0\poly^1 \difo^1(\refi_0(S))$ has dimension 45. To enforce on an element $u$ of $\rmC^0\poly^1 \difo^1(\refi_0(S))$, that  $\rmd u = 0$, we use that $\rmd u $ is constant on each of the 24 small tetrahedra of $\refi_0(S)$. Therefore it is enough to enforce the pullback to be zero on a set of triangular faces in $\refi_0(S)$, such that each litte tetrahedron has three of them in its boundary. We choose these triangles as follows:

-- We impose that the pullback of $\rmd u $ to the triangles joining the inpoint of the tetrahedron, the inpoint of a face and a vertex should be zero. These are 3 conditions per face, and there are 4 faces.

-- For each edge, the inpoints of the tetrahedron, the two adjacent triangular faces, and the edge itself are coplanar, by the choice of split. So we may use Proposition \ref{prop:alired} to impose only 3 conditions, rather than 4. There are 6 edges.

This gives $\dim K^1(S) \geq 45 - 4 \cdot 3 - 6 \cdot 3 = 15$.
\end{proof}

In arbitrary dimension $n$ we can still be precise about the last two spaces in the complex, which are those relevant for Stokes. 

\begin{proposition}\label{prop:nonegood} The given degrees of freedom on $A^{n-1}(S)$ and $A^n(S)$ are unisolvent. The dimensions are $\dim A^{n-1}(S) = (n+1)(n+2)$ and $\dim A^{n}(S) = n+2$.

The associated interpolator commutes with the divergence operator.

This gives a minimal good element for continous vectorfields with continuous divergence. 
\end{proposition}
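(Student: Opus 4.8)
The plan is to treat the two spaces separately, using that the degrees of freedom are already separating by Propositions \ref{prop:overdeta} and \ref{prop:unistep}, so that only a matching dimension count (equivalently, surjectivity of the degrees of freedom) remains. I would first dispose of $A^n(S)$. Since there are no nonzero $(n+1)$-forms, $\poincare_S K^{n+1}(S)=0$ and $A^n(S)=K^n(S)$; and as every top form is closed, $K^n(S)=\rmC^0\poly^1\difo^n(\refi_{n-1}(S))$. The refinement $\refi_{n-1}(S)$ is the Alfeld split, whose vertex set is the $n+1$ vertices of $S$ together with the inpoint $W_S$, i.e. $n+2$ vertices in all. A continuous piecewise-affine $n$-form is freely determined by its value at each such vertex, so $\dim A^n(S)=n+2$. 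The prescribed degrees of freedom on $A^n(S)$ are the $n+1$ vertex values and $\int_S$, again $n+2$ in number, and being separating they are unisolvent.

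For $A^{n-1}(S)$ I would first count degrees of freedom: $n(n+1)$ vertex values (each in $\alter^{n-1}(\bbV)$, of dimension $n$), $n+1$ values of the exterior derivative at the vertices (each in the one-dimensional $\alter^n(\bbV)$), and $n+1$ integrals over the codimension-one faces, for a total of $(n+1)(n+2)$. Proposition \ref{prop:overdeta} then gives the upper bound $\dim A^{n-1}(S)\leq (n+1)(n+2)$. To get equality I would invoke the directness of the sum (\ref{eq:adefk}) established at the start of this section, writing $A^{n-1}(S)=K^{n-1}(S)\oplus \poincare_S K^n(S)$, and note that $\poincare_S$ is injective on $K^n(S)$: if $u$ is a closed $n$-form then $u=\rmd\poincare_S u$ by (\ref{eq:poincare1}), so $\poincare_S u=0$ forces $u=0$. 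Hence $\dim\poincare_S K^n(S)=n+2$ and the claim reduces to $\dim K^{n-1}(S)=n(n+2)$.

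On $K^{n-1}(S)$ the derivative degrees of freedom vanish identically, while Stokes' theorem imposes the single relation $\sum_F \orient(S,F)\int_F u=\int_S\rmd u=0$ among the face integrals. Proposition \ref{prop:unistep}, applied with $k=n-1$, says the vertex values and face integrals separate points on $K^{n-1}(S)$, so the degree-of-freedom map lands in the hyperplane cut out by that relation, giving $\dim K^{n-1}(S)\leq n(n+1)+\big((n+1)-1\big)=n(n+2)$. The remaining, and principal, step is the reverse inequality, i.e. the surjectivity statement that every admissible datum---arbitrary vertex values in $\alter^{n-1}(\bbV)$ together with arbitrary face integrals subject to the single orientation-sum relation---is realized by some closed piecewise-affine form in $K^{n-1}(S)$. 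I expect this to be the hard part. I would build the elements in two stages: first match the vertex values by explicit closed $\poly^1$ forms on $\refi_{n-2}(S)$ localised at one vertex (adjusting by closed forms so as not to disturb the other vertices), and then correct the $n$ independent face integrals by interior and face ``bubble'' forms that vanish at all vertices. The delicate point is to verify simultaneously that these forms are continuous across the interfaces of the split, that their exterior derivative vanishes, and that the face-integral correction is compatible with the Stokes relation; a clean way to organise it is an induction on $\dim S$ with extension from $\partial S$ in the spirit of Proposition \ref{prop:extrec}, drawing on the tools of Section \ref{sec:toolhigh}.

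Finally, the commuting property and minimality are read off. The interpolator commutes with the exterior derivative (the divergence in vector-proxy form) because the degrees of freedom of $A^n(S)$ are exactly the $\rmd$-images of degrees of freedom of $A^{n-1}(S)$: the vertex values of $\rmd u$ are among the chosen $A^{n-1}$-degrees of freedom, and $\int_S\rmd u=\sum_F\orient(S,F)\int_F u$ is a fixed combination of the face integrals, so $\eval\circ\rmd$ on $A^{n-1}(S)$ factors through the $A^{n-1}$-degrees of freedom precisely as required. Minimality is the observation that, the restriction being the double trace, compatibility forces $A^k(V)=\alter^k(\bbV)\oplus\alter^{k+1}(\bbV)$ at each vertex and $\dim A^k_0(T)\geq 1$ on each $k$-cell; the dimensions just computed attain the resulting lower bound from Proposition \ref{prop:extdim}, so the system is a minimal good element for continuous vector fields with continuous divergence.
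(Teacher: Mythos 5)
Your treatment of $A^n(S)$ is complete and agrees with the paper, and your architecture for $A^{n-1}(S)$ is also exactly the paper's: upper bound $(n+1)(n+2)$ from the overdetermining degrees of freedom (Proposition \ref{prop:overdeta}), directness of the sum (\ref{eq:adefk}), injectivity of $\poincare_S$ on $K^n(S)$ via (\ref{eq:poincare1}), so that everything reduces to the lower bound $\dim K^{n-1}(S) \geq n(n+2)$. But at precisely this step, which you yourself identify as ``the principal step'', you stop: you sketch a two-stage construction (localized closed vertex forms, then bubble corrections for the face integrals), flag the continuity and closedness verifications as delicate, and defer them to an unexecuted induction in the spirit of Proposition \ref{prop:extrec}. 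As written, the reverse inequality is not proved, so neither unisolvence nor $\dim A^{n-1}(S) = (n+1)(n+2)$ is established. This is a genuine gap, and the route you propose is moreover harder than necessary: realizing arbitrary admissible boundary data by closed forms on the split is essentially the extension (flabbiness) property, which the paper only obtains afterwards, and only for $n=3$ (Theorem \ref{theo:3dstokes}); carrying out your construction in arbitrary dimension would be a substantial piece of work on its own.

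The paper closes this step with no construction at all, by counting constraints. The refinement $\refi_{n-2}(S)$ has $2n+3$ vertices (the $n+1$ vertices of $S$, the $n+1$ inpoints of the $(n-1)$-faces, and $W_S$), and a continuous piecewise-affine $(n-1)$-form is freely determined by its vertex values in $\alter^{n-1}(\bbV) \cong \bbR^n$, so $\dim \rmC^0\poly^1\difo^{n-1}(\refi_{n-2}(S)) = n(2(n+1)+1)$. On each of the $(n+1)n$ small $n$-simplices of the split, $\rmd u$ is a constant $n$-form, i.e.\ a single scalar, so $\rmd u = 0$ imposes at most $(n+1)n$ linear conditions, whence
\begin{equation*}
\dim K^{n-1}(S) \ \geq\ n\bigl(2(n+1)+1\bigr) - (n+1)n \ =\ n(n+2),
\end{equation*}
with no need to verify independence of the constraints: the squeeze against the degree-of-freedom upper bound does that automatically (this also makes your separate upper bound on $K^{n-1}(S)$ via Proposition \ref{prop:unistep} and the Stokes relation redundant, though it is correct). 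If you replace your construction sketch by this count, your argument closes and coincides with the paper's proof; your remarks on commutation (the degrees of freedom of $A^n(S)$ factor through those of $A^{n-1}(S)$ via Stokes) and on minimality (the forced vertex spaces $\alter^{n-1}(\bbV)\oplus\alter^n(\bbV)$ and $\dim A^k_0(T)\geq 1$, attained by the computed dimensions) are sound and are indeed left implicit in the paper.
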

\begin{proof}
\tpoint We have $\dim A^n(S) = n+2$, since there are $n+1$ vertices in $S$ and we have added the inpoint of $S$.

\tpoint For $K^{n-1}(S)$ we may estimate its dimension as follows. In the refinement $\refi_{n-2} (S)$ there are the $n+1$ vertices of $S$, the $n+1$ inpoints attached to $(n-1)$-faces, and one inpoint in $S$. This gives:
\begin{equation}
\dim  \rmC^0 \poly^1 \difo^{n-1}(\refi_{n-2} (S)) = n( 2(n+1) +1), 
\end{equation}
There are also $(n+1)n$ small $n$-simplexes, on which we express $\rmd u= 0$ as one scalar constraint. This gives:
\begin{equation}
\dim K^{n-1}(S) \geq n( 2(n+1) +1) - (n+1)n = n^2 + 2n.
\end{equation}

\tpoint We conclude:
\begin{equation}
\dim A^{n-1}(S) \geq n^2 + 2n + n+2 = (n+1)(n+2).
 \end{equation}

\tpoint Since these lower bounds coincide with the number of degrees of freedom, and these are overdetermining, the degrees of freedom are unisolvent, and the dimension count follows.
\end{proof}
For general $n$, the analysis of the complex at lower indices seems more complicated, say for the space $A^1(S)$.

\paragraph{Behavior on faces.}
We are now interested in determining the restrictions to the faces of $S$, of the spaces $A^k(S)$. This is important for the inter-element continuity of fields, to get global fields of the required regularity. It is also inherent to the framework of finite element system, which encodes the inter-element continuity by taking an inverse limit. 

For this purpose, some alternative characterisations of $A^k(S)$ are sometimes useful. We let $\koszul_S$ denote the Koszul operator associated with the inpoint of $S$. We have:
\begin{equation}
\poincare_{S} K^{k+1}(S) = \koszul_{S} K^{k+1}(S).
\end{equation}
It follows that:
\begin{equation}
A^k(S) = K^k(S) + \koszul_{S} K^{k+1}(S).
\end{equation}
We also have the alternative characterization:
\begin{proposition}
We have:
\begin{align}\label{eq:altdefa}
A^k(S) = \{ &  u \in \rmC^0\poly^2\difo^k(\refi_{k-1} (S)) \ : \ \rmd u \in \rmC^0 \poly^1 \difo^{k+1}(\refi_k (S)) \textrm{ and }\\
&  u - \koszul_S \rmd u \in  \rmC^0\poly^1\difo^{k}(\refi_{k-1} (S)) \},
\end{align}
\end{proposition}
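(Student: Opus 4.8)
The plan is to prove the two inclusions separately, exploiting the decomposition recorded above, $A^k(S) = K^k(S) \oplus \poincare_S K^{k+1}(S)$, together with two identities coming from the homotopy formula (\ref{eq:poincare1}). Writing a typical element of $A^k(S)$ as $u = u_0 + \poincare_S u_1$ with $u_0 \in K^k(S)$ and $u_1 \in K^{k+1}(S)$, the relation $\rmd u_1 = 0$ and the formula $\id = \poincare_S \rmd + \rmd \poincare_S$ on $(k+1)$-forms give $\rmd u = \rmd \poincare_S u_1 = u_1 - \poincare_S \rmd u_1 = u_1$, and hence $u - \poincare_S \rmd u = u_0$. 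These two identities are the engine of the proof: the first expresses $\rmd u$ as the closed generator $u_1$, and the second recovers the closed generator $u_0$ as $u - \poincare_S \rmd u$.

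For the forward inclusion I would verify the three membership requirements for $u \in A^k(S)$. The condition on $\rmd u$ is immediate from $\rmd u = u_1 \in \rmC^0 \poly^1 \difo^{k+1}(\refi_k(S))$, and the last condition follows at once from $u - \poincare_S \rmd u = u_0 \in K^k(S) \subseteq \rmC^0 \poly^1 \difo^k(\refi_{k-1}(S))$. The only point requiring care is that $u$ itself lies in $\rmC^0 \poly^2 \difo^k(\refi_{k-1}(S))$. Here I would use the alternative description $A^k(S) = K^k(S) + \koszul_S K^{k+1}(S)$ and write $u = \tilde u_0 + \koszul_S \tilde u_1$; since contraction by the affine field $X_S$ is purely algebraic, $\koszul_S \tilde u_1$ is continuous, piecewise polynomial of degree at most two, and carried by the same refinement $\refi_k(S)$ as $\tilde u_1$. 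As $\refi_{k-1}(S)$ refines $\refi_k(S)$, this places $u$ in $\rmC^0 \poly^2 \difo^k(\refi_{k-1}(S))$.

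For the reverse inclusion, suppose $u$ belongs to the set on the right: $u \in \rmC^0 \poly^2 \difo^k(\refi_{k-1}(S))$ with $\rmd u \in \rmC^0 \poly^1 \difo^{k+1}(\refi_k(S))$ and $u - \poincare_S \rmd u \in \rmC^0 \poly^1 \difo^k(\refi_{k-1}(S))$. I would set $u_1 := \rmd u$ and $u_0 := u - \poincare_S \rmd u$. Then $\rmd u_1 = \rmd \rmd u = 0$, so $u_1 \in K^{k+1}(S)$. A second use of the homotopy formula gives $\rmd u_0 = \rmd u - \rmd \poincare_S u_1 = u_1 - (u_1 - \poincare_S \rmd u_1) = 0$, so $u_0 \in K^k(S)$. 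Therefore $u = u_0 + \poincare_S u_1 \in K^k(S) + \poincare_S K^{k+1}(S) = A^k(S)$.

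The bookkeeping of polynomial degrees and of the refinement relation between $\refi_{k-1}(S)$ and $\refi_k(S)$ is routine. The one genuine obstacle is the regularity claim for $\poincare_S u_1$ in the forward inclusion, that is, simultaneously controlling the piecewise-polynomial degree and the inter-element continuity on the correct refinement; the homotopy formula alone, being an integral along rays from $W_S$, does not make the refinement transparent. The clean resolution is exactly the passage to the Koszul operator, whose algebraic nature makes the refinement $\refi_k(S)$, and hence its refinement $\refi_{k-1}(S)$, manifest; the identity $\poincare_S K^{k+1}(S) = \koszul_S K^{k+1}(S)$ recorded just before the proposition is what licenses this substitution.
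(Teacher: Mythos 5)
Your overall skeleton is exactly the paper's: the published proof of this proposition is the single line ``Using (\ref{eq:poincare1})'', and your decomposition $u = u_0 + \poincare_S u_1$ with the two identities $\rmd u = u_1$ and $u - \poincare_S \rmd u = u_0$, plus the observation that $\refi_{k-1}(S)$ refines $\refi_k(S)$, is precisely the expansion of that line.

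There is, however, one genuine gap measured against the statement as printed: every condition you actually verify concerns $u - \poincare_S \rmd u$, whereas the displayed set in (\ref{eq:altdefa}) is cut out by $u - \koszul_S \rmd u \in \rmC^0\poly^1\difo^k(\refi_{k-1}(S))$, and in your reverse inclusion you even restate the hypothesis with $\poincare_S$ in place of $\koszul_S$. The set identity $\poincare_S K^{k+1}(S) = \koszul_S K^{k+1}(S)$, which you invoke as the ``licence'' for this substitution, equates images of the two operators but not the operators themselves, so it cannot be used inside the membership condition: on the component of $\rmd u$ whose coefficients are homogeneous of degree $r$ about $W_S$ one has $\poincare_S = \frac{1}{k+1+r}\koszul_S$, hence cellwise $\koszul_S \rmd u - \poincare_S \rmd u = \frac{k}{k+1}\koszul_S a + \frac{k+1}{k+2}\koszul_S b$, where $\rmd u = a + b$ is the splitting into homogeneous parts of degrees $0$ and $1$; the term $\koszul_S b$ is genuinely quadratic. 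Concretely, in dimension $2$ with $k=1$ and coordinates centered at $W_S$, the element $u = \poincare_S(x\,\rmd x \wedge \rmd y) = \frac{1}{3}(x^2\,\rmd y - xy\,\rmd x)$ of $A^1(S)$ satisfies $u - \koszul_S \rmd u = -2u$, which is not piecewise affine. So your forward inclusion, read against the literal $\koszul_S$ formulation, fails at the third condition --- but so does the literal formulation itself; the paper's one-line proof via (\ref{eq:poincare1}) only supports the version with $\poincare_S$, which is the version you proved. The clean fix is to state and prove the characterization with $\poincare_S \rmd u$ (as you in effect did), or, if one insists on $\koszul_S$, to first record that the homogeneous components of elements of $K^{k+1}(S)$ lie separately in $K^{k+1}(S)$ (all internal interfaces of the cone refinement contain $W_S$) and then adjust the condition accordingly --- but in no case may one pass between $\koszul_S$ and $\poincare_S$ silently inside the defining condition.
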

\begin{proof}
Using (\ref{eq:poincare1}).
\end{proof}

\begin{proposition}\label{prop:kcaffine}
 For any element $u$ of $K^k(S)$, if $T$ is face of $S$, then for any face $U$ of $S$ with $T \subcell U \subcell S$, the pullback of $u\ctr (W_S - W_U)$ to $T$ is affine.
\end{proposition}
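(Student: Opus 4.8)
The plan is to reduce the claim to the scholium of Remark~\ref{rem:scholie} and thence to a statement about continuity of a directional derivative. Set $Y = W_S - W_U$ (a constant vector) and $v = u \ctr Y$, which is a $(k-1)$-form. If $\dim T \leq k-1$ then $T$ is not refined by $\refi_{k-1}$ (and for $\dim T < k-1$ the pullback even vanishes for dimension reasons), so $\pull_T v$ is trivially affine; and if $U = S$ then $Y = 0$. Hence I may assume $\dim T \geq k$ and $U \subcellstrict S$. Since $u$ is continuous and $\refi_{k-1}(S)$-piecewise affine and $Y$ is constant, $v$ is continuous and $\refi_{k-1}(S)$-piecewise affine, so $\pull_T v \in \rmC^0\poly^1\difo^{k-1}(\refi_{k-1}(T))$, using that $\refi_{k-1}(S)$ restricts to $\refi_{k-1}(T)$ on $T$. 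By Remark~\ref{rem:scholie}, applied with $k-1$ in place of $k$, it therefore suffices to show that $\rmd \pull_T v = \pull_T \rmd v$ is constant on $T$.

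First I would compute $\rmd v$. Cartan's formula gives $\rmd(u \ctr Y) = \lie_Y u - (\rmd u)\ctr Y = \lie_Y u$, since $\rmd u = 0$. As $u$ is piecewise affine, the componentwise directional derivative $\lie_Y u$ along the constant field $Y$ is piecewise constant, so $\pull_T \rmd v = \pull_T \lie_Y u$ is piecewise constant on $\refi_{k-1}(T)$ and is constant precisely when it is continuous. The core of the proof is thus to show that $\pull_T \lie_Y u$ has no jumps across the interfaces of $\refi_{k-1}(T)$. The local mechanism is the one already exploited in the proof of Proposition~\ref{prop:duconst} (case $m=l$): across a flat interface contained in a hyperplane $\hat H$ of the affine hull of $S$ with normal $\hat n$, continuity of $u$ forces the gradient jump to be conormal, so the normal-derivative jump $\hat\eta = \jump{\partial_{\hat n} u}$ yields $\jump{\lie_Y u} = (Y\cdot \hat n)\,\hat\eta$; moreover $\rmd u = 0$ gives $\hat n^\flat \wedge \hat\eta = 0$, whence $\hat\eta = \hat n^\flat \wedge \zeta$ for some $\zeta$.

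The geometric heart is then to identify, for each interface $F$ of $\refi_{k-1}(T)$, the wall $\hat H$ governing the jump of $\pull_T \lie_Y u$ across $F$, and to check that $Y$ is tangent to it. I would argue that the two cells of $\refi_{k-1}(T)$ meeting along $F$ lift to cells of $\refi_{k-1}(S)$ sharing a common chain of inpoints running from $T$ up through $U$ to $S$; these two lifts differ in a single vertex and so are adjacent across one wall $\hat H$, which contains every inpoint of that chain, in particular both $W_S$ and $W_U$. Then $Y = W_S - W_U$ is tangent to $\hat H$, i.e. $Y\cdot\hat n = 0$, so $\jump{\lie_Y u}=0$. A companion point, needed to make the one-sided values well defined, is that any wall separating two lifts of the \emph{same} cell of $\refi_{k-1}(T)$ contains the affine hull of $T$, so $\hat n$ is normal to $T$; then $\pull_T \hat n^\flat = 0$ and, using $\hat\eta = \hat n^\flat\wedge\zeta$, the pullback $\pull_T\hat\eta$ of the jump vanishes. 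Granting these two tangency facts, $\pull_T \lie_Y u$ is continuous, hence constant, and $\pull_T v$ is affine.

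I expect the main obstacle to be precisely this bookkeeping of $\refi_{k-1}$: verifying that the wall controlling the jump along $T$ always contains the full inpoint-chain through $U$ (so that $W_S$ and $W_U$ lie in a common interface), and organizing the pullback so that the transverse walls are cleanly discarded. A safer, more mechanical alternative that avoids choosing lifts is to telescope $W_S - W_U = \sum_i (W_{U_i} - W_{U_{i+1}})$ along a maximal flag $U = U_m \subcellstrict \cdots \subcellstrict U_0 = S$ of faces all containing $T$, reduce by linearity to the facet case where $U_{i+1}$ is a facet of $U_i$, prove that case by reusing almost verbatim the continuity computation of Proposition~\ref{prop:duconst}, and then sum the resulting affine pullbacks.
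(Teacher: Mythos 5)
Your proof is correct, and it rests on the same pillars as the paper's argument --- Cartan's formula together with $\rmd u = 0$, the conormal structure of gradient jumps across walls of $\refi_{k-1}(S)$, and Remark~\ref{rem:scholie} --- but it is organized quite differently, and the difference is instructive. The paper first reduces to showing that the pullback to $U$ is affine (the further pullback of an affine form to $T$ is then affine), and then pulls $u \ctr (W_S - W_U)$ back to the cone $[W_S, U]$, which is a union of cells of $\refi_{k-1}(S)$: since every top-dimensional cell of $\refi_{k-1}(U)$ contains $W_U$ (its chain ends at $U$), every internal wall of the cone contains both $W_S$ and $W_U$, so the tangency of $Y = W_S - W_U$ to all relevant walls --- exactly the fact you establish by constructing adjacent lifts sharing a common inpoint chain through $U$ --- comes for free, with no flags, no lifts, and no well-definedness issue, the cone supplying a canonical one-sided neighbourhood of $U$; the degenerate case $\dim U \leq k-1$ is trivial because then nothing is refined. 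Your anticipated ``main obstacle,'' the $\refi_{k-1}$ bookkeeping, is thus precisely what the cone device eliminates. That said, your hands-on version does go through: the common extension flag exists because every top cell of $\refi_{k-1}(T)$ carries the full flag of faces of $T$ of dimensions $k, \ldots, \dim T$ (so both cells contain $W_T$, the differing vertex lies in neither chain's top, and one may append any flag from $T$ through $U$ to $S$), and your ``companion point'' is in fact automatic, since $\rmd\, \pull_T v$ is intrinsically defined on $T$, so the value $\pull_T(\lie_Y u)$ computed from any ambient $n$-cell must agree. A small bonus of your route is that you get the degree bookkeeping right: $u \ctr Y$ is a $(k-1)$-form, piecewise affine with respect to $\refi_{k-1}$, which is the correct index for Remark~\ref{rem:scholie}, whereas the paper's final line has a typo ($\difo^k(\refi_{k}(U))$ where $\difo^{k-1}(\refi_{k-1}(U))$ is meant). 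Your telescoping fallback is also sound, but it reduces rather than avoids the lift analysis; the cone is the cleaner shortcut.
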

\begin{proof}
It suffices to show that the pullback of $u\ctr (W_S - W_U)$ to $U$ is affine. Let $v$ be the pullback of $u\ctr (W_S - W_U)$ to the simplex $[W_S, U]$. We have that $\rmd v$ is piecewise constant and continuous, as the Lie derivative of $u$ along $W_S - W_U$. Hence $\rmd v$ is contant. We have $\pull_U v \in \rmC^0 \poly^1 \difo^k(\refi_{k} (U)) $  and may apply Remark \ref{rem:scholie}.
\end{proof}

\begin{proposition} For any element $u$ of $A^{k}(S)$, if $T$ is face of $S$, then for any face $U$ of $S$ with $T \subcell U \subcell S$, the pullback of $(u - \koszul_T \rmd u)\ctr (W_S - W_U)$ to $T$ is affine.
\end{proposition}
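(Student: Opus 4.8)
The plan is to follow the template of Proposition~\ref{prop:kcaffine}, treating the non-closed part of $u$ by means of the correction term $\koszul_T \rmd u$. Write $Y = W_S - W_U$ and $\hat u = u - \koszul_T \rmd u$. The first observation is that, by the characterization \eqref{eq:altdefa}, $\rmd u$ is piecewise affine with respect to $\refi_k(S)$, continuous and closed, so that $\rmd u \in K^{k+1}(S)$; hence Proposition~\ref{prop:kcaffine} applies to $\rmd u$ just as it does to elements of $K^k(S)$. Using that contractions anticommute, I would record the identity $\hat u \ctr Y = u \ctr Y + \koszul_T\bigl((\rmd u)\ctr Y\bigr)$, in which the second summand is governed by Proposition~\ref{prop:kcaffine} applied to $\rmd u$, and in which $\koszul_T$ commutes with $\pull_T$ because the field $X_{W_T}\colon x \mapsto x - W_T$ is tangent to $T$ (as $W_T \in T$).

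As in Proposition~\ref{prop:kcaffine}, I would then reduce to the cone $[W_S,U]$: since $T \subcell U \subcell [W_S,U]$ and pullback is functorial, it suffices to prove that $v := \pull_{[W_S,U]}(\hat u \ctr Y)$ is affine on $[W_S,U]$, and afterwards restrict to $U$ and to $T$. To conclude that $v$ is affine I would invoke Remark~\ref{rem:scholie}, i.e. Proposition~\ref{prop:duconst}: it is enough to show that $v$ lies in $\rmC^0\poly^1\difo^{k-1}(\refi_{k-1}([W_S,U]))$ and that $\rmd v$ is constant. For the differential, compute $\rmd(\hat u \ctr Y) = \lie_Y \hat u - (\rmd \hat u)\ctr Y$ with $\rmd \hat u = (\id - \lie_{X_{W_T}})\rmd u$; the role of the correction $\koszul_T \rmd u$ is precisely to turn this combination into a directional derivative along the radial cone direction $W_S - W_U$, which, being the derivative of a continuous piecewise polynomial along a cone ray, is continuous and piecewise constant, hence constant, exactly as in the proof of Proposition~\ref{prop:kcaffine}.

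The main obstacle is the regularity statement $v \in \rmC^0 \poly^1$, namely that pulling back to the cone annihilates the genuinely quadratic part of $\hat u \ctr Y$ (the degree-two contribution coming from $\koszul_S$ applied to the linear part of the $K^{k+1}$-component of $u$, together with the matching term inside $\koszul_T \rmd u$). The essential point here is geometric: on $[W_S,U]$ the field $X_{W_S}\colon x\mapsto x-W_S$ is tangent to the cone and $Y = W_S - W_U$ points along the ray through $W_U$, so that when the relevant $(k-1)$-form is evaluated on tangent vectors to the cone one obtains determinant-type expressions in which the radial direction occurs twice; the top-degree dependence then drops out, leaving an affine form. Carrying out this cancellation carefully, so that $v$ has the degree and continuity needed for Remark~\ref{rem:scholie}, is the crux of the argument, whereas the remaining bookkeeping (the $w$-part handled by Proposition~\ref{prop:kcaffine} and the constancy of $\rmd v$) is routine.
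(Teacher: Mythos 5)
You correctly observe that $\rmd u \in K^{k+1}(S)$, and your contraction identity $\hat u \ctr Y = u \ctr Y + \koszul_T\bigl((\rmd u)\ctr Y\bigr)$ is true, but this decomposition does not isolate affine pieces: Proposition \ref{prop:kcaffine} makes $\pull_T\bigl((\rmd u)\ctr Y\bigr)$ affine, so applying $\koszul_T$ to it produces a \emph{quadratic} form, and you have no control whatsoever on $\pull_T(u \ctr Y)$, since Proposition \ref{prop:kcaffine} does not apply to $u$ itself ($u$ is neither closed nor piecewise affine). Everything therefore rests on the cancellation of the quadratic contributions, which you rightly flag as the crux but do not prove: the heuristic about ``determinant-type expressions in which the radial direction occurs twice'' on the cone is not an argument, and it also mislocates the mechanism. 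The cancellation is not a feature of pulling back to $[W_S,U]$ at all; it is the elementary operator identity $\koszul_S - \koszul_T = \ctr (W_T - W_S)$ (the field $X_{W_S} - X_{W_T}$ is the constant vector $W_T - W_S$), which combined with the membership $u - \koszul_S \rmd u \in K^k(S)$ coming from the characterization (\ref{eq:altdefa}) gives
\begin{equation*}
u - \koszul_T \rmd u \;=\; (u - \koszul_S \rmd u) \;+\; \rmd u \ctr (W_T - W_S),
\end{equation*}
a sum of a closed piecewise affine form and a constant-vector contraction of one. In particular $\hat u$ has no quadratic part anywhere on $S$, so your ``main obstacle'' dissolves in one line, and no cancellation-upon-pullback needs to be engineered.

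This identity is exactly how the paper argues, in two telescoping steps, each concluded by an application of the already-proven Proposition \ref{prop:kcaffine}: first, $(u - \koszul_U \rmd u)\ctr Y = (u - \koszul_S \rmd u)\ctr Y$ because the discrepancy is $\pm\,(\rmd u \ctr Y)\ctr Y = 0$, and $u - \koszul_S \rmd u \in K^k(S)$ handles this term; second, $u - \koszul_T \rmd u = (u - \koszul_U \rmd u) + \rmd u \ctr (W_T - W_U)$, and the extra term, rewritten as $\pm\,(\rmd u \ctr Y)\ctr(W_T - W_U)$, is handled by Proposition \ref{prop:kcaffine} applied to $\rmd u \in K^{k+1}(S)$, the vector $W_T - W_U$ being tangent to $U$. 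No new cone analysis is needed. Your proposed re-run of the cone argument has, moreover, a second defect: Remark \ref{rem:scholie} (i.e.\ Proposition \ref{prop:duconst}) presupposes a simplex equipped with the inpoint refinements $\refi_m$, and the cone $[W_S, U]$ is not a simplex of the complex carrying such a structure --- the subdivision it inherits from $\refi_{k-1}(S)$ is not of the form $\refi_{k-1}([W_S,U])$. Indeed, in the proof of Proposition \ref{prop:kcaffine} the cone is used only to establish constancy of the differential, and Remark \ref{rem:scholie} is then applied on $U$, not on the cone. So: correct preliminary observations, but the decisive step is missing, and the route you sketch for it would not deliver.
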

\begin{proof}
\tpoint We put $v = \rmd u$. We first examine the pullback of $(u - \koszul_U v)\ctr (W_S - W_U)$ to $U$. We have:
\begin{align}
(u - \koszul_U v)\ctr (W_S - W_U) & = (u - \koszul_S v  + \koszul_S v - \koszul_U v)\ctr (W_S - W_U), \nonumber\\
& = (u - \koszul_S v)\ctr (W_S - W_U) - \nonumber\\
& \phantom{=} \quad  (v \ctr (W_S - W_U))\ctr (W_S - W_U).
\end{align}
The term on the last line is $0$. Since $u - \koszul_S v \in K^k(S)$ we may apply the preceding proposition to it. We deduce that the pullback of $(u - \koszul_U v)\ctr (W_S - W_U)$ to $U$ is affine.

\tpoint Now on $T$ we write:
\begin{equation}
u - \koszul_T v = u- \koszul_U v + v \ctr(W_U - W_T).
\end{equation}
In the right hand side, we remark that $(u- \koszul_U v) \ctr (W_S - W_U)$ has a pullback to $T$ which is affine, by the preceding point. Then we consider $w = v \ctr(W_U - W_T) \ctr (W_S - W_U)$. From the preceding proposition $v \ctr (W_S - W_U)$ is affine when pulled back on $U$. Hence $w$ pulled back to $T$ is also affine.
\end{proof}

On lower dimensional subcells $T$ of $S$ we can define first:
\begin{equation}
M^k(T) = K^k(T) + \koszul_{T} K^{k+1}(T).
\end{equation}
We have:
\begin{proposition}
For any simplexes $T \subcell U$ in $\subcells(S)$, the pullback operator gives a map $\pull_T : M^k(U) \to M^k(T)$.
\end{proposition}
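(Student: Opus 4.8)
The plan is to split a general element of $M^k(U) = K^k(U) + \koszul_U K^{k+1}(U)$ as $u = u_0 + \koszul_U u_1$ with $u_0 \in K^k(U)$ and $u_1 \in K^{k+1}(U)$, and to carry each piece into $M^k(T)$ separately. The engine for the closed spaces is that pullback commutes with the exterior derivative together with compatibility of refinements: by the lemma on refinements of subsimplices, $\refi_m(T) = \{T' \in \refi_m(U) : |T'| \subseteq |T|\}$, so the pullback to $T$ of a continuous form that is piecewise affine with respect to $\refi_m(U)$ is again continuous and piecewise affine with respect to $\refi_m(T)$. Since $\rmd \pull_T = \pull_T \rmd$, this yields $\pull_T K^k(U) \subseteq K^k(T)$ and $\pull_T K^{k+1}(U) \subseteq K^{k+1}(T)$. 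In particular $\pull_T u_0 \in K^k(T) \subseteq M^k(T)$ already.

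Next I would treat the Koszul term by comparing $\koszul_U$ with $\koszul_T$. The inpoint $W_T$ lies in the affine hull of $T$, so $X_T : x \mapsto x - W_T$ is tangent to $T$, and contraction by a field tangent to $T$ commutes with $\pull_T$. Writing $\koszul_U u_1 = \koszul_T u_1 + u_1 \ctr (W_T - W_U)$, the difference of the two contraction fields being the constant vector $W_T - W_U$, I obtain
\begin{equation}
\pull_T \koszul_U u_1 = \koszul_T \pull_T u_1 + \pull_T\bigl(u_1 \ctr (W_T - W_U)\bigr).
\end{equation}
The first summand lies in $\koszul_T K^{k+1}(T) \subseteq M^k(T)$, using $\pull_T u_1 \in K^{k+1}(T)$. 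For the second summand I would invoke Proposition \ref{prop:kcaffine}, applied with ambient simplex $U$, the form $u_1 \in K^{k+1}(U)$, and the intermediate face taken to be $T$ itself (so the relevant chain is $T \subcell T \subcell U$): it shows that the pullback to $T$ of $u_1 \ctr (W_U - W_T)$ is affine, hence so is $\pull_T\bigl(u_1 \ctr (W_T - W_U)\bigr)$.

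It then remains to check that every affine $k$-form $w$ on $T$ lies in $M^k(T)$. For $k \geq 1$ I would use the Poincar\'e identity $w = \rmd \poincare_T w + \poincare_T \rmd w$ on $T$, together with the fact that $\poincare_T$ is a degree-dependent multiple of $\koszul_T$ on homogeneous pieces. Since $\rmd w$ is constant it belongs to $K^{k+1}(T)$ and $\poincare_T \rmd w = (k+1)^{-1}\koszul_T \rmd w \in \koszul_T K^{k+1}(T)$; meanwhile $\poincare_T w$ has polynomial degree at most two, so $\rmd \poincare_T w$ is closed and affine, hence in $K^k(T)$. Thus $w \in M^k(T)$. (The case $k=0$ is immediate: an affine function equals its constant value at $W_T$ plus $\koszul_T$ of its constant differential.) Combining the three contributions gives $\pull_T u \in M^k(T)$.

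The step I expect to be the crux is the transverse contraction term $\pull_T\bigl(u_1 \ctr (W_T - W_U)\bigr)$: the vector $W_T - W_U$ is not tangent to $T$, so this piece is genuinely new and cannot be folded into $\koszul_T K^{k+1}(T)$ by formal manipulation. Everything hinges on Proposition \ref{prop:kcaffine}, which forces this a priori complicated piecewise-polynomial object to collapse to an honest affine form on $T$; once that is known, the remaining verification that affine forms sit inside $M^k(T)$ is routine.
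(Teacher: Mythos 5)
Your proof is correct, and it reaches the result by a route that is genuinely different in structure from the paper's, though the two share the same crux: splitting the Koszul field as $X_U = X_T + (W_T - W_U)$ and invoking Proposition \ref{prop:kcaffine} with the degenerate chain $T \subcell T \subcell U$ to force the transverse contraction term to be affine on $T$ (the paper uses exactly this instance of that proposition as well). The difference: the paper never decomposes $u$; it verifies the membership criterion of the alternative characterization (\ref{eq:altdefa}) for $v = \pull_T u$ in one stroke, via
\begin{equation*}
v - \koszul_T \rmd v = \pull_T ( u - \koszul_U \rmd u) - \pull_T ( \rmd u \ctr (W_U - W_T)),
\end{equation*}
applying Proposition \ref{prop:kcaffine} to $\rmd u \in K^{k+1}(U)$. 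Since that characterization only requires the correction term $v - \koszul_T \rmd v$ to lie in the piecewise-affine space $\rmC^0\poly^1\difo^{k}(\refi_{k-1} (T))$, the paper never needs your closing lemma that genuinely affine $k$-forms belong to $M^k(T)$: a global affine form sits trivially in the piecewise-affine space. Your route, which works from the defining sum $M^k(U) = K^k(U) + \koszul_U K^{k+1}(U)$ and treats $u_0$ and $\koszul_U u_1$ separately, buys something in exchange: you apply Proposition \ref{prop:kcaffine} to $u_1$, which lies in $K^{k+1}(U)$ by construction, whereas the paper's application to $\rmd u$ tacitly uses that $\rmd$ maps $M^k(U)$ into $K^{k+1}(U)$, i.e.\ that $\rmd \koszul_U K^{k+1}(U) \subseteq K^{k+1}(U)$ --- true, but an unstated verification. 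The price is your extra step that affine forms lie in $M^k(T)$, and your Poincar\'e-identity argument for it is correct: the constant form $\rmd w$ is in $K^{k+1}(T)$, $\poincare_T \rmd w = (k+1)^{-1}\koszul_T \rmd w \in \koszul_T K^{k+1}(T)$, and $\rmd \poincare_T w = w - \poincare_T \rmd w$ is affine and closed, hence in $K^k(T)$. Your remaining ingredients --- the refinement lemma giving $\pull_T K^k(U) \subseteq K^k(T)$ and $\pull_T K^{k+1}(U) \subseteq K^{k+1}(T)$, and the tangency of $X_T$ along $T$ allowing contraction to commute with pullback --- are exactly the (mostly implicit) background facts the paper's proof also relies on.
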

\begin{proof}
We use the characterization (\ref{eq:altdefa}) which applies also to $M^k(T)$.

Choose $u \in M^k(U)$ and put $v = \pull_T u$. We have:
\begin{align}
v - \koszul_T \rmd v & = v - \pull_T (\rmd u \ctr X_T),\\
& = \pull_T ( u - \koszul_U \rmd u) - \pull_T ( \rmd u \ctr (W_U - W_T)).
\end{align}
The first term in this difference is in $M^k(T)$ by the characterization (\ref{eq:altdefa}) applied to $M^k(U)$ and $M^k(T)$. The second term is affine on $T$, by applying Proposition \ref{prop:kcaffine} to $\rmd u \in K^{k+1}(U)$, so it's also in $M^k(T)$.
\end{proof}

Hence $M$ defines a finite element system with respect to pull-backs. However this is not the restriction operator that interests us for the Stokes equation.

It seems useful to define:
\begin{equation}
\bbW_T = \myspan \{ W_S - W_U \ : \  U \in \subcells(S) \textrm{  and } T \subcell U \subcell S \}.
\end{equation}
Motivated by the above considerations we define, for any simplex $T \in \subcells(S)$:
\begin{align}
A^k(T) = \{ & (u,v) \in \rmC^0\poly^2(\refi_{k-1} (T)) \otimes \alter^k(\bbV) \, \oplus \, \rmC^0\poly^1(\refi_{k} (T)) \otimes \alter^{k+1}(\bbV) \ : \ \nonumber\\ 
& (u,v) \textrm{ is admissible and} \pull_T u \in M^k(T) \textrm{ and}\nonumber\\
& \forall Y \in \bbW_T \quad \pull_T(v \ctr Y) \textrm{ and } \pull_T((u - \kappa_T v) \ctr Y) \textrm{ are affine.} \} .
\end{align}
When $T$ is a vertex $V$ this definition reduces to:
\begin{equation}
A^k(V) = \alter^k(\bbV) \oplus \alter^{k+1}(\bbV).
\end{equation}

\begin{proposition}
The spaces $A^k(T)$ constitute a finite element system, with respect to restrictions which are double-traces and differential (\ref{eq:difadm}).
\end{proposition}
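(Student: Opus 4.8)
The plan is to verify the axioms of a finite element system one at a time for the family $A^\bs$, taking the differential to be $\sfd$ from (\ref{eq:difadm}) and the restriction $\rest_{T'T}$ to be the double-trace, i.e. the pair of $\alter$-valued functions $(u,v)$ restricted as functions to the subcell $T'$. Most axioms are routine, so I would dispatch them first. For the differential: if $(u,v)\in A^k(T)$ then $\sfd(u,v)=(v,0)$, and I must check this lies in $A^{k+1}(T)$. The degree and refinement indices match, since $v\in\rmC^0\poly^1(\refi_k(T))\otimes\alter^{k+1}(\bbV)\subseteq \rmC^0\poly^2(\refi_k(T))\otimes\alter^{k+1}(\bbV)$; admissibility of $(v,0)$ is $\rmd\pull_T v=0$, which holds because $\pull_T v=\rmd\pull_T u$; the membership $\pull_T v=\rmd\pull_T u\in\rmd M^k(T)\subseteq M^{k+1}(T)$; and the two affineness requirements for $(v,0)$ reduce to $\pull_T(v\ctr Y)$ being affine, which is exactly part of the hypothesis $(u,v)\in A^k(T)$. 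The identity $\sfd^{k+1}\sfd^k=0$ is immediate. The inclusion of constants $\const_T(c)=(c,0)$ lands in $A^0(T)$, since constants lie in $K^0(T)\subseteq M^0(T)$ and the remaining conditions are trivial, and it is annihilated by $\sfd$; for a $k$-cell $T$ the evaluation $\eval_T(u,v)=\int_T\pull_T u$ satisfies (\ref{eq:Stokes}) by the ordinary Stokes theorem, using $\pull_T v=\rmd\pull_T u$ on the image of $\sfd$.

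The real content is to show that the double-trace maps $A^k(T)$ into $A^k(T')$ for $T'\subcell T$; by transitivity of double-traces it suffices to treat $T'$ a facet of $T$ and then compose. Admissibility of $(u|_{T'},v|_{T'})$ follows from admissibility on $T$, since the exterior derivative commutes with pullback and hence $\rmd\pull_{T'}u=\pull_{T'}v$. The membership $\pull_{T'}u\in M^k(T')$ follows from the proposition that pullback gives a map $\pull_{T'}\colon M^k(T)\to M^k(T')$, applied to $\pull_T u\in M^k(T)$. Compatibility of restriction with $\sfd$ and functoriality $\rest_{T''T}=\rest_{T''T'}\rest_{T'T}$ are then immediate from the formulas, as both the differential and the restriction act on the pair $(u,v)$ by the obvious componentwise operations.

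The crux, and the step I expect to be the main obstacle, is verifying the affineness conditions on $T'$, which are indexed by the larger space $\bbW_{T'}\supseteq\bbW_T$. Here I would first observe that when $\dim T'\le k$ the face $T'$ is not refined by $\refi_k(T)$ nor by $\refi_{k-1}(T)$, so $u|_{T'}$ and $v|_{T'}$ are globally affine on $T'$ and the conditions hold trivially; thus only faces with $\dim T'>k$ require work. For these I would decompose each generator $W_S-W_U$ of $\bbW_{T'}$ (with $T'\subcell U\subcell S$) as $(W_S-W_T)+(W_T-W_U)$: the contraction against $W_S-W_T\in\bbW_T$ is controlled, after restriction, by the affineness hypothesis built into $A^k(T)$, while the tangential part $W_T-W_U\in\Tan T$ is controlled by applying Proposition \ref{prop:kcaffine} (with ambient simplex $T$) to the closed form $\pull_T v=\rmd\pull_T u\in K^{k+1}(T)$ and to $\pull_T u$. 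The analogous decomposition, together with the identities relating $\koszul_{T'}$ and $\koszul_T$ exploited in the preceding proposition, handles the term $(u-\koszul_{T'}v)\ctr Y$. The delicate point is that $\bbW_{T'}$ may contain directions $W_S-W_U$ with $U\not\subcell T$ (for instance pointing through a neighbouring facet), which are \emph{not} in $\bbW_T+\Tan T$; closing this gap is exactly where the membership $\pull_T v\in K^{k+1}(T)$ must be combined with the preceding two propositions, and I would expect to need an induction on $\dim T'$ (equivalently on codimension) mirroring the inductive structure of Propositions \ref{prop:inpointzero}--\ref{prop:unistep} to recover the missing affineness.
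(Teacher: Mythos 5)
Your overall architecture matches the paper's: the routine axioms are dispatched essentially as the paper does (it calls stability under the differential ``straightforward''), and the substance is that double-traces map $A^k$ into $A^k$ of subcells, for which the paper simply invokes the three propositions that precede the statement. But your handling of that substantive step has a genuine gap, which you yourself flag in your last paragraph and do not close. The observation you are missing is that Proposition \ref{prop:kcaffine} and its companion are already quantified over \emph{all} faces $U$ with $T' \subcell U \subcell S$: for $u \in A^k(S)$ they assert affineness on $T'$ of $\pull_{T'}(\rmd u \ctr (W_S - W_U))$ and $\pull_{T'}((u - \koszul_{T'} \rmd u) \ctr (W_S - W_U))$ for \emph{every} generator $W_S - W_U$ of $\bbW_{T'}$, with no decomposition required. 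So restriction from the top cell needs no splitting of $Y$ through $W_T$, and the ``delicate point'' you describe does not arise there; it is manufactured by your facet-by-facet reduction, which forces you through the intermediate restrictions $A^k(U) \to A^k(T')$ with $T' \subcell U \subcellstrict S$. For those, the paper's ``similar arguments'' consist in applying the same two contraction propositions \emph{intrinsically on $U$} — to $\pull_U u \in M^k(U)$ and $\pull_U v = \rmd\, \pull_U u \in K^{k+1}(U)$ — which controls the relative directions $W_U - W_{U^\sharp}$ for $T' \subcell U^\sharp \subcell U$ on top of the inherited $\bbW_U$-directions, the passage from $\koszul_U$ to $\koszul_{T'}$ being handled by $u - \koszul_{T'} v = (u - \koszul_U v) + v \ctr (W_U - W_{T'})$ exactly as in the companion proposition's proof. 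Whether these controlled directions span $\bbW_{T'}$ — your generators $W_S - W_{U'}$ with $U'$ incomparable to $U$ — is a span condition on the inpoints of the kind isolated in the paper's closing remark on $\bbW_T \oplus \vect T = \bbV$; your proposed induction ``mirroring Propositions \ref{prop:inpointzero}--\ref{prop:unistep}'' is not the right tool (those are unisolvence statements about vanishing degrees of freedom, not affineness statements about contractions), so as written this step is simply open.

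There is also a concrete error in your triage of trivial cases. For $\dim T' \le k$ you claim $u|_{T'}$ and $v|_{T'}$ are globally affine so that the conditions hold trivially; $v|_{T'}$ is indeed affine there, but $u|_{T'} \in \rmC^0\poly^2(\refi_{k-1}(T'))\otimes \alter^k(\bbV)$ is (piecewise) \emph{quadratic}, so the requirement that $\pull_{T'}((u - \koszul_{T'} v)\ctr Y)$ be affine remains a live constraint: by degree of the pullback it becomes vacuous only for $\dim T' \le k-2$, and on faces of dimension $k-1$ and $k$ your case split skips it entirely. This part is repairable, since the contraction propositions place no restriction on $\dim T'$, but as structured your proof omits these faces.
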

\begin{proof}
That restrictions map from $A^k(S)$ to $A^k(T)$ was proved in the preceding three propositions.

That they also map from $A^k(U)$ to $A^k(T)$ when $T \subcell U \subcell S$ follows from similar arguments. 

Stability under the differential is straightforward.
\end{proof}

\begin{proposition} \label{prop:azero}
Suppose $T \in \subcells(S)$ is not a vertex. If $k = \dim T$ we have:
\begin{equation}
\dim A^k_0(T) \leq 1.
\end{equation}
If $k \neq \dim T$, $A^k_0(T) = 0$.
\end{proposition}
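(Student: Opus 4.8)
The plan is to argue by descending induction on $k$ for a fixed cell $T$ with $\dim T = m\ge 1$. The base case is $k > n := \dim\bbV$, where $\alter^k(\bbV)=0$ forces $A^k_0(T)=0$, which is consistent with the claim since $k\ne m$. For the inductive step, the first observation I would exploit is that the differential commutes with restriction, so if $(u,v)\in A^k_0(T)$ then $\sfd(u,v)=(v,0)$ lies in $A^{k+1}_0(T)$; by the induction hypothesis this space is $\{0\}$ when $k+1\ne m$ and at most one-dimensional when $k+1=m$. Thus the ``$v$-slot'' of any element of $A^k_0(T)$ is already tightly constrained. The whole strategy is then to separate an element into its tangential part $\pull_T u$ (controlled by the unisolvence results of \S\ref{sec:toolhigh}) and its transverse part (controlled by the contraction conditions in the definition of $A^k(T)$).

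For the tangential part, set $a:=\pull_T u\in M^k(T)$. By admissibility $\rmd a=\pull_T v=:b$, and since $u,v$ vanish on $\partial T$, the form $b$ is a closed element of $\rmC^0\poly^1\difo^{k+1}(\refi_k(T))$ that is zero at the vertices of $T$, with vanishing integral on every $(k{+}1)$-face: on proper faces because $v$ vanishes there, and on $T$ itself (the case $k+1=m$) by Stokes, using that $a$ vanishes on $\partial T$. Proposition \ref{prop:unistep} then gives $b=\rmd a=0$, whereupon the characterization (\ref{eq:altdefa}) shows $a=a-\koszul_T\rmd a$ is piecewise affine, i.e.\ $a\in K^k(T)$. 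A second application of Proposition \ref{prop:unistep} to $a$ (vanishing at vertices, with all $k$-face integrals zero except possibly $\int_T a$ when $k=m$) shows that $a=0$ if $k\ne m$, and that $a$ ranges over a space of dimension at most one, parametrized by $\int_T a$, when $k=m$.

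It remains to show that $(u,v)$ is determined by $a=\pull_T u$, i.e.\ that $(u,v)\mapsto\pull_T u$ is injective on $A^k_0(T)$, and this recovery of the transverse data is the main obstacle. The mechanism I would use is twofold. First, for each $Y\in\bbW_T$ the forms $\pull_T(v\ctr Y)$ and $\pull_T((u-\koszul_T v)\ctr Y)$ are affine and, since $u,v$ vanish on $\partial T$, they vanish on $\partial T$; because an affine form on an $m$-simplex with $m\ge 1$ that vanishes on the boundary is identically zero, both pullbacks vanish. Since $X_T$ is tangent to $T$, $\koszul_T$ commutes with $\pull_T$ on $T$, so $\pull_T(\koszul_T v\ctr Y)=-\koszul_T\pull_T(v\ctr Y)=0$ and the second condition collapses to $\pull_T(u\ctr Y)=0$. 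Second, I would verify the linear-algebra identity $T_0+\bbW_T=\bbV$, where $T_0$ is the tangent space of $T$: for any vertex $V\notin T$ and $U=T\cup\{V\}$ one has $W_U-W_T=(W_S-W_T)-(W_S-W_U)\in\bbW_T$, and its transverse projection is a positive multiple of the transverse projection of $V$, so these vectors, together with $T_0$, span $\bbV$.

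Combining these, the conditions $\pull_T u=0$, $\pull_T v=0$ and $\pull_T(u\ctr Y)=\pull_T(v\ctr Y)=0$ for all $Y\in\bbW_T$ kill the components of $u$ and $v$ with at most one transverse leg; the remaining components, carrying two or more transverse legs, are the delicate point, and I would dispatch them by an inner induction that contracts once with a vector of $\bbW_T$ and feeds the resulting lower-degree data back into the descending induction hypothesis (which has already forced $v=0$, hence via admissibility and (\ref{eq:altdefa}) trivializes the corresponding part of $u$). Once injectivity of $(u,v)\mapsto\pull_T u$ is established, the dimension bounds on $\pull_T u$ from the second paragraph transfer verbatim: $\dim A^k_0(T)=0$ for $k\ne\dim T$ and $\dim A^k_0(T)\le 1$ for $k=\dim T$. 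I expect the control of the multiply-transverse components to be where the real work lies, the tangential estimate and the $v$-reduction being comparatively routine consequences of Propositions \ref{prop:inpointzero}, \ref{prop:duconst} and \ref{prop:unistep}.
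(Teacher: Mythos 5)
Your route is, for its first two thirds, the paper's own: kill the tangential data with the unisolvence machinery of \S\ref{sec:toolhigh} (the paper does this by citing Proposition \ref{prop:overdeta}, whose proof is exactly your two applications of Proposition \ref{prop:unistep}, first to $\pull_T v=\rmd\,\pull_T u$ and then to $\pull_T u$, with the one-dimensional leeway $\int_T \pull_T u$ when $k=\dim T$), then use the affineness clauses in the definition of $A^k(T)$ together with vanishing on $\partial T$ to get $\pull_T(v\ctr Y)=0$ and $\pull_T(u\ctr Y)=0$ for all $Y\in\bbW_T$. Two of your additions are correct and actually more careful than the paper: the collapse of the condition on $(u-\koszul_T v)\ctr Y$ to one on $u\ctr Y$, using that $X_T$ is tangent to $T$, and the verification that $\vect T+\bbW_T=\bbV$ via $W_U-W_T=(W_S-W_T)-(W_S-W_U)\in\bbW_T$, which the paper asserts without proof. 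The descending induction on $k$ that frames your write-up, on the other hand, is never genuinely used: the tangential reduction comes straight from Proposition \ref{prop:unistep}, not from any hypothesis at index $k+1$.

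The genuine gap is the last step, and the mechanism you propose for it does not work. You correctly observe that for cells of codimension at least two (edges when $n=3$) the derived conditions only see the components of $u$ and $v$ carrying at most one leg dual to $\bbW_T$: pointwise, a form $\omega\in\alter^k(\bbV)$ with $\pull_T\omega=0$ and $\pull_T(\omega\ctr Y)=0$ for all $Y\in\bbW_T$ may still have arbitrary components with two or more $\bbW_T$-legs. But your ``inner induction that contracts once with a vector of $\bbW_T$'' cannot be fed into your descending induction hypothesis: contraction lowers the form degree from $k$ to $k-1$, whereas the descending hypothesis concerns indices $k+1$ and above, and in any case $u\ctr Y$ is never exhibited as (part of) an element of $A^{k-1}_0(T)$ --- none of the membership conditions (an admissible partner, $\pull_T\in M^{k-1}(T)$, affineness of its own contractions) are checked for it --- so no induction hypothesis applies. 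The parenthetical appeal to ``$v=0$ already forced'' is likewise circular at the index $k=\dim T-1$, where the hypothesis only gives $\dim A^{\dim T}_0(T)\le 1$. Note that this is precisely the point where the paper's proof is a single sentence (``Since $\bbW_T+\vect T=\bbV$, the two conditions above give $u=0$ and $v=0$''), which as literally stated also only accounts for the components with at most one transverse leg; to close your argument you must control the multiply-transverse components directly from the definition of $A^k(T)$ --- for instance by reading the conditions on $v\ctr Y$ and $(u-\koszul_T v)\ctr Y$ as conditions on full traces along $T$ rather than on pullbacks, in which case ``affine and zero on $\partial T$'' annihilates every component with at least one $\bbW_T$-leg and the splitting $\bbV=\vect T\oplus\bbW_T$ finishes the proof --- and your sketch, as written, supplies no such control.
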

\begin{proof}
Suppose $(u,v) \in A^k_0(T)$ and that, in case $k = \dim T$, we have $\int_T \pull_T u = 0$.

By Proposition \ref{prop:overdeta} we get $\pull_T u = 0$ and $\pull_T v = 0$.

Then we get that, whenever $Y \in \bbW_T$,  $\pull_T(v \ctr Y) = 0$ and $\pull_T(u \ctr Y) = 0$, since they are affine and have trace $0$ on $\partial T$. 

Since $\bbW_T + \vect T  = \bbV$, the two conditions above give $u= 0$ and $v=0$.
\end{proof}

\begin{theorem}\label{theo:3dstokes}
The finite element system $A$ is compatible, when $n = 3$, and the split is Powell-Sabin/Worsey-Piper.
\end{theorem}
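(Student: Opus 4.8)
The plan is to invoke Theorem \ref{theo:altcomp}, which reduces compatibility to two ingredients once one knows that $A$ admits extensions: that $\const : \bbR \to A^0(T)$ is injective for every cell $T$, and that the complex $A^\bs_0(T)$ has cohomology concentrated at index $\dim T$, where $\eval$ induces an isomorphism onto $\bbR$. The injectivity of $\const$ is immediate, since the inclusion of a nonzero constant is nonzero in $A^0(T)$. The decisive simplification is Proposition \ref{prop:azero}: for any non-vertex $T \in \subcells(S)$ it gives $A^k_0(T) = 0$ whenever $k \neq \dim T$, so the complex $A^\bs_0(T)$ is supported in the single degree $\dim T$ and its cohomology is automatically concentrated there, of dimension at most one. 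This route also has the virtue of bypassing a direct verification of local exactness on the faces.

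First I would pin down the extension property together with the exact value of $\dim A^{\dim T}_0(T)$ by a single dimension count. Applying Proposition \ref{prop:extdim} to $\calT = \subcells(S)$ for a tetrahedron $S$, and using the identification $A^\bs(\subcells(S)) \cong A^\bs(S)$, gives
\begin{equation}
\dim A^k(S) \leq \sum_{T \in \subcells(S)} \dim A^k_0(T),
\end{equation}
with equality if and only if $A^k$ admits extensions on every $T \in \subcells(S)$. On the right, a vertex $V$ contributes $\dim A^k_0(V) = \binom{3}{k} + \binom{3}{k+1}$, while by Proposition \ref{prop:azero} a non-vertex cell contributes at most $1$ when $k = \dim T$ and $0$ otherwise. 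With four vertices, six edges, four triangles and one tetrahedron, this bounds the right-hand side by $16, 30, 20, 5$ for $k = 0,1,2,3$. These are exactly the dimensions $\dim A^k(S)$ of Proposition \ref{prop:dimcount} (which is where the Powell--Sabin/Worsey--Piper alignment enters, through the edge coplanarity of Proposition \ref{prop:alired}). Hence both inequalities are forced to equalities: $A$ admits extensions for every $k$, and $\dim A^{\dim T}_0(T) = 1$ for each non-vertex $T$.

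It remains to check the $\eval$-isomorphism. For a non-vertex $T$ the space $A^{\dim T}_0(T)$ is now known to be one-dimensional, and the argument in the proof of Proposition \ref{prop:azero} shows precisely that an element with $\int_T \pull_T u = 0$ vanishes; thus $\eval_T$ is injective on this line and therefore an isomorphism onto $\bbR$. For a vertex $V$ one computes directly: with $A^k(V) = \alter^k(\bbV) \oplus \alter^{k+1}(\bbV)$ and differential $(v_0, v_1) \mapsto (v_1, 0)$, the complex $A^\bs_0(V) = A^\bs(V)$ is exact in positive degrees and has $H^0 = \{(v_0, 0)\} \cong \bbR$, on which $\eval_V$ is evaluation of the $\alter^0$-component, hence an isomorphism. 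Together with the injectivity of $\const$, all hypotheses of Theorem \ref{theo:altcomp} are met, so $A$ is compatible.

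The main obstacle lives in the inputs rather than in the assembly: the tightness of the count rests on Proposition \ref{prop:dimcount}, and in particular on $\dim K^1(S) = 15$, which is exactly where the special geometry of the Powell--Sabin/Worsey--Piper split is used via Proposition \ref{prop:alired}. The delicate bookkeeping point is that one and the same string of equalities must simultaneously deliver the extension property (equality in Proposition \ref{prop:extdim}) and the precise value $\dim A^{\dim T}_0(T) = 1$ (saturation of the per-cell bound of Proposition \ref{prop:azero}); any slack in either the dimension computation or the subcell census would break the argument.
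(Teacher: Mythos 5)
Your proposal is correct and follows essentially the same route as the paper: squeezing the two inequalities from Propositions \ref{prop:dimcount}, \ref{prop:azero} and \ref{prop:extdim} to obtain flabbiness together with $\dim A^{\dim T}_0(T) = 1$, then concluding via Theorem \ref{theo:altcomp}. The only difference is that you spell out details the paper leaves implicit (the per-cell census, the vertex cohomology of $A^\bs_0(V)$, the injectivity of $\const$, and the $\eval$-injectivity extracted from the proof of Proposition \ref{prop:azero}), which is a faithful expansion rather than a different argument.
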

\begin{proof}
From Propositions \ref{prop:dimcount} and \ref{prop:azero} we get by computing: 
\begin{equation}
\dim A^k(S) \geq \sum_{T \in \subcells(S)} \dim A^k_0(T).
\end{equation}
Then Proposition \ref{prop:extdim} shows that equality holds and that the finite element system is flabby. In particular $\dim A^k_0(T) = 1$ for $k = \dim T$, and the integral provides an isomorphism to $\bbR$.

The cohomology of the sequence $A^\bs_0(T)$ is then trivially determined. One concludes by Theorem \ref{theo:altcomp}.
\end{proof}

{
\begin{center}
\begin{figure}
\setlength{\unitlength}{1.2cm}
\begin{picture}(2,2)(-0.5,0)
\put(0,0){
\begin{picture}(2,2)

{\color{red}\polygon*(-0.1, 0.6)(0, 0.9)(0, 1.5)
\color{lightorange}\polygon*(-0.1, 0.6)(0.368, 0.748)(0, 0.9)
\color{yellow}\polygon*(0, 0.9)(0, 1.5)(0.368, 0.748)
}

\put(-0.5, 0.5){\circle*{0.1}}
\put(-0.5, 0.5){\circle{0.2}}
\put(1, 0.5){\circle*{0.1}}
\put(1, 0.5){\circle{0.2}}
\put(0, 1.5){\circle*{0.1}}
\put(0, 1.5){\circle{0.2}}
\put(0, 0){\circle*{0.1}}
\put(0, 0){\circle{0.2}}
\put(0, 0){\line(-1, 1){0.5}}
\put(0, 0){\line(2, 1){1}}
\put(1, 0.5){\line(-1, 1){1}}
\put(-0.5, 0.5){\line(1, 2){0.5}}
\put(0, 0){\line(0, 1){1.5}}
\put(-0.1, 0.6){\circle*{0.05}}

\multiput(-0.5,0.5)(0.05,0){30}{\circle*{0.03}}

\put(0, 0){\line(1, 2){0.5}}
\put(0, 1.5){\line(1, -2){0.6}}
\put(1, 0.5){\line(-5, 2){1}}
\multiput(-0.1, 0.6)(0.05,0.015){10}{\circle*{0.02}}
\multiput(-0.1, 0.6)(0.05,-0.021){15}{\circle*{0.02}}
\multiput(-0.1, 0.6)(0.022,0.06){6}{\circle*{0.02}}
\multiput(-0.1, 0.6)(0.005, 0.05){18}{\circle*{0.02}}
\multiput(-0.1, 0.6)(0.008, -0.05){12}{\circle*{0.02}}
\multiput(-0.1, 0.6)(0.05, -0.004){20}{\circle*{0.02}}
\multiput(-0.1, 0.6)(0.048, 0.033){13}{\circle*{0.02}}
\end{picture}
}

\put(1.3, 0.8){\vector(1, 0){0.8}}
\put(1.44, 0.92){\footnotesize{$\grad$}}

\put(3,0){
\begin{picture}(2,2)

{\color{red}\polygon*(-0.1, 0.6)(0, 0.9)(0, 1.5)
\color{lightorange}\polygon*(-0.1, 0.6)(0.368, 0.748)(0, 0.9)
\color{yellow}\polygon*(0, 0.9)(0, 1.5)(0.368, 0.748)
}

\put(-0.5, 0.5){\circle*{0.1}}
\put(1, 0.5){\circle*{0.1}}
\put(0, 1.5){\circle*{0.1}}
\put(0, 0){\circle*{0.1}}
\put(0, 0){\line(-1, 1){0.5}}
\put(0, 0){\line(2, 1){1}}
\put(1, 0.5){\line(-1, 1){1}}
\put(-0.5, 0.5){\line(1, 2){0.5}}
\put(0, 0){\line(0, 1){1.5}}
\put(-0.1, 0.6){\circle*{0.05}}

{
\linethickness{0.3mm}
\put(0,0.4){\vector(0, 1){0.5}}
\put(0.25, 0.125){\vector(2, 1){0.5}}
\put(0.8, 0.7){\vector(-1, 1){0.5}}
}

\put(-0.2, 1.65){\footnotesize{$\curl$}}
\put(-0.9, 0.25){\footnotesize{$\curl$}}
\put(-0.2, -0.25){\footnotesize{$\curl$}}
\put(0.8, 0.25){\footnotesize{$\curl$}}

\multiput(-0.5,0.5)(0.05,0){30}{\circle*{0.03}}

\put(0, 0){\line(1, 2){0.5}}
\put(0, 1.5){\line(1, -2){0.6}}
\put(1, 0.5){\line(-5, 2){1}}
\multiput(-0.1, 0.6)(0.05,0.015){10}{\circle*{0.02}}
\multiput(-0.1, 0.6)(0.05,-0.021){15}{\circle*{0.02}}
\multiput(-0.1, 0.6)(0.022,0.06){6}{\circle*{0.02}}
\multiput(-0.1, 0.6)(0.005, 0.05){18}{\circle*{0.02}}
\multiput(-0.1, 0.6)(0.008, -0.05){12}{\circle*{0.02}}
\multiput(-0.1, 0.6)(0.05, -0.004){20}{\circle*{0.02}}
\multiput(-0.1, 0.6)(0.048, 0.033){13}{\circle*{0.02}}
\end{picture}
}

\put(4.3, 0.8){\vector(1, 0){0.8}}
\put(4.44, 0.92){\footnotesize{$\curl$}}

\put(6,0){
\begin{picture}(2,2)


{
\color{yellow}\polygon*(0, 0)(0, 1.5)(0.368, 0.748)
\color{red}\polygon*(0, 0)(0, 1.5)(-0.1, 0.6)
}

\put(0.368, 0.748){\vector(5, 2){0.3}}

\put(-0.5, 0.5){\circle*{0.1}}
\put(1, 0.5){\circle*{0.1}}
\put(0, 1.5){\circle*{0.1}}
\put(0, 0){\circle*{0.1}}
\put(0, 0){\line(-1, 1){0.5}}
\put(0, 0){\line(2, 1){1}}

\put(-0.2, 1.65){\footnotesize{$\div$}}
\put(-0.9, 0.25){\footnotesize{$\div$}}
\put(-0.2, -0.25){\footnotesize{$\div$}}
\put(0.8, 0.25){\footnotesize{$\div$}}

\put(1, 0.5){\line(-1, 1){1}}
\put(-0.5, 0.5){\line(1, 2){0.5}}
\put(0, 0){\line(0, 1){1.5}}
\put(-0.1, 0.6){\circle*{0.05}}

\multiput(-0.5,0.5)(0.05,0){30}{\circle*{0.03}}

\put(0, 0){\line(1, 2){0.38}}
\put(0, 1.5){\line(1, -2){0.38}}
\put(1, 0.5){\line(-5, 2){0.62}}
\multiput(-0.1, 0.6)(0.05,0.015){10}{\circle*{0.02}}
\multiput(-0.1, 0.6)(0.005, 0.05){18}{\circle*{0.02}}
\multiput(-0.1, 0.6)(0.008, -0.05){12}{\circle*{0.02}}
\multiput(-0.1, 0.6)(0.05, -0.004){20}{\circle*{0.02}}
\end{picture}

}

\put(7.3, 0.8){\vector(1, 0){0.8}}
\put(7.44, 0.92){\footnotesize{$\div$}}

\put(9,0){
\begin{picture}(2,2)

{
\color{yellow}\polygon*(0, 0)(0, 1.5)(1, 0.5)
\color{red}\polygon*(0, 0)(0, 1.5)(-0.1, 0.6)
}

\put(-0.5, 0.5){\circle*{0.1}}
\put(1, 0.5){\circle*{0.1}}
\put(0, 1.5){\circle*{0.1}}
\put(0, 0){\circle*{0.1}}
\put(0, 0){\line(-1, 1){0.5}}
\put(0, 0){\line(2, 1){1}}
\put(1, 0.5){\line(-1, 1){1}}
\put(-0.5, 0.5){\line(1, 2){0.5}}
\put(0, 0){\line(0, 1){1.5}}
\put(-0.1, 0.6){\circle*{0.05}}

\multiput(-0.5,0.5)(0.05,0){30}{\circle*{0.03}}

\multiput(-0.1, 0.6)(0.005, 0.05){18}{\circle*{0.02}}
\multiput(-0.1, 0.6)(0.008, -0.05){12}{\circle*{0.02}}
\multiput(-0.1, 0.6)(0.05, -0.004){20}{\circle*{0.02}}
\end{picture}

}

\end{picture}
\caption{Finite element complex described in Theorem \ref{theo:3dstokes}.\newline
Gives a Stokes pair with continuous pressure.  }
\end{figure}
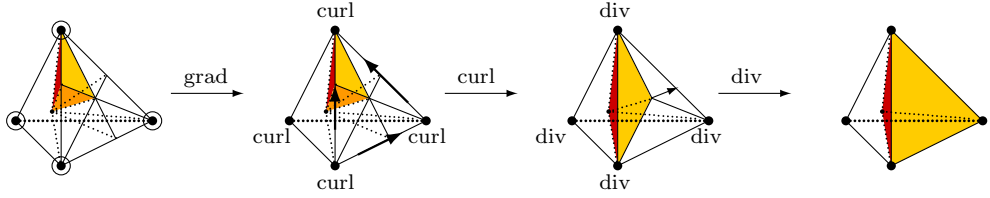
\end{center}
}

\begin{remark}
One could also check:
\begin{equation}
\{\pull_T u \ : \ (u,v) \in A^\bs(T) \} = M^k(T),
\end{equation}
and deduce from there that the sequences $A^\bs(T)$ resolve $\bbR$ by Lemma \ref{lem:exind}.
\end{remark}

\begin{remark}
A crucial question, to address the case of general $n$, is whether one has $\bbW_T\cap \vect T = 0$. It seems that the condition that the following sums are direct:
\begin{equation}
\bbW_T \oplus \vect T  = \bbV,
\end{equation}
captures the sort of alignment conditions one needs to impose. Even though $\bbW_T$ was defined in terms of the choices of inpoints in $S$, when several $n$-dimensional simplices meet at $T$, they should determine the same $\bbW_T$.
\end{remark}

\paragraph{Branching into Whithey forms.}

Consider the case of arbitrary $\dim S = n$. Let $\difo^k(S)$ denote the space of constant $k$-forms on $S$. Fix an index $\ell \in [0, n]$. Instead of (\ref{eq:adefk}) we define:
\begin{equation}\label{eq:adefkbis}
A^k(S) = \left\{ \begin{array}{ll}
K^k(S) + \poincare_{S} K^{k+1}(S),& k < \ell,\\
K^k(S) + \poincare_{S} \difo^{k+1}(S),& k = \ell,\\
\difo^{k}(S) +  \poincare_{S} \difo^{k+1}(S) &  k > \ell.
\end{array} \right.
\end{equation}
In this definition we recognize $\difo^{k}(S) +  \poincare_{S} \difo^{k+1}(S)$ as the space of Whitney $k$-forms on $S$, henceforth denoted $\whitney^k(S)$. Its canonical choice of degrees of freedom consists of integrals on $k$-dimensional faces.

\begin{proposition} We have that:\\
-- The sums in (\ref{eq:adefkbis}) are direct.\\
-- The following sequence is exact:
\begin{equation}
\xymatrix{
0 \ar[r] & \bbR \ar[r] & A^0(S) \ar[r] & A^1(S) \ar[r] & \ldots  \ar[r] & A^n(S) \ar[r] & 0.
}
\end{equation}
\end{proposition}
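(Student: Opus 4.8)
The plan is to recast the three cases of (\ref{eq:adefkbis}) into the single uniform description used for the unbranched complex (\ref{eq:adefk}), and then run the very same argument. Concretely, set
\[
V^k = \begin{cases} K^k(S), & k \le \ell,\\ \difo^k(S), & k > \ell,\end{cases}
\]
so that in all three regimes $A^k(S) = V^k + \poincare_S V^{k+1}$. The decisive structural fact is that every $V^k$ consists of \emph{closed} forms: elements of $K^k(S)$ satisfy $\eder u = 0$ by definition, and constant forms are trivially closed. Thus $V^\bs$ is a complex with zero differential, and this closedness is the only property of the summands that the proof of the Proposition following (\ref{eq:adefk}) actually exploited; so once this reduction is in place the branching causes no difficulty.

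For directness of the sum I would take $u \in V^k \cap \poincare_S V^{k+1}$ and write $u = \poincare_S w$ with $w \in V^{k+1}$ closed. Then $\poincare_S u = \poincare_S^2 w = 0$ by (\ref{eq:pzz}), while $\eder u = 0$ because $u \in V^k$. For $k \ge 1$ the homotopy identity (\ref{eq:poincare1}) gives $u = \poincare_S \eder u + \eder \poincare_S u = 0$; for $k = 0$ one uses instead that $u$ is constant (as $V^0 = K^0(S)$ is the constants) while $\poincare_S w$ vanishes at the inpoint $W$ of $S$ (clear from (\ref{eq:poinexpl})), whence $u = 0$.

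For exactness the key computation is that for closed $w \in V^{k+1}$ one has $\eder \poincare_S w = w$, directly from (\ref{eq:poincare1}) together with $\eder w = 0$. This yields $\eder A^k(S) = \eder \poincare_S V^{k+1} = V^{k+1} \subseteq A^{k+1}(S)$, so $\eder$ maps $A^\bs(S)$ into itself. Conversely, using the direct decomposition just established, write an arbitrary $u \in A^{k+1}(S)$ as $u = a + \poincare_S b$ with $a \in V^{k+1}$, $b \in V^{k+2}$; then $\eder u = \eder a + \eder \poincare_S b = b$, so $u$ is closed if and only if $b = 0$, i.e. if and only if $u \in V^{k+1}$. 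Hence $\ker(\eder\colon A^{k+1}(S) \to A^{k+2}(S)) = V^{k+1} = \eder A^k(S)$, giving exactness at every interior index. At the left end $\ker(\eder\colon A^0(S)\to A^1(S)) = V^0 = K^0(S)$ is precisely the constants, matching the injective map $\const\colon \bbR \to A^0(S)$; at the right end $V^{n+1} = 0$ forces $A^n(S) = V^n$, and since $\eder A^{n-1}(S) = V^n$ the last map is onto.

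The only point warranting attention is the transition index $k = \ell$, where $V^{\ell+1} = \difo^{\ell+1}(S)$ replaces $K^{\ell+1}(S)$; but both the directness argument and the identity $\eder \poincare_S w = w$ use nothing about $w$ beyond its being closed, so the splice between the $K$-spaces and the Whitney spaces $\whitney^k(S) = \difo^k(S) + \poincare_S \difo^{k+1}(S)$ is seamless. Every ambient fact invoked—$\poincare_S^2 = 0$, the homotopy formulas (\ref{eq:poincare1})–(\ref{eq:poincare2}), that $\poincare_S$ raises polynomial degree by one, and that $\refi_{k-1}(S)$ refines $\refi_k(S)$ so the mixed-refinement summands share a common space of piecewise polynomials—is already available. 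Consequently I expect no genuine obstacle: the entire content is the observation that the uniform form $A^k(S) = V^k + \poincare_S V^{k+1}$ with $V^\bs$ closed reduces the branched statement verbatim to the unbranched one.
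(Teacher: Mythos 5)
Your proposal is correct and follows essentially the same route as the paper, which proves the unbranched analogue (\ref{eq:adefk}) precisely by ``using that the elements of $K^k(S)$ and $K^{k+1}(S)$ have $0$ exterior derivative'' and leaves the branched case to the same argument; your uniform recasting $A^k(S) = V^k + \poincare_S V^{k+1}$ with $V^\bs$ closed is exactly the mechanism behind the paper's general Poincar\'e-operator construction in \S\ref{sec:poinc}, including the direct-sum identity (\ref{eq:dpw}). Your separate treatment of the case $k=0$ via vanishing of $\poincare_S w$ at the inpoint is a careful (and needed) supplement, since (\ref{eq:poincare1}) only applies to $k$-forms with $k \geq 1$.
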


The only new space in the above sequence is the one attached to the index $\ell$: before $\ell$ we have the space studied in the previous paragraph and after $\ell$ we have Whitney forms.
\begin{proposition}\label{prop:dofell}
On $A^\ell(S)$ the degrees of freedom consisting of:\\
-- evaluation at vertices,\\
-- integrals of pullback to $\ell$-dimensional faces of $T$,\\
overdetermine an element.
\end{proposition}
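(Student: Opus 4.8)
The plan is to mirror the proof of Proposition \ref{prop:overdeta}, the essential new feature being that the branched structure forces $\rmd u$ to be a \emph{constant} form rather than a general element of a $K$-space; this is exactly what lets us dispense with the vertex values of the exterior derivative that appeared among the degrees of freedom in Proposition \ref{prop:overdeta}. First I would record the structural fact that for every $u \in A^\ell(S)$ the exterior derivative $\rmd u$ is a constant $(\ell+1)$-form. Writing $u = u_0 + \poincare_S w$ with $u_0 \in K^\ell(S)$ (so $\rmd u_0 = 0$) and $w \in \difo^{\ell+1}(S)$ a constant $(\ell+1)$-form, the homotopy identity (\ref{eq:poincare1}) gives $w = \poincare_S \rmd w + \rmd \poincare_S w = \rmd \poincare_S w$, since $\rmd w = 0$. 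Hence $\rmd u = \rmd \poincare_S w = w \in \difo^{\ell+1}(S)$. This also re-exhibits the direct sum $A^\ell(S) = K^\ell(S) \oplus \poincare_S\difo^{\ell+1}(S)$ asserted before (\ref{eq:adefkbis}), with $\rmd u$ recovering the second component.

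Now suppose all the listed degrees of freedom vanish: $u$ is zero at every vertex of $S$, and $\int_T \pull_T u = 0$ for every $\ell$-dimensional face $T$ of $S$. The first step is to deduce $\rmd u = 0$. If $\ell = n$ this is automatic, as $\rmd u$ is then an $(n+1)$-form on an $n$-dimensional space. Otherwise, for each $(\ell+1)$-face $T$ of $S$, Stokes' theorem gives $\int_T \rmd u = \sum_{T' \in \subcells^\ell(T)} \orient(T,T')\,\int_{T'} \pull_{T'} u = 0$. Since $\rmd u$ is a \emph{constant} $(\ell+1)$-form, I fix one vertex $V_0$ of $S$ as origin and use the integration formula $\int_{[V_0,V_{i_1},\dots,V_{i_{\ell+1}}]}\rmd u = \tfrac{1}{(\ell+1)!}\,(\rmd u)(V_{i_1}-V_0,\dots,V_{i_{\ell+1}}-V_0)$: the vanishing of all these face integrals forces $\rmd u$ to annihilate every $(\ell+1)$-tuple of the basis vectors $V_i - V_0$, whence $\rmd u = 0$.

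With $\rmd u = 0$, the direct sum decomposition yields $w = 0$, so $u = u_0 \in K^\ell(S) \subseteq \rmC^0\poly^1\difo^\ell(\refi_{\ell-1}(S))$ with $\rmd u = 0$. The surviving hypotheses — that $u$ vanishes at the vertices of $S$ and that $\int_T \pull_T u = 0$ for all $\ell$-faces $T$ — are precisely the assumptions of Proposition \ref{prop:unistep}, which therefore gives $u = 0$. (The degenerate case $\ell = 0$ is even simpler, since then $K^0(S)$ consists of constants, which vanish once they vanish at a vertex.) This establishes that the degrees of freedom overdetermine an element.

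I expect the only point needing genuine care to be the passage from the vanishing of the $(\ell+1)$-face integrals to $\rmd u = 0$: one must verify that the constant form $\rmd u$ is really pinned down by integrals over the faces of the single simplex $S$ — which is where the nondegeneracy of the simplex enters through the basis $\{V_i - V_0\}$ — and one must treat the boundary index $\ell = n$ separately. Everything else is a direct transcription of the reasoning behind Proposition \ref{prop:overdeta}, with the constancy of $\rmd u$ replacing the role previously played by its vertex values, and with Proposition \ref{prop:unistep} supplying the final step.
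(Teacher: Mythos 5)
Your proof is correct and follows the same two-step route as the paper's: first deduce $\rmd u = 0$ from the vanishing of the $\ell$-face integrals, then conclude $u = 0$ via Proposition \ref{prop:unistep}. The only difference is one of detail: where the paper simply invokes ``the theory of Whitney forms'' for the first step, you make it explicit — using the homotopy identity (\ref{eq:poincare1}) to show $\rmd u \in \difo^{\ell+1}(S)$ is constant, then Stokes together with the integration formula for constant forms on simplices to pin it down — and your separate treatment of the boundary cases $\ell = n$ and $\ell = 0$ is sound.
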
 
\begin{proof}
If $u \in A^\ell(S)$ has all its degrees of freedom equal to $0$, then one checks first that $\rmd u = 0$, from the theory of Whitney forms. Then one deduces that $u = 0$ from Proposition \ref{prop:unistep}.
\end{proof}

To get a finite element system we define first, for $k = \ell$:
\begin{align}
N^k(T) = \{ &  u \in \rmC^0\poly^2\difo^k(\refi_{k-1} (T)) \ : \ \rmd u \in \difo^{k+1}(T) \textrm{ and }\\
&  u - \koszul_T \rmd u \in  \rmC^0\poly^1\difo^{k}(\refi_{k-1} (T)) \}.
\end{align}
For $T \in \subcells(S)$ we then put:
\begin{align}
A^k(T) = \{ & u \in \rmC^0\poly^2(\refi_{k-1} (T)) \otimes \alter^k(\bbV)  \ : \ \pull_T u \in N^k(T) \textrm{ and}\nonumber\\
& \forall Y \in \bbW_T \quad \pull_T(u \ctr Y)  \textrm{ is affine.} \} .
\end{align}
For $k < \ell$ one uses the previously defined spaces. For $k > \ell$ one uses Whitney forms. This gives a finite element system.

\begin{proposition} In the case $n= 3$ the degrees of freedom described in Proposition \ref{prop:dofell} are unisolvent and we get two new compatible finite element systems for $\ell = 2$ and $\ell = 1$.
\end{proposition}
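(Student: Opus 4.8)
The plan is to reproduce, in the branched setting, the mechanism of Theorem~\ref{theo:3dstokes}: for every subcell $T$ obtain an upper bound on $\dim A^k_0(T)$ that is attained only at $k=\dim T$, bound $\dim A^k(S)$ below by the matching degree-of-freedom counts, deduce equality in Proposition~\ref{prop:extdim}, and finish with Theorem~\ref{theo:altcomp}. Since the spaces at indices $k<\ell$ coincide with those of the previous paragraph and the spaces at indices $k>\ell$ are Whitney forms, the only genuinely new work sits at the branching index $k=\ell$. For $k<\ell$ I would invoke Proposition~\ref{prop:azero} and for $k>\ell$ the classical local theory of Whitney forms; both already give $\dim A^k_0(T)=1$ precisely when $k=\dim T$ for a positive-dimensional cell, and $0$ otherwise, with the evaluation realizing the isomorphism onto $\bbR$ at top degree.

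First I would record the dimension lower bounds on the top-cell spaces. For $k\neq\ell$ these are the dimensions of Theorem~\ref{theo:3dstokes} (via Proposition~\ref{prop:dimcount}) and, for $k>\ell$, the number of $k$-faces. At $k=\ell$ the sum in (\ref{eq:adefkbis}) is direct and $\poincare_S=\tfrac{1}{\ell+1}\koszul_S$ is injective on constant $(\ell+1)$-forms, so $\dim A^\ell(S)=\dim K^\ell(S)+\binom{n}{\ell+1}$; with the bound on $\dim K^\ell(S)$ from Proposition~\ref{prop:dimcount} this gives, for $n=3$, $\dim A^2(S)\geq 16$ when $\ell=2$ and $\dim A^1(S)\geq 18$ when $\ell=1$. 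One checks that these values agree with the count of the degrees of freedom of Proposition~\ref{prop:dofell}, namely $(n+1)\binom{n}{\ell}$ vertex evaluations plus $\binom{n+1}{\ell+1}$ integrals on $\ell$-faces, and similarly at the remaining indices.

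The core step is the analogue of Proposition~\ref{prop:azero} at $k=\ell$. Given $u\in A^\ell_0(T)$, and $\int_T\pull_T u=0$ in case $\dim T=\ell$, Proposition~\ref{prop:dofell} applied on $T$ forces $\pull_T u=0$. Then for each $Y\in\bbW_T$ the form $\pull_T(u\ctr Y)$ is affine by the defining condition of $A^\ell(T)$ and has vanishing trace on $\partial T$, hence is zero; since the Powell--Sabin/Worsey--Piper choice of inpoints gives the direct decomposition $\bbV=\bbW_T\oplus\vect T$ in dimension $n=3$, the vanishing of $\pull_T u$ together with that of all $\pull_T(u\ctr Y)$ gives $u=0$. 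This yields $\dim A^\ell_0(T)\leq 1$, with equality only at $\dim T=\ell$. I expect this to be the main obstacle: one must check that the reduced, trace-type data at the branching index still controls the transverse components, and it is precisely the directness $\bbV=\bbW_T\oplus\vect T$ — available for these splits only because $n=3$ — that closes the argument.

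With these ingredients assembled, for each $k$ the second paragraph bounds $\dim A^k(S)$ below by the degree-of-freedom count, while the third (together with Proposition~\ref{prop:azero}, the Whitney theory, and the exact vertex complexes) bounds $\sum_{T\in\subcells(S)}\dim A^k_0(T)$ above by the same count; hence $\dim A^k(S)\geq\sum_{T\in\subcells(S)}\dim A^k_0(T)$. Proposition~\ref{prop:extdim} supplies the reverse inequality, so equality holds at every index: the system admits extensions and the upper bounds are saturated. Each complex $A^\bs_0(T)$ is then concentrated, with cohomology only in degree $\dim T$ — one-dimensional and carried isomorphically onto $\bbR$ by the evaluation for positive-dimensional cells, the vertex complexes being exact apart from the constants — so Theorem~\ref{theo:altcomp} gives compatibility of $A$ for both $\ell=2$ and $\ell=1$, and the saturation of the counts is exactly the asserted unisolvence of the degrees of freedom of Proposition~\ref{prop:dofell}.
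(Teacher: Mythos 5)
Your proposal is correct and follows essentially the paper's route: the paper's own proof consists precisely of the dimension counts via Proposition \ref{prop:dimcount}, namely $\dim A^{\ell}(S) = 15 + 1 = 16 = 4 \times 3 + 4$ for $\ell = 2$ and $\dim A^{\ell}(S) = 15 + 3 = 18 = 4 \times 3 + 6$ for $\ell = 1$, matched against the overdetermining degrees of freedom of Proposition \ref{prop:dofell}, with compatibility following the mechanism of Theorem \ref{theo:3dstokes} exactly as you assemble it. You additionally spell out the analogue of Proposition \ref{prop:azero} at the branching index $k=\ell$ (single-trace data, vanishing of the affine contractions $\pull_T(u \ctr Y)$, and the direct sum $\bbV = \bbW_T \oplus \vect T$), a step the paper leaves implicit but which proceeds exactly as you indicate.
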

\begin{proof} We use Proposition \ref{prop:dimcount}.
When we choose to branch at $\ell = 2$, we have: 
\begin{equation}
\dim A^\ell(S) =  15 + 1 = 16 = 4 \times 3 + 4. 
\end{equation}
When we choose to branch at $\ell = 1$ we have:
\begin{equation} 
\dim A^\ell(S) = 15 + 3 = 18 = 4 \times 3 + 6.
\end{equation}
In both cases this proves unisolvence.
\end{proof}

The case $\ell = 2$ of this proposition is described in Figure \ref{fig:3dstokesl2} and the case $\ell = 1$ is decribed in Figure \ref{fig:3dstokesl1}.

{
\begin{center}
\begin{figure}
\setlength{\unitlength}{1.2cm}
\begin{picture}(2,2)(-0.5,0)
\put(0,0){
\begin{picture}(2,2)

{\color{red}\polygon*(-0.1, 0.6)(0, 0.9)(0, 1.5)
\color{lightorange}\polygon*(-0.1, 0.6)(0.368, 0.748)(0, 0.9)
\color{yellow}\polygon*(0, 0.9)(0, 1.5)(0.368, 0.748)
}
\put(-0.5, 0.5){\circle*{0.1}}
\put(-0.5, 0.5){\circle{0.2}}
\put(1, 0.5){\circle*{0.1}}
\put(1, 0.5){\circle{0.2}}
\put(0, 1.5){\circle*{0.1}}
\put(0, 1.5){\circle{0.2}}
\put(0, 0){\circle*{0.1}}
\put(0, 0){\circle{0.2}}
\put(0, 0){\line(-1, 1){0.5}}
\put(0, 0){\line(2, 1){1}}
\put(1, 0.5){\line(-1, 1){1}}
\put(-0.5, 0.5){\line(1, 2){0.5}}
\put(0, 0){\line(0, 1){1.5}}
\put(-0.1, 0.6){\circle*{0.05}}

\multiput(-0.5,0.5)(0.05,0){30}{\circle*{0.03}}

\put(0, 0){\line(1, 2){0.5}}
\put(0, 1.5){\line(1, -2){0.6}}
\put(1, 0.5){\line(-5, 2){1}}
\multiput(-0.1, 0.6)(0.05,0.015){10}{\circle*{0.02}}
\multiput(-0.1, 0.6)(0.05,-0.021){15}{\circle*{0.02}}
\multiput(-0.1, 0.6)(0.022,0.06){6}{\circle*{0.02}}
\multiput(-0.1, 0.6)(0.005, 0.05){18}{\circle*{0.02}}
\multiput(-0.1, 0.6)(0.008, -0.05){12}{\circle*{0.02}}
\multiput(-0.1, 0.6)(0.05, -0.004){20}{\circle*{0.02}}
\multiput(-0.1, 0.6)(0.048, 0.033){13}{\circle*{0.02}}
\end{picture}
}

\put(1.3, 0.8){\vector(1, 0){0.8}}
\put(1.44, 0.92){\footnotesize{$\grad$}}

\put(3,0){
\begin{picture}(2,2)

{\color{red}\polygon*(-0.1, 0.6)(0, 0.9)(0, 1.5)
\color{lightorange}\polygon*(-0.1, 0.6)(0.368, 0.748)(0, 0.9)
\color{yellow}\polygon*(0, 0.9)(0, 1.5)(0.368, 0.748)
}
\put(-0.5, 0.5){\circle*{0.1}}
\put(1, 0.5){\circle*{0.1}}
\put(0, 1.5){\circle*{0.1}}
\put(0, 0){\circle*{0.1}}
\put(0, 0){\line(-1, 1){0.5}}
\put(0, 0){\line(2, 1){1}}
\put(1, 0.5){\line(-1, 1){1}}
\put(-0.5, 0.5){\line(1, 2){0.5}}
\put(0, 0){\line(0, 1){1.5}}
\put(-0.1, 0.6){\circle*{0.05}}

{
\linethickness{0.3mm}
\put(0,0.4){\vector(0, 1){0.5}}
\put(0.25, 0.125){\vector(2, 1){0.5}}
\put(0.8, 0.7){\vector(-1, 1){0.5}}
}

\put(-0.2, 1.65){\footnotesize{$\curl$}}
\put(-0.9, 0.25){\footnotesize{$\curl$}}
\put(-0.2, -0.25){\footnotesize{$\curl$}}
\put(0.8, 0.25){\footnotesize{$\curl$}}

\multiput(-0.5,0.5)(0.05,0){30}{\circle*{0.03}}

\put(0, 0){\line(1, 2){0.5}}
\put(0, 1.5){\line(1, -2){0.6}}
\put(1, 0.5){\line(-5, 2){1}}
\multiput(-0.1, 0.6)(0.05,0.015){10}{\circle*{0.02}}
\multiput(-0.1, 0.6)(0.05,-0.021){15}{\circle*{0.02}}
\multiput(-0.1, 0.6)(0.022,0.06){6}{\circle*{0.02}}
\multiput(-0.1, 0.6)(0.005, 0.05){18}{\circle*{0.02}}
\multiput(-0.1, 0.6)(0.008, -0.05){12}{\circle*{0.02}}
\multiput(-0.1, 0.6)(0.05, -0.004){20}{\circle*{0.02}}
\multiput(-0.1, 0.6)(0.048, 0.033){13}{\circle*{0.02}}
\end{picture}
}

\put(4.3, 0.8){\vector(1, 0){0.8}}
\put(4.44, 0.92){\footnotesize{$\curl$}}

\put(6,0){
\begin{picture}(2,2)

{
\color{yellow}\polygon*(0, 0)(0, 1.5)(0.368, 0.748)
\color{red}\polygon*(0, 0)(0, 1.5)(-0.1, 0.6)
}
\put(0.368, 0.748){\vector(5, 2){0.3}}

\put(-0.5, 0.5){\circle*{0.1}}
\put(1, 0.5){\circle*{0.1}}
\put(0, 1.5){\circle*{0.1}}
\put(0, 0){\circle*{0.1}}
\put(0, 0){\line(-1, 1){0.5}}
\put(0, 0){\line(2, 1){1}}


\put(1, 0.5){\line(-1, 1){1}}
\put(-0.5, 0.5){\line(1, 2){0.5}}
\put(0, 0){\line(0, 1){1.5}}
\put(-0.1, 0.6){\circle*{0.05}}

\multiput(-0.5,0.5)(0.05,0){30}{\circle*{0.03}}

\put(0, 0){\line(1, 2){0.38}}
\put(0, 1.5){\line(1, -2){0.38}}
\put(1, 0.5){\line(-5, 2){0.62}}
\multiput(-0.1, 0.6)(0.05,0.015){10}{\circle*{0.02}}
\multiput(-0.1, 0.6)(0.005, 0.05){18}{\circle*{0.02}}
\multiput(-0.1, 0.6)(0.008, -0.05){12}{\circle*{0.02}}
\multiput(-0.1, 0.6)(0.05, -0.004){20}{\circle*{0.02}}
\end{picture}

}

\put(7.3, 0.8){\vector(1, 0){0.8}}
\put(7.44, 0.92){\footnotesize{$\div$}}

\put(9,0){
\begin{picture}(2,2)

{\color{yellow}\polygon*(0, 1.5)(1, 0.5)(0, 0)
\color{darkorange}\polygon*(-0.5, 0.5)(0, 1.5)(0, 0)}

\put(0, 0){\line(-1, 1){0.5}}
\put(0, 0){\line(2, 1){1}}
\put(1, 0.5){\line(-1, 1){1}}
\put(-0.5, 0.5){\line(1, 2){0.5}}
\put(0, 0){\line(0, 1){1.5}}

\multiput(-0.5,0.5)(0.05,0){30}{\circle*{0.03}}

\end{picture}

}

\end{picture}
\caption{Regular complex with branching into Whitney forms at index two.\newline
Gives a Stokes pair with discontinuous pressure.\label{fig:3dstokesl2}}
\end{figure}
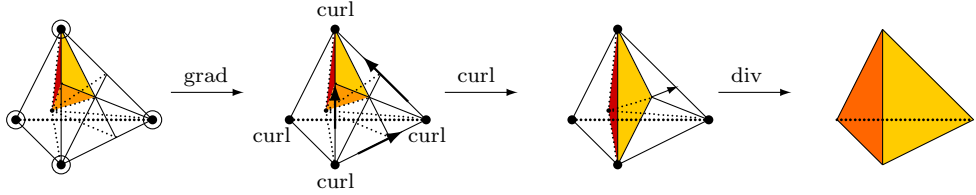
\end{center}
}

{
\begin{center}
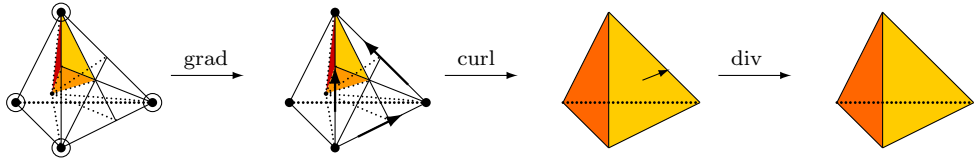
\begin{figure}
\setlength{\unitlength}{1.2cm}
\begin{picture}(2,2)(-0.5,0)
\put(0,0){
\begin{picture}(2,2)

{\color{red}\polygon*(-0.1, 0.6)(0, 0.9)(0, 1.5)
\color{lightorange}\polygon*(-0.1, 0.6)(0.368, 0.748)(0, 0.9)
\color{yellow}\polygon*(0, 0.9)(0, 1.5)(0.368, 0.748)
}
\put(-0.5, 0.5){\circle*{0.1}}
\put(-0.5, 0.5){\circle{0.2}}
\put(1, 0.5){\circle*{0.1}}
\put(1, 0.5){\circle{0.2}}
\put(0, 1.5){\circle*{0.1}}
\put(0, 1.5){\circle{0.2}}
\put(0, 0){\circle*{0.1}}
\put(0, 0){\circle{0.2}}
\put(0, 0){\line(-1, 1){0.5}}
\put(0, 0){\line(2, 1){1}}
\put(1, 0.5){\line(-1, 1){1}}
\put(-0.5, 0.5){\line(1, 2){0.5}}
\put(0, 0){\line(0, 1){1.5}}
\put(-0.1, 0.6){\circle*{0.05}}

\multiput(-0.5,0.5)(0.05,0){30}{\circle*{0.03}}

\put(0, 0){\line(1, 2){0.5}}
\put(0, 1.5){\line(1, -2){0.6}}
\put(1, 0.5){\line(-5, 2){1}}
\multiput(-0.1, 0.6)(0.05,0.015){10}{\circle*{0.02}}
\multiput(-0.1, 0.6)(0.05,-0.021){15}{\circle*{0.02}}
\multiput(-0.1, 0.6)(0.022,0.06){6}{\circle*{0.02}}
\multiput(-0.1, 0.6)(0.005, 0.05){18}{\circle*{0.02}}
\multiput(-0.1, 0.6)(0.008, -0.05){12}{\circle*{0.02}}
\multiput(-0.1, 0.6)(0.05, -0.004){20}{\circle*{0.02}}
\multiput(-0.1, 0.6)(0.048, 0.033){13}{\circle*{0.02}}
\end{picture}
}

\put(1.3, 0.8){\vector(1, 0){0.8}}
\put(1.44, 0.92){\footnotesize{$\grad$}}

\put(3,0){
\begin{picture}(2,2)

{\color{red}\polygon*(-0.1, 0.6)(0, 0.9)(0, 1.5)
\color{lightorange}\polygon*(-0.1, 0.6)(0.368, 0.748)(0, 0.9)
\color{yellow}\polygon*(0, 0.9)(0, 1.5)(0.368, 0.748)
}
\put(-0.5, 0.5){\circle*{0.1}}
\put(1, 0.5){\circle*{0.1}}
\put(0, 1.5){\circle*{0.1}}
\put(0, 0){\circle*{0.1}}
\put(0, 0){\line(-1, 1){0.5}}
\put(0, 0){\line(2, 1){1}}
\put(1, 0.5){\line(-1, 1){1}}
\put(-0.5, 0.5){\line(1, 2){0.5}}
\put(0, 0){\line(0, 1){1.5}}
\put(-0.1, 0.6){\circle*{0.05}}

{
\linethickness{0.3mm}
\put(0,0.4){\vector(0, 1){0.5}}
\put(0.25, 0.125){\vector(2, 1){0.5}}
\put(0.8, 0.7){\vector(-1, 1){0.5}}
}


\multiput(-0.5,0.5)(0.05,0){30}{\circle*{0.03}}

\put(0, 0){\line(1, 2){0.5}}
\put(0, 1.5){\line(1, -2){0.6}}
\put(1, 0.5){\line(-5, 2){1}}
\multiput(-0.1, 0.6)(0.05,0.015){10}{\circle*{0.02}}
\multiput(-0.1, 0.6)(0.05,-0.021){15}{\circle*{0.02}}
\multiput(-0.1, 0.6)(0.022,0.06){6}{\circle*{0.02}}
\multiput(-0.1, 0.6)(0.005, 0.05){18}{\circle*{0.02}}
\multiput(-0.1, 0.6)(0.008, -0.05){12}{\circle*{0.02}}
\multiput(-0.1, 0.6)(0.05, -0.004){20}{\circle*{0.02}}
\multiput(-0.1, 0.6)(0.048, 0.033){13}{\circle*{0.02}}
\end{picture}
}

\put(4.3, 0.8){\vector(1, 0){0.8}}
\put(4.44, 0.92){\footnotesize{$\curl$}}

\put(6,0){
\begin{picture}(2,2)

{\color{yellow}\polygon*(0, 1.5)(1, 0.5)(0, 0)
\color{darkorange}\polygon*(-0.5, 0.5)(0, 1.5)(0, 0)}

\put(0.368, 0.748){\vector(5, 2){0.3}}

\put(0, 0){\line(-1, 1){0.5}}
\put(0, 0){\line(2, 1){1}}


\put(1, 0.5){\line(-1, 1){1}}
\put(-0.5, 0.5){\line(1, 2){0.5}}
\put(0, 0){\line(0, 1){1.5}}

\multiput(-0.5,0.5)(0.05,0){30}{\circle*{0.03}}

\end{picture}

}

\put(7.3, 0.8){\vector(1, 0){0.8}}
\put(7.44, 0.92){\footnotesize{$\div$}}

\put(9,0){
\begin{picture}(2,2)

{\color{yellow}\polygon*(0, 1.5)(1, 0.5)(0, 0)
\color{darkorange}\polygon*(-0.5, 0.5)(0, 1.5)(0, 0)}
\put(0, 0){\line(-1, 1){0.5}}
\put(0, 0){\line(2, 1){1}}
\put(1, 0.5){\line(-1, 1){1}}
\put(-0.5, 0.5){\line(1, 2){0.5}}
\put(0, 0){\line(0, 1){1.5}}

\multiput(-0.5,0.5)(0.05,0){30}{\circle*{0.03}}

\end{picture}

}

\end{picture}
\caption{Regular complex with branching into Whitney forms at index one.\label{fig:3dstokesl1}}
\end{figure}
\end{center}
}

For arbitrary $n$ and for $\ell = n-1$, which is perhaps the most interesting case from the point of view of Stokes equation, we are able to prove unisolvence:

\begin{proposition}\label{prop:last} Consider the spaces defined by (\ref{eq:adefkbis}) and the degrees of freedom given in particular by Proposition \ref{prop:dofell}, with $\ell = n-1$. The given degrees of freedom on $A^{n-1}(S)$ and $A^n(S)$ are unisolvent. The dimensions are $\dim A^{n-1}(S) = (n+1)^2$ and $\dim A^{n}(S) = 1$.

The associated interpolator commutes with the divergence operator. 

This gives a minimal good element for continuous vectorfields with discontinuous divergence.
\end{proposition}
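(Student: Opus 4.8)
The plan is to reduce the whole statement to the top two spaces and to reuse the dimension count of $K^{n-1}(S)$ already carried out for the continuous-pressure element. First I would dispose of $A^n(S)$: since there are no nonzero $(n+1)$-forms on an $n$-dimensional space, $\difo^{n+1}(S)=0$, so at index $n$ the branched definition (\ref{eq:adefkbis}) collapses to $A^n(S)=\whitney^n(S)=\difo^n(S)$, the one-dimensional space of constant $n$-forms, whose single degree of freedom is $\int_S$. This already gives $\dim A^n(S)=1$. At index $\ell=n-1$ the definition reads $A^{n-1}(S)=K^{n-1}(S)+\poincare_S\difo^n(S)$, and the sum is direct by the preceding proposition (directness of the sums in (\ref{eq:adefkbis})).

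Next I would compute $\dim A^{n-1}(S)$. The second summand is one-dimensional: for a volume form $\omega\in\difo^n(S)$ one has $\poincare_S\omega\neq 0$ and, by (\ref{eq:poincare1}), $\rmd\poincare_S\omega=\omega\neq 0$, so $\poincare_S\omega$ is not closed and in particular lies outside $K^{n-1}(S)$. For the first summand I would simply invoke the computation in the proof of Proposition \ref{prop:nonegood}, where $\dim K^{n-1}(S)=n^2+2n$ is obtained by counting the $\rmd u=0$ constraints on $\rmC^0\poly^1\difo^{n-1}(\refi_{n-2}(S))$. Hence $\dim A^{n-1}(S)=n^2+2n+1=(n+1)^2$. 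Unisolvence then follows by matching this against the number of degrees of freedom: the data of Proposition \ref{prop:dofell} consists of the values at the $n+1$ vertices, each an element of $\alter^{n-1}(\bbV)$ of dimension $n$, together with the $n+1$ integrals of pullbacks to the $(n-1)$-dimensional faces, giving $(n+1)n+(n+1)=(n+1)^2$ functionals. Since Proposition \ref{prop:dofell} shows these overdetermine an element, i.e. the associated evaluation map has trivial kernel, and since their number equals $\dim A^{n-1}(S)$, they are unisolvent; the claimed dimensions follow at once.

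For the commuting property I would argue directly, since full compatibility of the whole system is established only for $n=3$. Write $I^{n-1},I^n$ for the interpolators attached to these degrees of freedom. Every element of $A^{n-1}(S)$ has globally constant exterior derivative (it vanishes on $K^{n-1}(S)$, while $\rmd\poincare_S\omega=\omega$ by (\ref{eq:poincare1})), so $\rmd I^{n-1}u$ and $I^n\rmd u$ both lie in the one-dimensional space $\difo^n(S)$ and it suffices to compare $\int_S$ of each. Using the finite element system Stokes identity (\ref{eq:Stokes}) on the top cell, then the fact that $I^{n-1}u$ reproduces the facet integrals of $u$, then classical Stokes for the smooth form $u$, and finally the definition of $I^n$,
\[
\int_S \rmd I^{n-1}u=\sum_{F}\orient(S,F)\int_F \pull_F I^{n-1}u=\sum_{F}\orient(S,F)\int_F \pull_F u=\int_S \rmd u=\int_S I^n\rmd u .
\]
Hence $\rmd I^{n-1}u=I^n\rmd u$, so the interpolator commutes with the exterior derivative, i.e. with the divergence on vector proxies. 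Minimality, in the sense that these dimensions realise the lower bounds imposed on any compatible system with double-trace restrictions (forcing $A^{n-1}(V)=\alter^{n-1}(\bbV)\oplus\alter^{n}(\bbV)$ at vertices and $\dim A^k_0(T)\geq 1$ on $k$-cells), then follows from the discussion in the last paragraph of \S\ref{sec:gfes}.

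The main obstacle is not the new arithmetic, which is a one-line shift from the continuous-pressure count: the top space drops from dimension $n+2$ to dimension $1$, and $A^{n-1}(S)$ shrinks correspondingly through the replacement of $\poincare_S K^n(S)$ by $\poincare_S\difo^n(S)$. The substantive input is the lower bound $\dim K^{n-1}(S)\geq n^2+2n$, which for general $n$ depends on the alignment savings of the Powell--Sabin/Worsey--Piper split and is borrowed wholesale from Proposition \ref{prop:nonegood}. The only genuinely new verification is the commutation, where the delicate point is confirming that $\rmd I^{n-1}u$ actually lands in the constant $n$-forms; this is precisely the structural reason the branched space at index $n-1$ was built as a closed part plus $\poincare_S\difo^n(S)$.
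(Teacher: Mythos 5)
Your treatment of unisolvence and the dimensions follows essentially the same route as the paper: the paper's (very terse) proof also observes $\dim A^n(S) = 1$ and then invokes the count from the proof of Proposition \ref{prop:nonegood} to get $\dim A^{n-1}(S) \geq n(n+2) + 1 = (n+1)^2$, which matches the number of degrees of freedom of Proposition \ref{prop:dofell}, so that overdetermination yields unisolvence and the exact dimensions. One phrasing caution: the constraint count in Proposition \ref{prop:nonegood} only gives the \emph{lower bound} $\dim K^{n-1}(S) \geq n^2 + 2n$ (the imposed conditions are not known a priori to be independent), so you should carry the inequality through and let the matching with the $(n+1)^2$ overdetermining functionals close the gap, rather than asserting the equality up front; the logic is the same, but stated in the right order it is not circular. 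Where you genuinely add something is the commutation: the paper asserts it without detail (it follows from the harmonic-interpolation machinery of \S\ref{sec:gfes}), whereas your direct argument --- $\rmd A^{n-1}(S) \subseteq \difo^n(S)$ because $\rmd$ kills $K^{n-1}(S)$ and $\rmd \poincare_S \omega = \omega$ by (\ref{eq:poincare1}), so both $\rmd I^{n-1}u$ and $I^n \rmd u$ are constant $n$-forms compared via (\ref{eq:Stokes}) and classical Stokes --- is a clean elementary substitute that also explains structurally why the branch at $\ell = n-1$ is built as closed forms plus $\poincare_S \difo^n(S)$.

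There is, however, a genuine slip in your minimality justification. For this element the divergence is \emph{discontinuous}, so at index $n-1$ the restriction operator is the single trace, not the double trace: the vertex spaces are $A^{n-1}(V) = \alter^{n-1}(\bbV)$, of dimension $n$, and not $\alter^{n-1}(\bbV) \oplus \alter^n(\bbV)$ as you wrote. With your stated vertex spaces (dimension $n+1$ each) the lower bound of \S\ref{sec:gfes} would be $(n+1)(n+1) + (n+1) = (n+1)(n+2)$, which is the count for the \emph{continuous}-divergence element of Proposition \ref{prop:nonegood} and would contradict your own conclusion $\dim A^{n-1}(S) = (n+1)^2$. The correct accounting is $(n+1) \cdot n$ from the vertices plus $(n+1) \cdot 1$ from the $(n-1)$-faces (using $\dim A^{n-1}_0(T) \geq 1$ on $(n-1)$-cells, from Theorem \ref{theo:altcomp}), giving exactly $(n+1)^2$; at index $n$ the restrictions are pullbacks of Whitney forms, the vertex spaces vanish, and the bound is $1$ per $n$-cell, matching $\dim A^n(S) = 1$. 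With that correction the minimality claim goes through as in the paper.
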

\begin{proof}
\tpoint We have $\dim A^n(S) = 1$, since it consists of the constants.

\tpoint The proof of Proposition \ref{prop:nonegood} gives the lowerbound:
\begin{equation}
\dim A^{n-1}(S) \geq n(n+2) + 1 = (n+1)^2.
\end{equation}
which is the number of degrees of freedom defined in Proposition \ref{prop:dofell}.
\end{proof}

\begin{remark}\label{rem:gn14b}
In \cite{GuzNei14b} Stokes pairs (with discontinuous pressure) are defined in dimension 3. Among these, their so-called reduced element has the same degrees of freedom as the element we consider in Proposition \ref{prop:last}. Their vectorfields are defined using certain rational functions related to a 2D $\rmC^1$-element of Zienkiewicz. They also describe an element in arbitrary dimension with the same degrees of freedom as we have. In this generalization, the face-bubbles of Bernardi-Raugel \cite{BerRau85} are modified using the Bogovskii integral operator. The obtained vectorfields are therefore quite different from ours and perhaps less explicit.
\end{remark} 

\section*{Outlook}
We finish with some points that merit further investigation, and which we hope to address in a not too distant future:
\begin{itemize}
\item We have not included error estimates, but, given that we have defined natural degrees of freedom, we believe these could be obtained by combining techniques developed for HCT (e.g. \cite{Cia91} \S 46) with general techniques developed for FES (especially in \cite{ChrMunOwr11}).
 
\item A first natural extension of the present work, would be to define spaces with high approximation order in arbitrary dimension, in particular high order elements for Stokes in dimension 3.

\item It is also possible to use the framework of (generalized) FES to describe the complex consisting of the Morley element, the Crouzeix-Raviart element and the piecewise constants (see e.g. \cite{Bre15}). A general framework to discuss many existing non-conforming complexes is within reach.

\item The examples discussed in this paper consist of differential forms on domains in a vector space. It seems possible also to extend the techniques to manifolds. This would provide a new method, to solve say the shallow water equations on the sphere.

\end{itemize}

\section*{Acknowledgements}

We are grateful to Richard Falk for pointing out the paper \cite{ArnDouGup84}, which has interesting connections with this one. We are also grateful to Shangyou Zhang for numerous bibliographical remarks.

\medskip
 
\noindent SHC is supported by the European Research Council through the FP7-IDEAS-ERC Starting Grant scheme, project 278011 STUCCOFIELDS.

\medskip

\noindent KH is supported by the China Scholarship Council (CSC), project 201506010013 and by the European Research Council through the FP7-IDEAS-ERC Advanced Grant scheme, project 650138 FEEC-A. The stimulating collaborations are achieved during his visit at University of Oslo (UiO) since September 2015.  He is grateful for the kind hospitality and support of UiO.

\bibliography{../Bibliography/alexandria,../Bibliography/newalexandria,../Bibliography/mybibliography}{}

\begin{thebibliography}{10}

\bibitem{Alf84}
P.~Alfeld.
\newblock A trivariate {C}lough-{T}ocher scheme for tetrahedral data.
\newblock {\em Comput. Aided Geom. Des.}, 1(2):169--181, November 1984.

\bibitem{AlfSor16}
P.~Alfeld and T.~Sorokina.
\newblock Linear differential operators on bivariate spline spaces and spline
  vector fields.
\newblock {\em BIT}, 56(1):15--32, 2016.

\bibitem{ArnDouGup84}
D.~N. Arnold, J.~Douglas~Jr, and C.~P. Gupta.
\newblock A family of higher order mixed finite element methods for plane
  elasticity.
\newblock {\em Numer. Math.}, 45(1):1--22, 1984.

\bibitem{ArnFalWin06}
D.~N. Arnold, R.~S. Falk, and R.~Winther.
\newblock Finite element exterior calculus, homological techniques, and
  applications.
\newblock {\em Acta Numer.}, 15:1--155, 2006.

\bibitem{ArnQin92}
D.~N. Arnold and J.~Qin.
\newblock Quadratic velocity/linear pressure stokes elements.
\newblock {\em Advances in computer methods for partial differential
  equations}, 7:28--34, 1992.

\bibitem{BerRau85}
C.~Bernardi and G.~Raugel.
\newblock Analysis of some finite elements for the {S}tokes problem.
\newblock {\em Math. Comp.}, 44(169):71--79, 1985.

\bibitem{Bre15}
S.~C. Brenner.
\newblock Forty years of the {C}rouzeix-{R}aviart element.
\newblock {\em Numer. Methods Partial Differential Equations}, 31(2):367--396,
  2015.

\bibitem{Chr07NM}
S.~H. Christiansen.
\newblock Stability of {H}odge decompositions in finite element spaces of
  differential forms in arbitrary dimension.
\newblock {\em Numer. Math.}, 107(1):87--106, 2007.
\newblock [preprint at arXiv:1007.1120].

\bibitem{Chr08M3AS}
S.~H. Christiansen.
\newblock A construction of spaces of compatible differential forms on cellular
  complexes.
\newblock {\em Math. Models Methods Appl. Sci.}, 18(5):739--757, 2008.

\bibitem{Chr09AWM}
S.~H. Christiansen.
\newblock Foundations of finite element methods for wave equations of {M}axwell
  type.
\newblock In {\em Applied {W}ave {M}athematics}, pages 335--393. Springer,
  Berlin {H}eidelberg, 2009.

\bibitem{ChrGil16}
S.~H. Christiansen and A.~Gillette.
\newblock Constructions of some minimal finite element systems.
\newblock {\em Math. Model. Numer. Anal.}, 50(3):833--850, 2016.
\newblock [preprint at arXiv:1504.04670].

\bibitem{ChrMunOwr11}
S.~H. Christiansen, H.~Z. Munthe-Kaas, and B.~Owren.
\newblock Topics in structure-preserving discretization.
\newblock {\em Acta Numerica}, 20:1--119, 2011.

\bibitem{ChrRap16}
S.~H. Christiansen and F.~Rapetti.
\newblock On high order finite element spaces of differential forms.
\newblock {\em Math. Comp.}, 85(296):517--548, 2016.
\newblock [preprint at arXiv:1306.4835].

\bibitem{ChrWin08}
S.~H. Christiansen and R.~Winther.
\newblock Smoothed projections in finite element exterior calculus.
\newblock {\em Math. Comp.}, 77(262):813--829, 2008.

\bibitem{Cia74}
P.~G. Ciarlet.
\newblock Sur l'\'el\'ement de {C}lough et {T}ocher.
\newblock {\em Rev. Fran\c{c}aise Automat. Informat. Recherche Op\'erationnelle
  S\'er. Rouge}, 8({\rm R}-2):19--27, 1974.

\bibitem{Cia91}
P.~G. Ciarlet.
\newblock Basic error estimates for elliptic problems.
\newblock In {\em Handbook of numerical analysis, {V}ol.\ {II}}, Handb. Numer.
  Anal., II, pages 17--351. North-Holland, Amsterdam, 1991.

\bibitem{DemBab03}
L.~Demkowicz and I.~Babu{\v{s}}ka.
\newblock {$p$} interpolation error estimates for edge finite elements of
  variable order in two dimensions.
\newblock {\em SIAM J. Numer. Anal.}, 41(4):1195--1208 (electronic), 2003.

\bibitem{DemBuf05}
L.~Demkowicz and A.~Buffa.
\newblock {$H^1$}, {$H({\rm curl})$} and {$H({\rm div})$}-conforming
  projection-based interpolation in three dimensions. {Q}uasi-optimal
  {$p$}-interpolation estimates.
\newblock {\em Comput. Methods Appl. Mech. Engrg.}, 194(2-5):267--296, 2005.

\bibitem{DouDup79}
J.~Douglas, T.~Dupont, P.~Percell, and R.~Scott.
\newblock A family of {$C^{1}$} finite elements with optimal approximation
  properties for various {G}alerkin methods for 2nd and 4th order problems.
\newblock {\em RAIRO Anal. Num\'er.}, 13(3):227--255, 1979.

\bibitem{FalNei13}
R.~S. Falk and M.~Neilan.
\newblock Stokes complexes and the construction of stable finite elements with
  pointwise mass conservation.
\newblock {\em SIAM J. Numer. Anal.}, 51(2):1308--1326, 2013.

\bibitem{God73}
R.~Godement.
\newblock {\em Topologie alg\'ebrique et th\'eorie des faisceaux}.
\newblock Hermann, Paris, 1973.
\newblock Troisi{\`e}me {\'e}dition revue et corrig{\'e}e, Publications de
  l'Institut de Math{\'e}matique de l'Universit{\'e} de Strasbourg, XIII,
  Actualit{\'e}s Scientifiques et Industrielles, No. 1252.

\bibitem{GuzNei14b}
J.~Guzm\'an and M.~Neilan.
\newblock Conforming and divergence-free {S}tokes elements in three dimensions.
\newblock {\em IMA J. Numer. Anal.}, 34(4):1489--1508, 2014.

\bibitem{GuzNei14a}
J.~Guzm\'an and M.~Neilan.
\newblock Conforming and divergence-free {S}tokes elements on general
  triangular meshes.
\newblock {\em Math. Comp.}, 83(285):15--36, 2014.

\bibitem{Hip99}
R.~Hiptmair.
\newblock Canonical construction of finite elements.
\newblock {\em Math. Comp.}, 68(228):1325--1346, 1999.

\bibitem{LaiSch07}
M.-J. Lai and L.~L. Schumaker.
\newblock {\em Spline functions on triangulations}, volume 110 of {\em
  Encyclopedia of Mathematics and its Applications}.
\newblock Cambridge University Press, Cambridge, 2007.

\bibitem{Lan99}
S.~Lang.
\newblock {\em Fundamentals of differential geometry}, volume 191 of {\em
  Graduate Texts in Mathematics}.
\newblock Springer-Verlag, New York, 1999.

\bibitem{Ned80}
J.-C. N{\'e}d{\'e}lec.
\newblock Mixed finite elements in {${\bf R}\sp{3}$}.
\newblock {\em Numer. Math.}, 35(3):315--341, 1980.

\bibitem{Ned82}
J.-C. N{\'e}d{\'e}lec.
\newblock \'{E}l\'ements finis mixtes incompressibles pour l'\'equation de
  {S}tokes dans {${\bf R}^{3}$}.
\newblock {\em Numer. Math.}, 39(1):97--112, 1982.

\bibitem{Nei15}
M.~Neilan.
\newblock Discrete and conforming smooth de {R}ham complexes in three
  dimensions.
\newblock {\em Math. Comp.}, 84(295):2059--2081, 2015.

\bibitem{Per76}
P.~Percell.
\newblock On cubic and quartic {C}lough-{T}ocher finite elements.
\newblock {\em SIAM J. Numer. Anal.}, 13(1):100--103, 1976.

\bibitem{Qin94}
J.~Qin.
\newblock {\em On the convergence of some low order mixed finite elements for
  incompressible fluids}.
\newblock PhD thesis, The Pennsylvania State University, 1994.

\bibitem{RavTho77}
P.-A. Raviart and J.~M. Thomas.
\newblock A mixed finite element method for 2nd order elliptic problems.
\newblock In {\em Mathematical aspects of finite element methods ({P}roc.
  {C}onf., {C}onsiglio {N}az. delle {R}icerche ({C}.{N}.{R}.), {R}ome, 1975)},
  pages 292--315. Lecture Notes in Math., Vol. 606. Springer, Berlin, 1977.

\bibitem{ScoVog85}
L.~R. Scott and M.~Vogelius.
\newblock Norm estimates for a maximal right inverse of the divergence operator
  in spaces of piecewise polynomials.
\newblock {\em RAIRO Mod\'el. Math. Anal. Num\'er.}, 19(1):111--143, 1985.

\bibitem{Ste84}
R.~Stenberg.
\newblock Analysis of mixed finite elements methods for the {S}tokes problem: a
  unified approach.
\newblock {\em Math. Comp.}, 42(165):9--23, 1984.

\bibitem{Wal14}
N.~J. Walkington.
\newblock A {$C^1$} tetrahedral finite element without edge degrees of freedom.
\newblock {\em SIAM J. Numer. Anal.}, 52(1):330--342, 2014.

\bibitem{WorFar87}
A.~J. Worsey and G.~Farin.
\newblock An {$n$}-dimensional {C}lough-{T}ocher interpolant.
\newblock {\em Constr. Approx.}, 3(2):99--110, 1987.

\bibitem{WorPip88}
A.~J. Worsey and B.~Piper.
\newblock A trivariate {P}owell-{S}abin interpolant.
\newblock {\em Comput. Aided Geom. Design}, 5(3):177--186, 1988.

\bibitem{Zha05}
S.~Zhang.
\newblock A new family of stable mixed finite elements for the 3d {S}tokes
  equations.
\newblock {\em Mathematics of computation}, 74(250):543--554, 2005.

\bibitem{Zha08}
S.~Zhang.
\newblock On the {P}1 {P}owell-{S}abin divergence-free finite element for the
  {S}tokes equations.
\newblock {\em Journal of Computational Mathematics}, pages 456--470, 2008.

\bibitem{Zha11}
S.~Zhang.
\newblock Quadratic divergence-free finite elements on {P}owell--{S}abin
  tetrahedral grids.
\newblock {\em Calcolo}, 48(3):211--244, 2011.

\end{thebibliography}
\bibliographystyle{plain}
\end{document}